\documentclass[10pt]{amsart}
      \usepackage[mathscr]{eucal}
      \usepackage{amsmath,amsfonts}
      %

             \hoffset -1.3cm
      \voffset -1cm
\textwidth 16truecm
      \textheight 22.5truecm

      \parskip\smallskipamount
          \newtheorem{theorem}{Theorem}[section]
      
      \newtheorem{proposition}[theorem]{Proposition}
      \newtheorem{corollary}[theorem]{Corollary}
      \newtheorem{lemma}[theorem]{Lemma}

      \makeatletter
      \@addtoreset{equation}{section}
      \makeatother

      \newcommand{\CC}{{\mathbb C}}
      \newcommand{\NN}{{\mathbb N}}
      
      \newcommand{\ZZ}{{\mathbb Z}}
      \newcommand{\DD}{{\mathbb D}}
      
      \newcommand{\FF}{{\mathbb F}}
      \newcommand{\TT}{{\mathbb T}}

 \newcommand{\GG}{{\mathbb G}}

\newcommand{\HH}{{\mathbb H}}
\newcommand{\KK}{{\mathbb K}}

      \newcommand{\cA}{{\mathcal A}}

      \newcommand{\cD}{{\mathcal D}}
      \newcommand{\cE}{{\mathcal E}}
      
      \newcommand{\cG}{{\mathcal G}}
      \newcommand{\cH}{{\mathcal H}}
      
      \newcommand{\cK}{{\mathcal K}}
      \newcommand{\cL}{{\mathcal L}}
      \newcommand{\cM}{{\mathcal M}}
       \newcommand{\cO}{{\mathcal O}}

      \newcommand{\cP}{{\mathcal P}}

      \newcommand{\supp}{\hbox{\rm{supp}}\,}

      \newdimen\expt
      \expt=.1ex
      \def\boxit#1{\setbox0\hbox{$\displaystyle{#1}$}
            \hbox{\lower.4\expt
       \hbox{\lower3\expt\hbox{\lower\dp0
            \hbox{\vbox{\hrule height.4\expt
       \hbox{\vrule width.4\expt\hskip3\expt
            \vbox{\vskip3\expt\box0\vskip2\expt}%
       \hskip3\expt\vrule width.4\expt}\hrule height.4\expt}}}}}}
      \begin{document}
       \pagestyle{myheadings}
      \markboth{ Gelu Popescu}{ Functional Calculus and Multi-Analytic Models on regular $\Lambda$-polyballs }

      \title [  Functional Calculus and Multi-Analytic Models  ]
      { Functional Calculus and Multi-Analytic Models on regular $\Lambda$-polyballs
      }
        \author{Gelu Popescu}
\date{January 28, 2020}
     \thanks{Research supported in part by  NSF grant DMS 1500922}
       \subjclass[2000]{Primary: 47A60;   47A48; 47A45,     Secondary:  47A56; 47B37; 46L65.
   }
      \keywords{Multivariable operator theory,    $\Lambda$-polyball,   Noncommutative Hardy space, Functional calculus,  Characteristic function, Multi-analytic model}
      \address{Department of Mathematics, The University of Texas
      at San Antonio \\ San Antonio, TX 78249, USA}
      \email{\tt gelu.popescu@utsa.edu}

\begin{abstract}
In a recent paper, we introduced the standard $k$-tuple ${\bf S}:=({\bf S}_1,\ldots, {\bf S}_k)$ of pure row isometries ${\bf S}_i:=[S_{i,1}\cdots  S_{i,n_i}]$ acting on the Hilbert space $\ell^2(\FF_{n_1}^+\times \cdots \times \FF_{n_k}^+)$,  where $\FF_n^+$ is the unital free semigroup with $n$ generators,  and showed that ${\bf S}$ is the universal  $k$-tuple of doubly $\Lambda$-commuting row isometries,  i.e.
$$ S_{i,s}^* S_{j,t}=\overline{\lambda_{ij}(s,t)}S_{j,t}S_{i,s}^*
$$
for every $i,j\in \{1,\ldots, k\}$ with $i\neq j$ and every  $s\in \{1,\ldots, n_i\}$, $t\in \{1,\ldots, n_j\}$,
where 
$\Lambda_{ij}:=[\lambda_{i,j}(s,t)]$ is an $n_i\times n_j$-matrix  with the entries in   $\TT:=\{z\in \CC: \  |z|=1\}$ and   $\Lambda_{j,i}=\Lambda_{i,j}^*$. It was also proved that  the set of all $k$-tuples $T:=(T_1,\ldots, T_k)$ of row operators $T_i:=[T_{i,1}\cdots T_{i,n_i}]$ acting on a Hilbert space $\cH$ which admit   ${\bf S}$ as universal model, i.e.
 there is a Hilbert space $\cD$  such that  $\cH$ is jointly co-invariant for  all operators ${S}_{i,s}\otimes I_\cD$ and
 $$
 T_{i,s}^*=(S^*_{i,s}\otimes I_\cD)|_\cH,\quad i\in \{1,\ldots, k\} \ \text{ and } \  s\in \{1,\ldots, n_i\},
 $$
   consists of the pure elements of  a set ${\bf B}_\Lambda(\cH)$ which was called  the  regular $\Lambda$-polyball. 
  
  The goal of the present paper is to introduce  and study noncommutative Hardy spaces associated with the regular $\Lambda$-polyball,  to  develop a  functional calculus on noncommutative Hardy spaces   for the completely non-coisometric  (c.n.c.) $k$-tuples in ${\bf B}_\Lambda(\cH)$,  and to study the characteristic functions and the associated multi-analytic models  for the  c.n.c.  elements in the regular $\Lambda$-polyball.  In addition, we show that the characteristic function is a complete unitary invariant for the class of c.n.c. $k$-tuples in ${\bf B}_\Lambda(\cH)$.  These results extend the   corresponding classical results  of Sz.-Nagy--Foia\c s for contractions  and  the noncommutative versions  for row contractions.
  In the particular case when $n_1=\cdots=n_k=1$ and $\Lambda_{ij}=1$, we obtain a functional calculus and operator model theory in terms of characteristic functions for $k$-tuples of contractions satisfying Brehmer condition.
\end{abstract}

      \maketitle

\bigskip

\section*{Contents}
{\it

\quad Introduction

\begin{enumerate}
\item[1.] Preliminaries on  regular $\Lambda$-polyballs  and noncommutative Berezin transforms
   \item[2.]   Noncommutative Hardy spaces associated with regular $\Lambda$-polyballs
   \item[3.]  Functional calculus
   \item[4.] Free holomorphic functions on regular $\Lambda$-polyballs
    \item[5.]   Characteristic functions and multi-analytic  models
   \end{enumerate}

\quad References

}

\bigskip

\section*{Introduction}

In a recent paper \cite{Po-twisted}, inspired by  the work of De Jeu and Pinto \cite{DJP}, and J.~Sarkar \cite{S}, we studied the structure of  the $k$-tuples of doubly $\Lambda$-commuting row isometries and the $C^*$-algebras they generate from the point of view of   noncommutative multivariable operator theory.  

Given   row isometries $V_i:=[V_{i,1}\cdots V_{i,n_i}]$, $i\in \{1,\ldots, k\}$, we say that  $V:=(V_1,\ldots, V_k)$  is a  $k$-tuple of {\it doubly $\Lambda$-commuting row isometries}  if
$$ V_{i,s}^* V_{j,t}=\overline{\lambda_{ij}(s,t)}V_{j,t}V_{i,s}^*
$$
for every $i,j\in \{1,\ldots, k\}$ with $i\neq j$ and every $s\in \{1,\ldots, n_i\}$, $t\in \{1,\ldots, n_j\}$,
where
$\Lambda_{ij}:=[\lambda_{i,j}(s,t)]$ is an $n_i\times n_j$-matrix  with the entries in the torus $\TT:=\{z\in \CC: \  |z|=1\}$ and   $\Lambda_{j,i}=\Lambda_{i,j}^*$.

We obtained  Wold decompositions  and used them to  classify  the $k$-tuples of doubly $\Lambda$-commuting row isometries  up to a unitary equivalence. We proved that
there is a one-to-one correspondence  between the unitary equivalence classes of $k$-tuples of doubly $\Lambda$-commuting row isometries and the enumerations of  $2^k$ unitary equivalence classes of unital representations of the  twisted $\Lambda$-tensor algebras $\otimes_{i\in A^c}^{\Lambda}\cO_{n_i}$,  as $A$ is any subset of $\{1,\ldots, k\}$, where $\cO_{n_i}$ is the Cuntz algebra with $n_i$ generators (see \cite{Cu}).  The algebra  $\otimes_{i\in A^c}^\Lambda\cO_{n_i}$ can be seen  as a twisted tensor product of Cuntz  algebras. We remark that, when $n_1=\cdots =n_k=1$, the corresponding algebras  are   higher-dimensional
noncommutative tori  which are  studied in  noncommutative differential geometry (see  \cite{R}, \cite{Con}, \cite{Dav},  and the  appropriate references there in). We should mention that  $C^*$-algebras generated by isometries with twisted commutation relations have been
studied in the literature  in various particular cases (see \cite{JPS},  \cite{Pr},      \cite{K}, and \cite{W}).

We introduced  in \cite{Po-twisted} the  standard $k$-tuple  ${\bf S}:=({\bf S}_1,\ldots, {\bf S}_k)$ of  doubly
$\Lambda$-commuting   pure row isometries  ${\bf S}_i:=[S_{i,1}\cdots  S_{i,n_i}]$ acting on the Hilbert space $\ell^2(\FF_{n_1}^+\times \cdots \times \FF_{n_k}^+)$,  where $\FF_n^+$ is the unital free semigroup with $n$ generators,  and
 proved that  the universal  $C^*$-algebra generated by   a $k$-tuple of  doubly
$\Lambda$-commuting  row isometries is $*$-isomorphic to the $C^*$-algebra $C^*(\{S_{i,s}\})$.
The regular $\Lambda$-polyball ${\bf B}_\Lambda(\cH)$  was introduced as the set of all $k$-tuples of
   row contractions  $T_i=[T_{i,1}\ldots  T_{i,n_i}]$, i.e. $T_{i,1}T_{i,1}^*+\cdots +T_{i,n_i}T_{i,n_i}^*\leq I$,  such that
 \begin{equation*}
 T_{i,s} T_{j,t}=\lambda_{ij}(s,t)T_{j,t}T_{i,s}
\end{equation*}
for every $i,j\in \{1,\ldots, k\}$ with $i\neq j$ and every $s\in \{1,\ldots, n_i\}$, $t\in \{1,\ldots, n_j\}$,
and such that
$$
\Delta_{rT}(I):=
(id-\Phi_{rT_k})\circ\cdots \circ (id-\Phi_{rT_1})(I)\geq 0,\qquad r\in [0,1),
$$
where $\Phi_{rT_i}:B(\cH)\to B(\cH)$  is the completely positive linear map defined by $\Phi_{rT_i}(X):=\sum_{s=1}^{n_i} r^2T_{i,s}XT_{i,s}^*$.
  We proved that
a $k$-tuple  $T:=(T_1,\ldots, T_k)$ of row operators $T_i:=[T_{i,1}\ldots  T_{i,n_i}]$, acting on  a Hilbert space  $\cH$, admits ${\bf S}$ as universal model, i.e.
 there is a Hilbert space $\cD$  such that  $\cH$ is jointly co-invariant for  ${S}_{i,s}\otimes I_\cD$ and
 $$
 T_{i,s}^*=(S^*_{i,s}\otimes I_\cD)|_\cH,\quad i\in \{1,\ldots, k\} \ \text{ and } \  s\in \{1,\ldots, n_i\},
 $$
   if and only if  $T$ is a pure element of
${\bf B}_\Lambda(\cH)$. 

The goal of the present paper is to  continue the work in \cite{Po-twisted}  and  develop a multivariable functional calculus  for $k$-tuples of $\Lambda$-commuting row contractions   on noncommutative Hardy spaces associated with regular $\Lambda$-polyballs.  We  also study  the   characteristic functions   and  the associated multi-analytic models  for the elements of ${\bf B}_\Lambda(\cH)$.
Many of the techniques developed in  \cite{Po-twisted} and  \cite{Po-Berezin1} are refined and used in the present paper.

In Section 1, we present some preliminaries on noncommutative Berezin transforms associated with $\Lambda$-polyballs which are very useful in the  next sections. 
In Section 2, we introduce the noncommutative Hardy algebra $F^\infty({\bf B}_\Lambda)$ which can be seen as a noncommutative multivariable version of the Hardy algebra $H^\infty(\DD)$.
We prove that  $F^\infty ({\bf B}_\Lambda)$ is   WOT- (resp. SOT-, w*-) closed
  and
$$
F^\infty({\bf B}_\Lambda)=\overline{\cP(\{S_{i,s}\})}^{\text{\rm SOT}}= \overline{\cP(\{S_{i,s}\})}^{\text{\rm WOT}}=\overline{\cP(\{S_{i,s}\})}^{\text{\rm w*}},
$$
where $\cP(\{S_{i,s}\})$ is  the algebra of all polynomials in $S_{i,s}$ and the identity.
Moreover, we show that $F^\infty({\bf B}_\Lambda)$ is the sequential SOT-(resp. WOT-, w*-) closure of  $\cP(\{S_{i,s}\})$. Using noncommutative Berezin transforms associated with 
$\Lambda$-polyballs,  we prove that each element 
$A\in F^\infty({\bf B}_\Lambda)$ has a unique  formal Fourier representation
$$
\varphi(\{S_{i,s}\})=\sum_{(\beta_1,\ldots, \beta_k)\in  \FF_{n_1}^+\times\cdots \times \FF_{n_k}^+} c_{(\beta_1,\ldots, \beta_k)} S_{1,\beta_1}\ldots S_{k,\beta_k}
$$
such that,  for all $r\in [0,1)$,
  $\varphi(\{rS_{i,s}\})$ is in the $\Lambda$-polyball  algebra $\cA({\bf B}_\Lambda)$,
 the normed closed non-self-adjoint algebra generated by the isometries $S_{i,s}$ and the identity. Moreover, we prove that
   $$
A=\text{\rm SOT-}\lim_{r\to1}\varphi(\{rS_{i,s}\})
$$
and
$$
\|A\|=\sup_{0\leq r<1}\|\varphi(\{rS_{i,s}\})\|=\lim_{r\to 1}\|\varphi(\{rS_{i,s}\})\|.
$$

In Section 3, we prove the existence of an $F^\infty({\bf B}_\Lambda)$-functional calculus  for the completely non-coisometric (c.n.c)  elements  $T$ in the $\Lambda$-polyball ${\bf B}_\Lambda$ which extends the Sz.-Nagy--Foias functional calculus  for c.n.c. contractions \cite{SzFBK-book}  and the functional calculus for c.n.c row contractions \cite{Po-funct}. In this case, we  prove that if  $\varphi(\{S_{i,s}\})$ is the Fourier representation of $A\in F^\infty({\bf B}_\Lambda)$, then
$$
\Psi_T(A):=\text{\rm SOT-}\lim_{r\to 1} \varphi(\{rT_{i,s}\})
$$
exists and defines  a unital completely contractive homomorphism
 $\Psi_T: F^\infty({\bf B}_\Lambda)\to B(\cH)$ which  is WOT-(resp. SOT-, w*-) continuous on bounded sets.  
 
 Section 4 is dedicated to  the set  $Hol( {\bf B}_\Lambda^\circ )$ of free holomorphic functions on the open $\Lambda$-polyball ${\bf B}_\Lambda^\circ(\cH)$, which is the interior of ${\bf B}_\Lambda(\cH)$.
 We introduce the algebra  $H^\infty({\bf B}^\circ_\Lambda)$ of all $\varphi\in Hol( {\bf B}_\Lambda^\circ )$
such that
$$
\|\varphi\|_\infty:=\sup\|\varphi(\{X_{i,s}\})\|<\infty,
$$
where the supremum is taken over all $\{X_{i,s}\}\in {\bf B}_\Lambda^\circ(\cH)$ and any Hilbert space.    $H^\infty( {\bf B}_\Lambda^\circ )$ is a Banach algebra under pointwise multiplication and the norm $\|\cdot\|_\infty$ and has an  operator space structure in the sense of Ruan
 \cite{P-book}.  Using noncommutative Berezin transforms, we show that  the algebra of  bounded  free holomorphic functions 
  $H^\infty( {\bf B}_\Lambda^\circ )$ is  completely isometric isomorphic to the noncommutative Hardy algebra  $F^\infty({\bf B}_\Lambda)$ introduced in Section 2.
We also introduce  the algebra  $A( {\bf B}_\Lambda^\circ)$  of all functions $f\in  Hol( {\bf B}_\Lambda^\circ )$ such that the map
${\bf B}_\Lambda^\circ (\cH)\ni X\mapsto f(X)\in B(\cH)$
has a continuous extension to ${\bf B}_\Lambda(\cH)$ for any Hilbert space $\cH$.
It turns out  that $A( {\bf B}_\Lambda^\circ)$ is a Banach algebra with pointwise multiplication and the norm $\|\cdot\|_\infty$ and has an operator space structure.   We conclude this section by showing that
 $ A( {\bf B}_\Lambda^\circ)$ is completely isometric isomorphic to the noncommutative  
 $\Lambda$-polyball algebra  $\cA({\bf B}_\Lambda)$.  
 
 In Section 5, we show that  a $k$-tuple  ${T}=({ T}_1,\ldots, { T}_k)$  in  the noncommutative $\Lambda$-polyball ${\bf B}_\Lambda(\cH)$  admits a characteristic function if and only if
$$
{\Delta}_{{\bf S}\otimes I}(I -{ K_{T}}{ K_{T}^*})\geq 0,
$$
  where ${K_{T}}$ is the noncommutative Berezin kernel  associated with ${T}$ and
  $${\Delta}_{{\bf S}\otimes I}:=(id-\Phi_{{\bf S}_1\otimes I})\circ\cdots \circ (id-\Phi_{{\bf S}_k\otimes I}).
  $$
We provide a model theorem for the class of    completely non-coisometric $k$-tuple  of operators  in  
${\bf B}_\Lambda(\cH)$ which  admit characteristic functions, and     show  that the  characteristic function
   is a complete unitary invariant for this class of $k$-tuples.    These are generalizations of the corresponding classical results \cite{SzFBK-book} and of the noncommutative versions obtained in \cite{Po-charact}.
   
We remark that in the particular case when $n_1=\cdots=n_k=1$ and $\Lambda_{ij}=1$, we obtain a functional calculus and operator model theory   for $k$-tuples of contractions satisfying Brehmer condition \cite{Br} (see also \cite{SzFBK-book}).

\bigskip

\section{Preliminaries on  regular $\Lambda$-polyballs  and noncommutative Berezin transforms}

In this section,  we introduce the  standard $k$-tuple  ${\bf S}:=({\bf S}_1,\ldots, {\bf S}_k)$ of  doubly
$\Lambda$-commuting   pure row isometries  ${\bf S}_i:=[S_{i,1}\cdots  S_{i,n_i}]$ and  present some preliminaries results  on noncommutative Berezin transforms associated with $\Lambda$-polyballs.

For each $i,j\in \{1,\ldots, k\}$ with $i\neq j$, let $\Lambda_{ij}:=[\lambda_{i,j}(s,t)]$, where $s\in \{1,\ldots, n_i\}$ and $t\in \{1,\ldots, n_j\}$ be an $n_i\times n_j$-matrix  with the entries in the torus $\TT:=\{z\in \CC: \  |z|=1\}$, and assume that $\Lambda_{j,i}=\Lambda_{i,j}^*$. Given   row isometries $V_i:=[V_{i,1}\cdots V_{i,n_i}]$, $i\in \{1,\ldots, k\}$, we say that  $V=(V_1,\ldots, V_k)$  is a the $k$-tuple of {\it doubly $\Lambda$-commuting row isometries}  if
$$ V_{i,s}^* V_{j,t}=\overline{\lambda_{ij}(s,t)}V_{j,t}V_{i,s}^*
$$
for every $i,j\in \{1,\ldots, k\}$ with $i\neq j$ and every $s\in \{1,\ldots, n_i\}$, $t\in \{1,\ldots, n_j\}$.
We remark that the relation above implies that
$$ V_{i,s} V_{j,t}=\lambda_{ij}(s,t)V_{j,t}V_{i,s}.
$$

For each $i\in \{1,\ldots, k\}$, let $\FF_{n_i}^+$ be the unital free semigroup with generators $g_1^i,\ldots, g_{n_i}^i$ and neutral element $g_0^i$. The length of  $\alpha\in \FF_{n_i}^+$ is defined by $|\alpha|=0$ if $\alpha=g_0^i$ and $|\alpha|=m$ if $\alpha=g_{p_1}^i\cdots g_{p_m}^i\in \FF_{n_i}^+$, where $p_1,\ldots, p_m\in \{1,\ldots, n_i\}$.
If $T_i:=[T_{i,1}\cdots T_{i,n_i}]$,  we  use the notation
$T_{i,\alpha}:=T_{i,p_1}\cdots T_{i,p_m}$ and  $T_{i,g_0^i}:=I$.

Consider the Hilbert space $\ell^2(\FF_{n_1}^+\times\cdots \times \FF_{n_k}^+)$ with the standard basis  $\{\chi_{(\alpha_1,\ldots, \alpha_k)}\}$, where
 $\alpha\in \FF_{n_1}^+,\ldots \alpha_k\in \FF_{n_k}^+$.
For each  $i\in \{1,\ldots, k\}$ and $s\in \{1,\ldots, n_i\}$, we define the row operator  ${\bf S}_i:=[ S_{i,1}\cdots S_{i,n_i}]$, where $S_{i,s}$ is defined on $\ell^2(\FF_{n_1}^+\times\cdots \times \FF_{n_k}^+)$ by setting
\begin{equation}
\label{shift}
\begin{split}
&{S}_{i,s}\left( \chi_{(\alpha_1,\ldots, \alpha_k)}\right)\\
&\qquad \qquad :=\begin{cases} \chi_{(g_s^i\alpha_1,\alpha_2,\ldots, \alpha_k)}, &\quad \text{ if } i=1\\
\boldsymbol{\lambda}_{i,1}(s,\alpha_1)\cdots \boldsymbol{\lambda}_{i,i-1}(s,\alpha_{i-1})
\chi_{(\alpha_1,\ldots, \alpha_{i-1}, g_s^i \alpha_i, \alpha_{i+1},\ldots, \alpha_k)},&\quad \text{ if } i\in \{2,\ldots,k\}
\end{cases}
\end{split}
\end{equation}
for all $\alpha_1\in \FF_{n_1}^+,\ldots, \alpha_{k}\in \FF_{n_{k}}^+$,
where, for each   $j\in \{1,\ldots, k\}$,
$$
\boldsymbol{\lambda}_{i,j}(s, \beta)
:= \begin{cases}\prod_{b=1}^q\lambda_{i,j}(s,j_b),&\quad  \text{ if }
 \beta=g_{j_1}^j\cdots g_{j_q}^j\in \FF_{n_j}^+  \\
 1,& \quad  \text{ if } \beta=g_0^j.
 \end{cases}
$$
Let $i\in \{1,\ldots, k\}$ and $s\in \{1,\ldots, n_i\}$ and note that relation \eqref{shift} implies
\begin{equation}
\label{shift*}
\begin{split}
&{S}_{i,s}^*\left( \chi_{(\alpha_1,\ldots, \alpha_k)}\right)\\
&\qquad \qquad =\begin{cases}
\overline{\boldsymbol{\lambda}_{i,1}(s,\alpha_1)}\cdots \overline{\boldsymbol{\lambda}_{i,i-1}(s,\alpha_{i-1})}
\chi_{(\alpha_1,\ldots, \alpha_{i-1}, \beta_i, \alpha_{i+1},\ldots, \alpha_k)},&\quad \text{ if } \alpha_i=g_s^i \beta_i\\
0, &\quad \text{ otherwise }
\end{cases}
\end{split}
\end{equation}
for any  $\alpha_1\in \FF_{n_1}^+,\ldots, \alpha_{k}\in \FF_{n_{k}}^+$. Hence, we deduce that
\begin{equation*}
\begin{split}
&\sum_{s=1}^{n_i} {S}_{i,s}{S}_{i,s}^*\left( \chi_{(\alpha_1,\ldots, \alpha_k)}\right)\\
&\qquad \qquad =\begin{cases}
|\boldsymbol{\lambda}_{i,1}(s,\alpha_1)|^2\cdots |\boldsymbol{\lambda}_{i,i-1}(s,\alpha_{i-1})|^2
\chi_{(\alpha_1,\ldots, \alpha_{i-1}, \alpha_i, \alpha_{i+1},\ldots, \alpha_k)},&\quad \text{ if } |\alpha_i|\geq 1\\
0, &\quad \text{ otherwise }
\end{cases}\\
&\qquad \qquad =\begin{cases}
\chi_{(\alpha_1,\ldots, \alpha_k)},&\quad \text{ if } |\alpha_i|\geq 1\\
0, &\quad \text{ otherwise,}
\end{cases}
\end{split}
\end{equation*}
which shows that $[{S}_{i,1}\cdots {S}_{i, n_i}]$ is a  row isometry for every $i\in \{1,\ldots, k\}$.
In \cite{Po-twisted}, we showed that,  if $i,j\in \{1,\ldots, k\}$ with $i\neq j$ and any $s\in \{1,\ldots, n_i\}$, $t\in \{1,\ldots, n_j\}$, then
\begin{equation}\label{lac-S} {S}_{i,s}^* {S}_{j,t}=\overline{\lambda_{i,j}(s,t)}{S}_{j,t}{S}_{i,s}^*.
\end{equation}
 Consequently, ${\bf S}:=({\bf S}_1,\ldots, {\bf S}_k)$ is a $k$-tuple of doubly $\Lambda$-commuting row isometries.

  Given  row contractions $T_i:=[T_{i,1}\cdots T_{i,n_i}]$, $i\in \{1,\ldots, k\}$,  acting on a Hilbert space $\cH$, we say that  $T=(T_1,\ldots, T_k)$  is a $k$-tuple of {\it  $\Lambda$-commuting row contractions}  if
\begin{equation}
\label{commuting}
 T_{i,s} T_{j,t}=\lambda_{ij}(s,t)T_{j,t}T_{i,s}
\end{equation}
for every $i,j\in \{1,\ldots, k\}$ with $i\neq j$ and every
 $s\in \{1,\ldots, n_i\}$, $t\in \{1,\ldots, n_j\}$.
    We say that $T$ is in the {\it regular $\Lambda$-polyball}, which we denote by ${\bf B}_\Lambda(\cH)$, if T is  a $\Lambda$-commuting  tuple and
$$
\Delta_{rT}(I):=
(id-\Phi_{rT_k})\circ\cdots \circ (id-\Phi_{rT_1})(I)\geq 0,\qquad r\in [0,1),
$$
where $\Phi_{rT_i}:B(\cH)\to B(\cH)$  is the completely positive linear map defined by $\Phi_{rT_i}(X):=\sum_{s=1}^{n_i} r^2T_{i,s}XT_{i,s}^*$.
We remark that, due to the $\Lambda$-commutation relation \eqref{commuting}, we have  $\Phi_{T_i}\circ \Phi_{T_j}(X)=\Phi_{T_j}\circ \Phi_{T_i}(X)$ for any $i,j\in \{1,\ldots,k\}$ and $X\in B(\cH)$.

Let $T=(T_1,\ldots, T_k)$ be  a $k$-tuple in the {\it regular $\Lambda$-polyball} ${\bf B}_\Lambda(\cH)$. We define the noncommutative Berezin kernel
$$
K_{T}:\cH\to \ell^2(\FF_{n_1}^+\times\cdots \times \FF_{n_k}^+)\otimes \cD(T),
$$
 by setting
$$
K_{T}h :=\sum_{\beta_1\in \FF_{n_1}^+,\ldots, \beta_k\in \FF_{n_k}^+}
\chi_{(\beta_1,\ldots, \beta_k)}\otimes   \Delta_{T}(I)^{1/2}T_{k,\beta_k}^*\cdots T_{1,\beta_1}^*h,\qquad h\in \cD(T),
$$
where $ \Delta_{T}(I):=(id-\Phi_{T_k})\circ\cdots \circ (id-\Phi_{T_1})(I)$ and $\cD(T):=\overline{ \Delta_{T}(I)\cH}$.

The  first theorem is an extension of the corresponding result from  \cite{Po-twisted} for pure $k$-tuples in ${\bf B}_\Lambda(\cH)$.
\begin{theorem}  \label{Berezin} Let $T=(T_1,\ldots, T_k)$ be  a  $k$-tuple in the regular $\Lambda$-polyball ${\bf B}_\Lambda(\cH)$.  Then the following statements hold.
\begin{enumerate}
\item[(i)] The noncommutative Berezin kernel $K_T$ is a contraction and
$$
K_T^*K_T=\lim_{p_k\to\infty}\ldots \lim_{p_1\to\infty}(id-\Phi_{T_k}^{p_k})\circ\cdots \circ (id-\Phi_{T_1}^{p_1})(I),
$$
where the limits are in the weak operator theory.
\item[(ii)] For every $i\in \{1,\ldots, k\}$ and $s\in \{1,\ldots, n_i\}$,
$$
K_T T_{i,s}^*=\left(S_{i,s}^*\otimes I_{\cD(T)}\right) K_T.
$$

\end{enumerate}

\end{theorem}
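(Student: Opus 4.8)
The plan is to reduce both parts to two ingredients: a one-variable telescoping identity for the completely positive maps $\Phi_{T_i}$, and the doubly $\Lambda$-commuting relations expressed on the operator words $T_{k,\beta_k}^*\cdots T_{1,\beta_1}^*$. For part (i), I would first compute, for $h\in\cH$,
\[
\|K_T h\|^2 = \sum_{\beta_1,\ldots,\beta_k}\langle T_{1,\beta_1}\cdots T_{k,\beta_k}\,\Delta_T(I)\,T_{k,\beta_k}^*\cdots T_{1,\beta_1}^* h,\, h\rangle,
\]
so that $\langle K_T^*K_T h,h\rangle$ is the limit of the increasing net of partial sums obtained by restricting each $\beta_i$ to $|\beta_i|\le P_i$. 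Writing $\Phi_{T_i}^{p}(X)=\sum_{|\beta_i|=p}T_{i,\beta_i}XT_{i,\beta_i}^*$, the truncated sum over $\beta_i$ is $\sum_{p=0}^{P_i-1}\Phi_{T_i}^p$, which obeys $\bigl(\sum_{p=0}^{P_i-1}\Phi_{T_i}^p\bigr)\circ(id-\Phi_{T_i})=id-\Phi_{T_i}^{P_i}$. Since the $\Phi_{T_i}$ pairwise commute (a consequence of \eqref{commuting}, as already noted in the text), applying the truncated sum to $\Delta_T(I)=(id-\Phi_{T_k})\circ\cdots\circ(id-\Phi_{T_1})(I)$ lets one pair each factor and collapse the whole expression to $(id-\Phi_{T_k}^{P_k})\circ\cdots\circ(id-\Phi_{T_1}^{P_1})(I)$.

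The remainder of (i) is order-theoretic. Each $\Phi_{T_i}$ is positive with $\Phi_{T_i}(I)=\sum_s T_{i,s}T_{i,s}^*\le I$ since $T_i$ is a row contraction, so $0\le\Phi_{T_i}^{P_i}(I)\le I$; an induction on the number of factors, using that $A\mapsto A-\Phi_{T_j}^{P_j}(A)$ maps the order interval $[0,I]$ into itself, gives $0\le(id-\Phi_{T_k}^{P_k})\circ\cdots\circ(id-\Phi_{T_1}^{P_1})(I)\le I$. (Positivity is also visible directly, the partial sums being sums of operators $W\Delta_T(I)W^*$ with $\Delta_T(I)\ge0$, the latter obtained by letting $r\to1$ in the polyball condition $\Delta_{rT}(I)\ge0$.) Passing to the weak-operator limit of this monotone bounded net yields simultaneously $K_T^*K_T\le I$, i.e. $K_T$ is a contraction, and the stated formula; monotonicity and boundedness guarantee that the joint limit coincides with the iterated limit $\lim_{p_k}\cdots\lim_{p_1}$ appearing in the statement.

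For part (ii) I would compute both sides coordinate-wise against the basis $\{\chi_{(\beta_1,\ldots,\beta_k)}\}$. Applying $S_{i,s}^*\otimes I_{\cD(T)}$ to $K_T h$ and using \eqref{shift*}, only the terms with $\beta_i=g_s^i\gamma_i$ survive, each carrying the scalar $\overline{\boldsymbol\lambda_{i,1}(s,\beta_1)}\cdots\overline{\boldsymbol\lambda_{i,i-1}(s,\beta_{i-1})}$ and the word $T_{k,\beta_k}^*\cdots T_{i,\gamma_i}^*\,T_{i,s}^*\cdots T_{1,\beta_1}^*$. Meanwhile the $(\beta_1,\ldots,\gamma_i,\ldots,\beta_k)$-coordinate of $K_T T_{i,s}^* h$ carries $T_{k,\beta_k}^*\cdots T_{i,\gamma_i}^*\cdots T_{1,\beta_1}^*\,T_{i,s}^*$, so the identity follows once $T_{i,s}^*$ is swept from the far right into the $i$-th slot. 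The key computation is that, for $j<i$, iterating the adjoint of \eqref{commuting} letter by letter gives $T_{j,\beta_j}^* T_{i,s}^*=\overline{\boldsymbol\lambda_{i,j}(s,\beta_j)}\,T_{i,s}^* T_{j,\beta_j}^*$; sweeping $T_{i,s}^*$ past the blocks $j=i-1,\ldots,1$ produces exactly $\prod_{j<i}\overline{\boldsymbol\lambda_{i,j}(s,\beta_j)}$, and then $T_{i,\gamma_i}^* T_{i,s}^*=T_{i,g_s^i\gamma_i}^*$ matches the reindexed term on the other side.

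I expect the only real obstacle to lie in part (i): justifying that the formal rearrangement of the multiply-indexed series is legitimate. This is precisely why I route the argument through the finite truncations $\sum_{p=0}^{P_i-1}\Phi_{T_i}^p$, so that commutativity is invoked only on genuine finite compositions, and then appeal to monotone convergence of positive operators bounded by $I$ to pass to the limit and identify it with $K_T^*K_T$. Part (ii), by contrast, is a bookkeeping exercise whose entire content is the scalar accounting above.
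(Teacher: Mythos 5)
Your part (ii) is correct and is, in substance, the paper's own proof: both compute matrix entries against the vectors $\chi_{(\alpha_1,\ldots,\alpha_k)}\otimes h'$, use \eqref{shift*}, and sweep $T_{i,s}$ (equivalently $T_{i,s}^*$) through the blocks $j<i$, producing exactly the scalar $\overline{\boldsymbol\lambda_{i,1}(s,\alpha_1)}\cdots\overline{\boldsymbol\lambda_{i,i-1}(s,\alpha_{i-1})}$ that the action of $S_{i,s}^*$ also produces. Part (i) likewise follows the paper's general strategy (telescoping, commutativity of the $\Phi_{T_i}$, positivity), but with a different organization: you collapse finite box truncations to $(id-\Phi_{T_k}^{P_k})\circ\cdots\circ(id-\Phi_{T_1}^{P_1})(I)$ and then pass to a monotone limit, whereas the paper sums each index to infinity in turn, using the WOT-limits $A_i=\lim_{q}\Phi_{T_i}^{q+1}(I)$ to identify $K_T^*K_T=(I-A_k)\cdots(I-A_1)$, and only then rearranges the positive terms.

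There is, however, a genuine gap in your part (i), precisely at the boundedness step. The lemma you invoke --- that $A\mapsto A-\Phi_{T_j}^{P_j}(A)$ maps the order interval $[0,I]$ into itself --- is false: for the single contraction $T=\begin{pmatrix}0&1\\0&0\end{pmatrix}$ and the projection $A=\begin{pmatrix}0&0\\0&1\end{pmatrix}$ one has $A-TAT^*=\begin{pmatrix}-1&0\\0&1\end{pmatrix}\not\ge 0$. What your induction actually needs is positivity of each \emph{intermediate} composition $D_j:=(id-\Phi_{T_j}^{P_j})\circ\cdots\circ(id-\Phi_{T_1}^{P_1})(I)$: once $D_{j-1}\ge 0$ is known, $D_j=D_{j-1}-\Phi_{T_j}^{P_j}(D_{j-1})\le D_{j-1}$, whence $D_k\le\cdots\le D_1\le I$. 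Your parenthetical remark does not supply this, because writing $D_j$ as a sum of operators $W\,\Delta_{(T_j,\ldots,T_1)}(I)\,W^*$ requires positivity of the \emph{partial} defect $(id-\Phi_{T_j})\circ\cdots\circ(id-\Phi_{T_1})(I)$, while the definition of ${\bf B}_\Lambda(\cH)$ only postulates positivity of the full defect $\Delta_{rT}(I)$; your argument covers only $j=k$. Without the bound $D_k\le I$, your increasing net of partial sums is not known to be bounded, so neither the WOT-convergence you appeal to nor the statement that $K_T$ is a contraction follows --- and that bound is the whole content of the contraction claim. The gap is repairable: for $r\in[0,1)$ one has $\Phi_{rT_i}(I)\le r^2 I$, so the full series for $rT$ converges in norm and telescopes exactly to $I$; consequently every box sum for $rT$ is $\le I$, and letting $r\to 1$ in the finite box sum gives $D_k\le I$. (Alternatively, first deduce positivity of all partial defects by summing out the remaining indices for $rT$ and letting $r\to 1$, then run your chain of inequalities.) With that step replaced, your argument for (i) is sound and matches the statement's iterated limit directly.
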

\begin{proof}
For each $i\in \{1,\ldots, k\}$, we set
$$\Delta_{(T_i, T_{i-1},\ldots, T_1)}(I):=(id-\Phi_{T_{i}})\circ\cdots \circ (id-\Phi_{T_1})(I)
$$
and remark that, due to the fact that $T_i$ is a row contraction,  $A_i:=\lim_{q_i\to \infty}\Phi_{T_i}^{q_i+1}(I)$ exists   in the weak operator theory.
Using the fact that  $\Phi_{T_i}\circ \Phi_{T_j}(X)=\Phi_{T_j}\circ \Phi_{T_i}(X)$ for all $i,j\in \{1,\ldots,k\}$ and $X\in B(\cH)$, we deduce that
\begin{equation*}
\begin{split}
\sum_{q_k=0}^\infty \Phi_{T_k}^{q_k} [\Delta_{(T_k,\ldots, T_1)}(I)]
& =
\lim_{p_k\to \infty} \sum_{q_k=0}^{p_k}\left\{ \Phi_{T_k}^{q_k}
[\Delta_{(T_{k-1},\ldots, T_1)}(I)]
-\Phi_{T_k}^{q_k+1}[\Delta_{(T_{k-1},\ldots, T_1)}(I)]\right\}\\
&= \Delta_{(T_{k-1},\ldots, T_1)}(I)
-\lim_{p_k\to \infty} \Phi_{T_k}^{p_k+1}[\Delta_{(T_{k-1},\ldots, T_1)}(I))]\\
&= \Delta_{(T_{k-1},\ldots, T_1)}(I)
-\Delta_{(T_{k-1},\ldots, T_1)}\left(\lim_{p_k\to \infty} \Phi_{T_k}^{p_k+1}(I)\right)\\
&=
\Delta_{(T_{k-1},\ldots, T_1)}(I-A_k).
\end{split}
\end{equation*}

Consequently, we deduce that
\begin{equation*}
\begin{split}
&\sum_{q_{k-1}=0}^\infty \Phi_{T_{k-1}}^{q_{k-1}}\left(\sum_{q_{k}=0}^\infty \Phi_{T_k}^{q_k} [\Delta_{(T_k,\ldots, T_1)}(I)] \right)\\
&\qquad=\sum_{q_{k-1}=0}^\infty \Phi_{T_{k-1}}^{q_{k-1}}\left(\Delta_{(T_{k-1},\ldots, T_1)}(I-A_k)\right)\\
&\qquad =
\lim_{p_{k-1}\to \infty} \sum_{q_{k-1}=0}^{p_{k-1}}\left\{ \Phi_{T_{k-1}}^{q_{k-1}}
[\Delta_{(T_{k-2},\ldots, T_1)}(I-A_k)]
-\Phi_{T_{k-1}}^{q_{k-1}+1}[\Delta_{(T_{k-2},\ldots, T_1)}(I-A_k)]\right\}\\
&\qquad= \Delta_{(T_{k-2},\ldots, T_1)}(I-A_k)
-\lim_{p_{k-1}\to \infty} \Phi_{T_{k-1}}^{p_{k-1}+1}[\Delta_{(T_{k-2},\ldots, T_1)}(I-A_k)]\\
&\qquad = \Delta_{(T_{k-2},\ldots, T_1)}(I-A_k)
-\Delta_{(T_{k-2},\ldots, T_1)}\left((I-A_k)\lim_{p_{k-1}\to \infty} \Phi_{T_{k-1}}^{p_{k-1}+1}(I)\right)\\
&\qquad =
\Delta_{(T_{k-2},\ldots, T_1)}[(I-A_k)(I-A_{k-1})].
\end{split}
\end{equation*}
Continuing this process, we obtain
$$
\sum_{q_1=0}^\infty \Phi_{T_1}^{q_1}\left(\sum_{q_2=0}^\infty \Phi_{T_2}^{q_2}\left(\cdots \sum_{q_k=0}^\infty \Phi_{T_k}^{q_k} [\Delta_{(T_k,\ldots, T_1)}(I)]\cdots \right)\right)=(I-A_k)\cdots (I-A_1),
$$
where the convergence of the series is in the weak operator topology.
Since   we can rearrange the series  of positive terms,  we obtain
$$
\sum_{q_1,\ldots, q_k=0}^\infty \Phi_{T_1}^{q_1}\circ \cdots \circ \Phi_{T_k}^{q_k}[\Delta_{(T_k,\ldots, T_1)}(I)]=(I-A_k)\cdots (I-A_1).
$$
Using this relation, one can see that

\begin{equation*}
\begin{split}
\left<K_T^*K_Th,h\right>&=  \left<\sum_{\beta_1\in \FF_{n_1}^+,\ldots, \beta_k\in \FF_{n_k}^+}
   T_{1,\beta_1} \cdots T_{k,\beta_k} \Delta_{T}(I)T_{k,\beta_k}^*\cdots T_{1,\beta_1}^* h, h\right>\\
   &=\left<(I-A_k)\cdots (I-A_1)h,h\right>
\end{split}
\end{equation*}
for any $h\in \cH$, which proves item (i).

Now, we prove item (ii).
Note that, for every $h,h'\in \cH$,
\begin{equation*}
\begin{split}
\left<K_T T_{i,s}^*h,  \chi_{(\alpha_1,\ldots, \alpha_k)}\otimes h'\right>
&=\left< \sum_{\beta_1\in \FF_{n_1}^+,\ldots, \beta_k\in \FF_{n_k}^+}
\chi_{(\beta_1,\ldots, \beta_k)}\otimes   \Delta_{T}(I)^{1/2}T_{k,\beta_k}^*\cdots T_{1,\beta_1}^* T_{i,s}^*h,
 \chi_{(\alpha_1,\ldots, \alpha_k)}\otimes h'
\right>\\
&=\left<\Delta_{T}(I)^{1/2}T_{k,\alpha_k}^*\cdots T_{1,\alpha_1}^*T_{i,s}^* h,h'
\right>\\
&=
\left< h, T_{i,s} T_{1,\alpha_1}\cdots T_{i-1,\alpha_{i-1}}T_{i,\alpha_i}\cdots T_{k,\alpha_k}\Delta_{T}(I)^{1/2}h'\right>\\
&= \overline{\boldsymbol{\lambda}_{i,1}(s,\alpha_1)}\cdots \overline{\boldsymbol{\lambda}_{i,i-1}(s,\alpha_{i-1})}
\left<h,   T_{1,\alpha_1}\cdots T_{i-1,\alpha_{i-1}}T_{i,g_s^i\alpha_i} \cdots T_{k,\alpha_k}\Delta_{T}(I)^{1/2}h'\right>
\end{split}
\end{equation*}
for all $\alpha_1\in \FF_{n_1}^+,\ldots, \alpha_k\in \FF_{n_k}^+$ where, for all   $j\in \{1,\ldots, k\}$,
\begin{equation}\label{laij}
\boldsymbol{\lambda}_{i,j}(s, \beta)
:= \begin{cases}\prod_{b=1}^q\lambda_{i,j}(s,j_b)&\quad  \text{ if }
 \beta=g_{j_1}^j\cdots g_{j_q}^j\in \FF_{n_j}^+  \\
 1& \quad  \text{ if } \beta=g_0^j.
 \end{cases}
\end{equation}
   Due to the definition of the noncommutative Berezin kernel $K_T$ and using relation \eqref{shift*},  we obtain
\begin{equation*}
\begin{split}
&\left< \left(S_{i,s}^*\otimes I\right) K_T h, \chi_{(\alpha_1,\ldots, \alpha_k)}\otimes h'\right>\\
& =
\left< S_{i,s}^*(\chi_{(\alpha_1,\ldots, \alpha_{i-1}, g_s^i \alpha_i, \alpha_{i+1},\ldots,  \alpha_k)})
\otimes \Delta_{T}(I)^{1/2} T_{k,\alpha_k}^*\cdots T_{i+1,\alpha_{i+1}} ^*T_{i, g_s^i\alpha_i}^*T_{i-1,\alpha_{i-1}}^*\cdots T_{1,\alpha_1}^*h, \chi_{(\alpha_1,\ldots, \alpha_k)}\otimes h'\right>\\
&=
\overline{\boldsymbol{\lambda}_{i,1}(s,\alpha_1)}\cdots \overline{\boldsymbol{\lambda}_{i,i-1}(s,\alpha_{i-1})}
\left<h,   T_{1,\alpha_1}\cdots T_{i-1,\alpha_{i-1}}T_{i,g_s^i\alpha_i} \cdots T_{k,\alpha_k}\Delta_{T}(I)^{1/2}h'\right>.
\end{split}
\end{equation*}
Consequently, we obtain
$$
\left< \left(S_{i,s}^*\otimes I\right) K_T h, \chi_{(\alpha_1,\ldots, \alpha_k)}\otimes h'\right>
=
\left<h,   T_{1,\alpha_1}\cdots T_{i-1,\alpha_{i-1}}T_{i,g_s^i\alpha_i} \cdots T_{k,\alpha_k}\Delta_{T}(I)^{1/2}h'\right>
$$
and conclude that  item (ii) holds. The proof is complete.
\end{proof}

 Note that due to the doubly  $\Lambda$-commutativity  relations  \eqref{lac-S} satisfied by the standard shift ${\bf S}=({\bf S}_1,\ldots, {\bf S}_n)$  and the fact that $S_{i,s}^* S_{i,t}=\delta_{st}I$ for every $i\in \{1,\ldots, k\}$ and $s,t\in \{1,\ldots, n_i\}$,  and every polynomial in
$\{S_{i,s}\}$ and $ \{S_{i,s}^*\}$ is a finite sum the form
$$
p(\{S_{i,s}\}, \{S_{i,s}^*\})= \sum a_{(\alpha_1,\ldots, \alpha_p,\beta_1,\ldots, \beta_m)}S_{i_1,\alpha_1}\cdots S_{i_p,\alpha_p}S_{j_1,\beta_1}^*\cdots S_{j_m,\beta_m}^*,
$$
where $\alpha_1\in \FF_{n_{i_1}}^+,\ldots, \alpha_p\in \FF_{n_{i_p}}^+$ and $\beta_1\in \FF_{n_{j_1}}^+,\ldots, \beta_m\in \FF_{n_{j_m}}^+$.
We define
$$
p(\{T_{i,s}\}, \{T_{i,s}^*\}):=\sum a_{(\alpha_1,\ldots, \alpha_p,\beta_1,\ldots, \beta_m)}T_{i_1,\alpha_1}\cdots T_{i_p,\alpha_p}T_{j_1,\beta_1}^*\cdots T_{j_m,\beta_m}^*
$$
and note that the definition is correct due to the following von Neumann inequality obtained in \cite{Po-twisted}, i.e.
$$
\|p(\{T_{i,s}\}, \{T_{i,s}^*\})\|\leq  \|p(\{S_{i,s}\}, \{S_{i,s}^*\})\|
$$
for every  $k$-tuple  $T=(T_1,\ldots, T_k)$  in the {\it regular $\Lambda$-polyball}, which extends the classical result \cite{vN} and the noncommutative version for row contractions \cite{Po-von}.

The $\Lambda$-polyball algebra $\cA({\bf B}_\Lambda)$ is the normed closed non-self-adjoint   algebra  generated by  the isometries $S_{i,s}$, where $i\in\{1,\ldots, k\}$ and $j\in \{1,\ldots, n_i\}$,  and the identity.  We denote by  $C^*(\{S_{i,s}\})$ the $C^*$-algebra generated by the isometries $S_{i,s}$
We prove in \cite{Po-twisted} that
if $T\in {\bf B}_\Lambda (\cH)$, then the map
$$
\Psi_T(f):=\lim_{r\to 1}K_{rT}^*[f\otimes I] K_{rT}, \qquad f\in C^*(\{S_{i,s}\}),
$$
where the limit is in the operator norm topology, is a is completely contractive linear map. Moreover, its restriction to the  $\Lambda$-polyball algebra $\cA({\bf B}_\Lambda)$ is a completely contractive homomorphism.
If, in addition, $T$ is a pure $k$-tuple, i.e., for each $i\in \{1,\ldots, k\}$,  $\Phi_{T_i}^p(I)\to 0$, as $p\to\infty$, then $\Psi_T(f)=K_{T}^*[f\otimes I] K_{T}$. We call the map $\Psi_T$ the   {\it noncommutative Berezin transform} at $T$ associated with the $\Lambda$-polyball.

 \bigskip

\section{Noncommutative Hardy spaces associated with regular $\Lambda$-polyballs}

In this section, we introduce the noncommutative Hardy algebra $F^\infty({\bf B}_\Lambda)$, which can be seen as a noncommutative multivariable version of the Hardy algebra $H^\infty(\DD)$, and prove some basic properties.

According to relations \eqref{shift} and \eqref{laij}, for each $i\in \{1,\ldots, k\}$ and $\boldsymbol \alpha:=(\alpha_1,\ldots, \alpha_k)\in \FF_{n_1}^+\times\cdots \times \FF_{n_k}^+$, we have
$$
S_{i,g_s^i}(\chi_{\boldsymbol \alpha})=\boldsymbol\mu_i(g_s^i,\boldsymbol\alpha) \chi_{\alpha_1,\ldots, \alpha_{i-1}, g_s^i \alpha_i, \alpha_{i+1},\ldots, \alpha_k)},
$$
where
$$
\boldsymbol\mu_i(g_s^i,\boldsymbol\alpha):=
\boldsymbol{\lambda}_{i,1}(s,\alpha_1)\cdots \boldsymbol{\lambda}_{i,i-1}(s,\alpha_{i-1}).
$$
Consequently, if $\gamma_i:=g_{i_1}^i\cdots g_{i_p}^i\in \FF_{n_i}^+$, then
$$
S_{i,\gamma^i}(\chi_{\boldsymbol \alpha})=\boldsymbol\mu_i(\gamma_i,\boldsymbol\alpha) \chi_{\alpha_1,\ldots, \alpha_{i-1}, \gamma_i \alpha_i, \alpha_{i+1},\ldots, \alpha_k)},
$$
where
$$
\boldsymbol\mu_i(\gamma_i,\boldsymbol\alpha):=
\boldsymbol{\mu}_{i}(g_{i_1}^i, \boldsymbol\alpha)\cdots \boldsymbol{\mu}_{i}(g_{i_p}^i,\boldsymbol\alpha).
$$
Given $\boldsymbol \gamma:=(\gamma_1,\ldots, \gamma_k)\in  \FF_{n_1}^+\times\cdots \times \FF_{n_k}^+$, we deduce that
$$
S_{1,\gamma_1}\cdots S_{k,\gamma_k}(\chi_{\boldsymbol \alpha})=\boldsymbol\mu(\boldsymbol\gamma,\boldsymbol\alpha) \chi_{(\gamma_1\alpha_1,\ldots, \gamma_k\alpha_k)}
$$
where
$$
\boldsymbol\mu(\boldsymbol\gamma,\boldsymbol\alpha):=
\boldsymbol\mu_1(\gamma_1,\boldsymbol\alpha)\cdots \boldsymbol\mu_k(\gamma_k,\boldsymbol\alpha).
$$

Let $\{c_{(\beta_1,\ldots, \beta_k)}\}_{(\beta_1,\ldots, \beta_k)\in  \FF_{n_1}^+\times\cdots \times \FF_{n_k}^+}$ be a sequence of complex numbers such $\sum |c_{(\beta_1,\ldots, \beta_k)}|^2<\infty$ and consider the formal series
$$\varphi(\{S_{i,s}\}):=\sum_{(\beta_1,\ldots, \beta_k)\in  \FF_{n_1}^+\times\cdots \times \FF_{n_k}^+} c_{(\beta_1,\ldots, \beta_k)} S_{1,\beta_1}\ldots S_{1,\beta_k}.
$$
Set ${\bf g}_0:=(g_0^1,\ldots, g_0^k)$ and note that $ \boldsymbol \mu(\boldsymbol \beta, \bf{g}_0)\in \TT$ and
\begin{equation*}
\begin{split}
\varphi(\{S_{i,s}\})(\chi_{{\bf g}_0})&:=
\sum_{(\beta_1,\ldots, \beta_k)\in  \FF_{n_1}^+\times\cdots \times \FF_{n_k}^+} c_{(\beta_1,\ldots, \beta_k)} S_{1,\beta_1}\ldots S_{1,\beta_k}(\chi_{{\bf g}_0})\\
&=
\sum_{(\beta_1,\ldots, \beta_k)\in  \FF_{n_1}^+\times\cdots \times \FF_{n_k}^+} c_{(\beta_1,\ldots, \beta_k)} \boldsymbol \mu(\boldsymbol \beta, \bf{g}_0)\chi_{(\beta_1,\ldots, \beta_k)}
\end{split}
\end{equation*}
 is an element in $\ell^2(\FF_{n_1}^+\times\cdots \times \FF_{n_k}^+)$.
 Similarly,  for each $\boldsymbol \gamma:=(\gamma_1,\ldots, \gamma_k)\in  \FF_{n_1}^+\times\cdots \times \FF_{n_k}^+$, we have $\boldsymbol \mu(\boldsymbol \beta, \boldsymbol\gamma)\in \TT$ and
 $$
 \varphi(\{S_{i,s}\})(\chi_{\boldsymbol \gamma})=\sum_{(\beta_1,\ldots, \beta_k)\in  \FF_{n_1}^+\times\cdots \times \FF_{n_k}^+} c_{(\beta_1,\ldots, \beta_k)} \boldsymbol \mu(\boldsymbol \beta, \boldsymbol\gamma)\chi_{(\beta_1\gamma_1,\ldots, \beta_k\gamma_k)}
 $$
is an element in $\ell^2(\FF_{n_1}^+\times\cdots \times \FF_{n_k}^+)$.
Now,  let $\cP$ be the linear span of the vectors $\{\chi_{\boldsymbol \gamma}\}_{ \boldsymbol\gamma}$, assume that
$$\sup_{p\in \cP, \|p\|\leq 1} \|\varphi(\{S_{i,s}\})p\|<\infty.
$$
In this case, there is a unique operator $A\in B(\ell^2(\FF_{n_1}^+\times\cdots \times \FF_{n_k}^+))$ such that $Ap=\varphi(\{S_{i,s}\})p$ for any $p\in \cP$. We say that $\varphi(\{S_{i,s}\})$ is the formal Fourier series associated  $A$.
We denote by $F^\infty({\bf B}_\Lambda)$ the set of all operators $A$ obtained in this manner.

\begin{theorem}   \label{densities} Let   $\cP(\{S_{i,s}\})$ be  the algebra of all polynomials in $S_{i,s}$ and the identity, where $i\in \{1,\ldots, k\}$,  and $s\in \{1,\ldots, n_i\}$. Then  the noncommutative Hardy  space $F^\infty ({\bf B}_\Lambda)$ is   WOT- (resp. SOT-, w*-) closed
  and
$$
F^\infty({\bf B}_\Lambda)=\overline{\cP(\{S_{i,s}\})}^{\text{\rm SOT}}= \overline{\cP(\{S_{i,s}\})}^{\text{\rm WOT}}=\overline{\cP(\{S_{i,s}\})}^{\text{\rm w*}}.
$$
Moreover, $F^\infty({\bf B}_\Lambda)$ is the sequential SOT-(resp. WOT-, w*-) closure of  $\cP(\{S_{i,s}\})$.
\end{theorem}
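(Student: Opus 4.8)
The plan is to prove the single statement that $F^\infty({\bf B}_\Lambda)$ is WOT-closed, to deduce SOT- and w*-closedness from it for free, and then to produce, for each $A\in F^\infty({\bf B}_\Lambda)$, an explicit \emph{sequence} of polynomials in $\{S_{i,s}\}$ converging to $A$ simultaneously in all three topologies.

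First I would prove WOT-closedness by a matrix-entry argument. Recall that the Fourier coefficients of $A\in F^\infty({\bf B}_\Lambda)$ are read off from the single vector $A\chi_{{\bf g}_0}$ via $c_{\boldsymbol\beta}\boldsymbol\mu(\boldsymbol\beta,{\bf g}_0)=\langle A\chi_{{\bf g}_0},\chi_{\boldsymbol\beta}\rangle$, and that the multiplication rule $S_{1,\beta_1}\cdots S_{k,\beta_k}\chi_{\boldsymbol\gamma}=\boldsymbol\mu(\boldsymbol\beta,\boldsymbol\gamma)\chi_{(\beta_1\gamma_1,\dots,\beta_k\gamma_k)}$ recorded above (a consequence of \eqref{shift}) forces every matrix entry to follow the analytic pattern
$$
\langle A\chi_{\boldsymbol\gamma},\chi_{\boldsymbol\delta}\rangle=\begin{cases}c_{\boldsymbol\beta}\,\boldsymbol\mu(\boldsymbol\beta,\boldsymbol\gamma),&\text{if }\delta_i=\beta_i\gamma_i\ (1\le i\le k)\text{ for the necessarily unique }\boldsymbol\beta,\\ 0,&\text{otherwise.}\end{cases}
$$
Now let $A_\lambda\to A$ in WOT with $A_\lambda\in F^\infty({\bf B}_\Lambda)$ and $A\in B(\ell^2(\FF_{n_1}^+\times\cdots\times\FF_{n_k}^+))$. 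Since each $\boldsymbol\mu(\boldsymbol\beta,\boldsymbol\gamma)\in\TT$ is unimodular, convergence of the entries $\langle A_\lambda\chi_{{\bf g}_0},\chi_{\boldsymbol\beta}\rangle$ produces scalars $c_{\boldsymbol\beta}$ with $c_{\boldsymbol\beta}^{(\lambda)}\to c_{\boldsymbol\beta}$ and $\sum|c_{\boldsymbol\beta}|^2=\|A\chi_{{\bf g}_0}\|^2<\infty$; passing to the limit in the displayed pattern (whose index relation does not depend on $\lambda$) shows that $A$ acts on $\cP$ exactly as the formal series $\varphi(\{S_{i,s}\})$ with coefficients $\{c_{\boldsymbol\beta}\}$. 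As $A$ is bounded, $\sup_{p\in\cP,\|p\|\le1}\|\varphi(\{S_{i,s}\})p\|\le\|A\|<\infty$, so $A\in F^\infty({\bf B}_\Lambda)$. Because WOT is coarser than both SOT and w*, a WOT-closed set is automatically SOT- and w*-closed; in particular $\cP(\{S_{i,s}\})\subseteq F^\infty({\bf B}_\Lambda)$ gives $\overline{\cP(\{S_{i,s}\})}^{\,\tau}\subseteq F^\infty({\bf B}_\Lambda)$ for $\tau\in\{\text{SOT},\text{WOT},\text{w*}\}$.

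For the reverse inclusions I would pass through radial and Cesàro regularizations. Introduce the gauge unitaries $W_{\boldsymbol t}$, $\boldsymbol t=(t_1,\dots,t_k)\in\TT^k$, given by $W_{\boldsymbol t}\chi_{\boldsymbol\alpha}=t_1^{|\alpha_1|}\cdots t_k^{|\alpha_k|}\chi_{\boldsymbol\alpha}$, which satisfy $W_{\boldsymbol t}S_{i,s}W_{\boldsymbol t}^*=t_iS_{i,s}$; thus $A\mapsto W_{\boldsymbol t}AW_{\boldsymbol t}^*$ multiplies $c_{\boldsymbol\beta}$ by $\prod_i t_i^{|\beta_i|}$ and is a complete isometry. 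Averaging against a product of Fejér kernels $\prod_iF_m(\theta_i)$ yields a map $C_m$ that multiplies $c_{\boldsymbol\beta}$ by $\prod_i(1-|\beta_i|/(m+1))_+$; being an average of complete isometries against a probability measure, $C_m$ is unital and completely contractive, so $\|C_m[A]\|\le\|A\|$, and since it annihilates all coefficients whose multidegree exceeds $m$ in some component, $C_m[A]\in\cP(\{S_{i,s}\})$. The same averaging with the Poisson kernel produces the radial elements $A_r=\varphi(\{rS_{i,s}\})$, for which complete contractivity reads $\|A_r\|\le\|A\|$ and $A_r\in\cA({\bf B}_\Lambda)$; equivalently one identifies $A_r$ with the Berezin transform $K_{r{\bf S}}^*(A\otimes I)K_{r{\bf S}}$ of Theorem \ref{Berezin} applied to the pure tuple $r{\bf S}$. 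I regard this boundedness assertion — that the gauge averages are dominated by $\|A\|$ and lie in (the norm closure of) the polynomials — as the main technical point; everything else is formal.

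Finally I would pass to the limit. On each basis vector $\chi_{\boldsymbol\gamma}$ one has $C_m[A]\chi_{\boldsymbol\gamma}\to A\chi_{\boldsymbol\gamma}$ in $\ell^2$-norm by dominated convergence, since the truncation factors tend to $1$ and $\{c_{\boldsymbol\beta}\}\in\ell^2$; combined with the uniform bound $\|C_m[A]\|\le\|A\|$ and the fact that the $\chi_{\boldsymbol\gamma}$ span a dense subspace, this gives $C_m[A]\to A$ in SOT. As $\{C_m[A]\}$ is bounded, SOT-convergence forces WOT- and (on bounded sets, where WOT and w* agree) w*-convergence to the same limit. Hence the polynomials $C_m[A]$ converge to $A$ in all three topologies, so $F^\infty({\bf B}_\Lambda)$ is contained in the sequential SOT-, WOT-, and w*-closures of $\cP(\{S_{i,s}\})$. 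Together with the inclusions of the first part and the elementary facts that a sequential closure is contained in the corresponding closure and that finer topologies yield smaller closures, all three displayed closures coincide with $F^\infty({\bf B}_\Lambda)$ and equal the respective sequential closures. (The radial elements $A_r$ furnish a second such approximation and, via lower semicontinuity of the norm under SOT-limits together with $\|A_r\|\le\|A\|$, additionally yield $\|A\|=\sup_{0\le r<1}\|A_r\|=\lim_{r\to1}\|A_r\|$.)
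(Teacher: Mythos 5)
Your proof is correct, and its skeleton matches the paper's: WOT-closedness via extraction of the Fourier coefficients from matrix entries against the basis $\{\chi_{\boldsymbol\gamma}\}$, followed by a Fej\'er-type summation producing a bounded sequence of polynomials converging SOT to any given $A\in F^\infty({\bf B}_\Lambda)$. The implementation of the summation step is genuinely different, though. The paper works with the total-degree grading: it defines $Q_m(T)=\sum_{n\geq\max\{0,-m\}}P_nTP_{n+m}$, where $P_n$ projects onto the span of the $\chi_{(\beta_1,\ldots,\beta_k)}$ with $|\beta_1|+\cdots+|\beta_k|=n$, forms the Ces\`aro sums $C_n(T)=\sum_{|m|<n}\left(1-\frac{|m|}{n}\right)Q_m(T)$, and asserts (leaving the verification to the reader) that these are completely contractive; the resulting polynomials truncate by total degree. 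You instead average the $\TT^k$ gauge conjugations $W_{\boldsymbol t}AW_{\boldsymbol t}^*$ against a product of Fej\'er kernels, truncating by multidegree. What your version buys is precisely the point the paper glosses over: complete contractivity becomes transparent, since your $C_m$ is an average of unitary conjugations (complete isometries) against a probability measure; what the paper's version buys is that it needs only the one-parameter block structure and no operator-valued integration. A second difference is in the closure bookkeeping: you deduce w*-closedness directly from WOT-closedness, using that the WOT is coarser than the w*-topology, whereas the paper takes a detour through convexity ($F^\infty({\bf B}_\Lambda)$ is convex, hence w*-closed iff WOT sequentially closed); your route is shorter and equally valid, since the closedness was established for nets, not just sequences. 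One caution on a side remark: your parenthetical claim that the Poisson averages $A_r=\varphi(\{rS_{i,s}\})$ lie in $\cA({\bf B}_\Lambda)$ does not follow from the averaging bound $\|A_r\|\leq\|A\|$ alone --- membership in the norm-closed algebra requires the norm-convergence estimate proved in Theorem \ref{F} --- but nothing in your main argument depends on it.
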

\begin{proof}
First, we prove that the noncommutative Hardy  space $F^\infty({\bf B}_\Lambda)$ is WOT- (resp. SOT-) closed.
Let $\{A_\iota\}_\iota$ be a net in $F^\infty({\bf B}_\Lambda)$ and assume that WOT-$\lim_\iota A_\iota =A\in B(\ell^2(\FF_{n_1}^+\times\cdots \times \FF_{n_k}^+))$
If
$\sum_{(\beta_1,\ldots, \beta_k)\in  \FF_{n_1}^+\times\cdots \times \FF_{n_k}^+} c_{(\beta_1,\ldots, \beta_k)}^\iota S_{1,\beta_1}\ldots S_{1,\beta_k}
$
is the formal Fourier series of $A_\iota$, then
\begin{equation*}
\left<A \chi_{{\bf g}_0}, \chi_{(\beta_1,\ldots, \beta_k)}\right>=
\lim_\iota\left<A_\iota \chi_{{\bf g}_0}, \chi_{(\beta_1,\ldots, \beta_k)}\right>=\lim_\iota c_{(\beta_1,\ldots, \beta_k)}^\iota \boldsymbol\mu(\boldsymbol \beta,{\bf g}_0).
\end{equation*}
Define $c_{(\beta_1,\ldots, \beta_k)}:=\frac{1}{\boldsymbol\mu(\boldsymbol \beta,{\bf g}_0)}
\left<A \chi_{{\bf g}_0}, \chi_{(\beta_1,\ldots, \beta_k)}\right>$ and note that
$\lim_\iota c_{(\beta_1,\ldots, \beta_k)}^\iota=c_{(\beta_1,\ldots, \beta_k)}$.
On the other hand, we have
\begin{equation*}\begin{split}
\left<A \chi_{(\gamma_1,\ldots, \gamma_k)}, \chi_{(\beta_1\gamma_1,\ldots, \beta_k\gamma_k)}\right>&=
\lim_\iota\left<A_\iota \chi_{(\gamma_1,\ldots, \gamma_k)}, \chi_{(\beta_1\gamma_1,\ldots, \beta_k\gamma_k)}\right>\\
&=\lim_\iota c_{(\beta_1,\ldots, \beta_k)}^\iota \boldsymbol\mu(\boldsymbol \beta,\boldsymbol\gamma)\\
&=c_{(\beta_1,\ldots, \beta_k)}\mu(\boldsymbol \beta,\boldsymbol\gamma).
\end{split}
\end{equation*}
Note that
$$
\sum_{(\beta_1,\ldots, \beta_k)\in  \FF_{n_1}^+\times\cdots \times \FF_{n_k}^+} |c_{(\beta_1,\ldots, \beta_k)}|^2
=\sum_{(\beta_1,\ldots, \beta_k)\in  \FF_{n_1}^+\times\cdots \times \FF_{n_k}^+}
|\left<A \chi_{{\bf g}_0}, \chi_{(\beta_1,\ldots, \beta_k)}\right>|^2=\|A \chi_{{\bf g}_0}\|^2<\infty
$$
and consider the formal series
$$
\varphi(\{S_{i,s}\}):=\sum_{(\beta_1,\ldots, \beta_k)\in  \FF_{n_1}^+\times\cdots \times \FF_{n_k}^+} c_{(\beta_1,\ldots, \beta_k)} S_{1,\beta_1}\ldots S_{1,\beta_k}.
$$
Using the results above, one can see that
\begin{equation*}
\begin{split}
\left<A \chi_{(\gamma_1,\ldots, \gamma_k)}, \chi_{(\alpha_1,\ldots, \alpha_k)}\right>
&=\lim_\iota \left<A_\iota\chi_{(\gamma_1,\ldots, \gamma_k)}, \chi_{(\alpha_1,\ldots, \alpha_k)}\right>\\
&=\lim_\iota \left<  \sum_{(\beta_1,\ldots, \beta_k)\in  \FF_{n_1}^+\times\cdots \times \FF_{n_k}^+} c_{(\beta_1,\ldots, \beta_k)}^\iota S_{1,\beta_1}\ldots S_{1,\beta_k} \chi_{(\gamma_1,\ldots, \gamma_k)}, \chi_{(\alpha_1,\ldots, \alpha_k)}\right>\\
&=\lim_\iota \left<  \sum_{(\beta_1,\ldots, \beta_k)\in  \FF_{n_1}^+\times\cdots \times \FF_{n_k}^+} c_{(\beta_1,\ldots, \beta_k)}^\iota \boldsymbol \mu(\boldsymbol \beta, \boldsymbol\gamma)\chi_{(\beta_1\gamma_1,\ldots, \beta_k\gamma_k)}, \chi_{(\alpha_1,\ldots, \alpha_k)}\right>\\
&=\begin{cases}\lim_\iota  c_{(\beta_1,\ldots, \beta_k)}^\iota \boldsymbol \mu(\boldsymbol \beta, \boldsymbol\gamma),& \ \text{ if} \  (\alpha_1,\ldots,\alpha_k)=(\beta_1\gamma_1,\ldots, \beta_k\gamma_k)\\
0,& \  \text{ otherwise}
\end{cases}
\\
&=\begin{cases} c_{(\beta_1,\ldots, \beta_k)} \boldsymbol \mu(\boldsymbol \beta, \boldsymbol\gamma),& \ \text{ if} \  (\alpha_1,\ldots,\alpha_k)=(\beta_1\gamma_1,\ldots, \beta_k\gamma_k)\\
0,& \  \text{ otherwise}
\end{cases}
\\
&=
\left<\varphi(\{S_{i,s}\}) \chi_{(\gamma_1,\ldots, \gamma_k)}, \chi_{(\alpha_1,\ldots, \alpha_k)}\right>
\end{split}
\end{equation*}
for all $(\gamma_1,\ldots, \gamma_k), (\alpha_1,\ldots, \alpha_k)\in  \FF_{n_1}^+\times\cdots \times \FF_{n_k}^+$.
Consequently,  we have
$$\left<A p, \chi_{(\alpha_1,\ldots, \alpha_k)}\right>=
\left<\varphi(\{S_{i,s}\}) p, \chi_{(\alpha_1,\ldots, \alpha_k)}\right>
$$
for all $p\in \cP$. Hence, we deduce that
$$
\|Ap\|^2=\sum_{(\alpha_1,\ldots, \alpha_k)\in  \FF_{n_1}^+\times\cdots \times \FF_{n_k}^+}
|\left<A p, \chi_{(\alpha_1,\ldots, \alpha_k)}\right>|^2=\|\varphi(\{S_{i,s}\}) p\|^2
$$
which implies
$\sup_{p\in \cP, \|p\|\leq 1} \|\varphi(\{S_{i,s}\})p\|=\|A\|.
$
This shows that $A\in F^\infty({\bf B}_\Lambda)$ and $\varphi(\{S_{i,s}\})$ is its formal Fourier representation.

Now, we prove that  any operator in $F^\infty({\bf B}_\Lambda)$  is the   SOT-limit  of a sequence of  polynomials in $S_{i,s}$  and the identity.
For each $m\in \ZZ$, define the completely contractive projection
$Q_m: B(\ell^2(\FF_{n_1}^+\times\cdots \times \FF_{n_k}^+))\to B(\ell^2(\FF_{n_1}^+\times\cdots \times \FF_{n_k}^+))$ by setting
$$
Q_m(T):=\sum_{n\geq \max\{0,-m\}}P_nTP_{n+m},
$$
where $P_n$, $n\geq 0$, is the orthogonal projection of $\ell^2(\FF_{n_1}^+\times\cdots \times \FF_{n_k}^+)$ onto the span of all vectors $\chi_{(\beta_1,\ldots, \beta_k)}$ such that $|\beta_1|+\cdots +|\beta_k|=n$, where $\beta_i\in \FF_{n_i}^+$.
Consider the the Cesaro operators on $B(\ell^2(\FF_{n_1}^+\times\cdots \times \FF_{n_k}^+))$ defined by
$$
C_n(T):=\sum_{|m|<n}\left(1-\frac{|m|}{n}\right) Q_m(T),\qquad n\geq 1.
$$
One can easily see that these operators are completely contractive and SOT-$\lim_{n\to\infty} C_n(T)=T$.
Now, let $T\in F^\infty({\bf B}_\Lambda)$ have the formal Fourier representation
$$
\sum_{(\beta_1,\ldots, \beta_k)\in  \FF_{n_1}^+\times\cdots \times \FF_{n_k}^+} c_{(\beta_1,\ldots, \beta_k)} S_{1,\beta_1}\ldots S_{1,\beta_k}.
$$
Using the definition of the isometries $S_{i,s}$ we deduce that
$$
P_{n+m}TP_m=\left(\sum_{{(\beta_1,\ldots, \beta_k)\in  \FF_{n_1}^+\times\cdots \times \FF_{n_k}^+}\atop{|\beta_1|+\cdots +|\beta_k|=n}} c_{(\beta_1,\ldots, \beta_k)} S_{1,\beta_1}\ldots S_{1,\beta_k}\right)P_m
$$
for all $n,m\geq 0$.  On the other hand, we have $P_mTP_{n+m}=0$ if $n\geq 1$ and $m\geq 0$.
Consequently, we have
$$
C_n(T)=\sum_{0\leq p\leq n-1}\left( 1-\frac{p}{n}\right)
\left(\sum_{{(\beta_1,\ldots, \beta_k)\in  \FF_{n_1}^+\times\cdots \times \FF_{n_k}^+}\atop{|\beta_1|+\cdots +|\beta_k|=p}} c_{(\beta_1,\ldots, \beta_k)} S_{1,\beta_1}\ldots S_{1,\beta_k}\right)
$$
and SOT-$\lim_{n\to\infty} C_n(T)=T$. This shows that $T$ is the   SOT-limit  of a sequence of  polynomials in $S_{i,s}$  and the identity. Consequently, $T$ is also the WOT-(resp. w*-) limit  of a sequence of  polynomials in $S_{i,s}$  and the identity.  Denoting by $\cP(\{S_{i,s}\})$ the algebra of all polynomials in $S_{i,s}$  and the identity, we deduce that
$$
F^\infty({\bf B}_\Lambda)\subset \overline{\cP(\{S_{i,s}\})}^{\text{\rm SOT}}\subset \overline{\cP(\{S_{i,s}\})}^{\text{\rm WOT}}.
$$
Since $\cP(\{S_{i,s}\})\subset F^\infty({\bf B}_\Lambda)$  and $F^\infty({\bf B}_\Lambda)$ is WOT-closed, we have
$\overline{\cP(\{S_{i,s}\})}^{\text{\rm WOT}}\subset F^\infty({\bf B}_\Lambda)$.
Therefore,
$$
F^\infty({\bf B}_\Lambda)=\overline{\cP(\{S_{i,s}\})}^{\text{\rm SOT}}= \overline{\cP(\{S_{i,s}\})}^{\text{\rm WOT}}.
$$
Due to the results above, we also have
$F^\infty ({\bf B}_\Lambda)\subset \overline{\cP(\{S_{i,s}\})}^{\text{\rm w*}}$.
Moreover, since $F^\infty ({\bf B}_\Lambda)$ is a convex subset of $B(\ell^2(\FF_{n_1}^+\times\cdots \times \FF_{n_k}^+))$, we know that $F^\infty ({\bf B}_\Lambda)$ is w*-closed if and only if it is WOT sequential closed. Due to the results above, we conclude that $F^\infty ({\bf B}_\Lambda)$ is
w*-closed. Since $\cP(\{S_{i,s}\})\subset F^\infty({\bf B}_\Lambda)$, we have
$\overline{\cP(\{S_{i,s}\})}^{\text{\rm w*}}\subset F^\infty({\bf B}_\Lambda)$ and conclude that
$F^\infty ({\bf B}_\Lambda)= \overline{\cP(\{S_{i,s}\})}^{\text{\rm w*}}$.
The proof is complete.
\end{proof}

\begin{corollary} The noncommutative Hardy space $F^\infty({\bf B}_\Lambda)$ is a Banach algebra.
\end{corollary}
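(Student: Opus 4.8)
The plan is to verify the three defining properties of a Banach algebra for $F^\infty({\bf B}_\Lambda)$: that it is a linear subspace closed under multiplication, that it is complete in the operator norm, and that the norm is submultiplicative. Submultiplicativity is inherited automatically from the operator norm on $B(\ell^2(\FF_{n_1}^+\times\cdots\times\FF_{n_k}^+))$, and linearity is immediate from Theorem \ref{densities}, which exhibits $F^\infty({\bf B}_\Lambda)$ as the SOT-closure of the linear space $\cP(\{S_{i,s}\})$ and hence as a linear subspace. Thus the genuine content is closure under products together with completeness.

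For completeness I would argue that $F^\infty({\bf B}_\Lambda)$ is norm-closed. By Theorem \ref{densities} it is WOT-closed, and since operator-norm convergence implies WOT convergence, every norm-Cauchy sequence in $F^\infty({\bf B}_\Lambda)$ has its limit in $B(\ell^2(\cdots))$ already inside $F^\infty({\bf B}_\Lambda)$. Therefore $F^\infty({\bf B}_\Lambda)$ is a closed subspace of a Banach space and so is itself complete.

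For closure under multiplication, take $A,B\in F^\infty({\bf B}_\Lambda)$ and exploit the Cesàro operators $C_n$ from the proof of Theorem \ref{densities}. Since these are completely contractive with SOT-$\lim_n C_n(T)=T$, the polynomials $p_n:=C_n(A)$ and $q_n:=C_n(B)$ in $\cP(\{S_{i,s}\})$ satisfy $\|p_n\|\le\|A\|$, $\|q_n\|\le\|B\|$, with $p_n\to A$ and $q_n\to B$ in SOT. The products $p_nq_n$ again lie in $\cP(\{S_{i,s}\})$, and for a fixed vector $x$ the decomposition $p_nq_nx-ABx=p_n(q_nx-Bx)+(p_n-A)Bx$ together with the estimate $\|p_n(q_nx-Bx)\|\le\|A\|\,\|q_nx-Bx\|\to 0$ shows that $p_nq_n\to AB$ in SOT. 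Because $F^\infty({\bf B}_\Lambda)=\overline{\cP(\{S_{i,s}\})}^{\text{\rm SOT}}$ by Theorem \ref{densities}, the limit $AB$ belongs to $F^\infty({\bf B}_\Lambda)$.

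I expect the main obstacle to be exactly this last step, since the WOT- or SOT-closure of an algebra need not be an algebra: multiplication fails to be jointly continuous in these topologies on all of $B(\ell^2(\cdots))$. What rescues the argument is the uniform norm bound $\|p_n\|\le\|A\|$ provided by the complete contractivity of the Cesàro operators, because multiplication \emph{is} jointly SOT-continuous on norm-bounded sets. Hence the crucial input is not merely that $F^\infty({\bf B}_\Lambda)$ is an SOT-closure of $\cP(\{S_{i,s}\})$, but the finer fact recorded in Theorem \ref{densities} that each element admits approximating polynomials of controlled norm.
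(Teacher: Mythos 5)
Your proof is correct. The paper states this corollary without any proof, treating it as an immediate consequence of Theorem \ref{densities}; your argument supplies exactly the details left implicit there: completeness follows because WOT-closedness implies norm-closedness in $B(\ell^2(\FF_{n_1}^+\times\cdots\times \FF_{n_k}^+))$, and closure under products follows from the SOT-density of $\cP(\{S_{i,s}\})$ together with the norm control $\|C_n(A)\|\leq \|A\|$ furnished by the complete contractivity of the Ces\`aro operators. You also correctly flag the one genuine pitfall (an SOT-closure of an algebra need not be an algebra) and correctly identify the bounded approximation as what rescues the argument.

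One remark: the norm bounds, while a perfectly good route, are not strictly necessary. Multiplication is \emph{separately} SOT-continuous on all of $B(\ell^2(\FF_{n_1}^+\times\cdots\times \FF_{n_k}^+))$, so one can argue in two steps: first, for a fixed polynomial $p$ and $B\in F^\infty({\bf B}_\Lambda)$, approximate $B$ by polynomials $q_m\to B$ in SOT and use $pq_m x=p(q_m x)\to p(Bx)$ to get $pB\in \overline{\cP(\{S_{i,s}\})}^{\text{\rm SOT}}=F^\infty({\bf B}_\Lambda)$; then, for $A,B\in F^\infty({\bf B}_\Lambda)$, approximate $A$ by polynomials $p_n\to A$ in SOT and use $p_nBx=p_n(Bx)\to A(Bx)$ together with the first step and SOT-closedness to conclude $AB\in F^\infty({\bf B}_\Lambda)$. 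This avoids any appeal to joint continuity on bounded sets, though your bounded-Ces\`aro version has the merit of producing a single explicit sequence of polynomials converging to $AB$.
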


\begin{theorem}   \label{F}  Let $A\in F^\infty({\bf B}_\Lambda)$ have a formal Fourier representation
$$
\varphi(\{S_{i,s}\})=\sum_{(\beta_1,\ldots, \beta_k)\in  \FF_{n_1}^+\times\cdots \times \FF_{n_k}^+} c_{(\beta_1,\ldots, \beta_k)} S_{1,\beta_1}\ldots S_{k,\beta_k}.
$$
Then $\varphi(\{rS_{i,s}\})\in \cA({\bf B}_\Lambda)$, for all $r\in [0,1)$,
$$
A=\text{\rm SOT-}\lim_{r\to1}\varphi(\{rS_{i,s}\})
$$
and
$$
\|A\|=\sup_{0\leq r<1}\|\varphi(\{rS_{i,s}\})\|=\lim_{r\to 1}\|\varphi(\{rS_{i,s}\})\|.
$$
\end{theorem}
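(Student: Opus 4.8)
The plan is to realise each $\varphi(\{rS_{i,s}\})$ as a noncommutative Berezin transform of $A$ and to exploit the grading of the Fourier series by total length. First I would record the homogeneous decomposition: writing $A_{(n)}:=\sum_{|\beta_1|+\cdots+|\beta_k|=n}c_{(\beta_1,\ldots,\beta_k)}S_{1,\beta_1}\cdots S_{k,\beta_k}$ for the (finite) sum of monomials of total degree $n$, the computation in the proof of Theorem \ref{densities} identifies $A_{(n)}=Q_{-n}(A)$, so each $A_{(n)}$ is a polynomial with $\|A_{(n)}\|\le\|A\|$. Substituting $rS_{i,s}$ for $S_{i,s}$ multiplies a degree-$n$ monomial by $r^n$, whence
\[
\varphi(\{rS_{i,s}\})=\sum_{n\ge 0}r^nA_{(n)},
\]
a series converging absolutely in operator norm for $r\in[0,1)$ since $\sum_n r^n\|A_{(n)}\|\le\|A\|/(1-r)$. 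Its partial sums are polynomials in the $S_{i,s}$, so the norm limit lies in the norm-closed algebra $\cA({\bf B}_\Lambda)$; this proves the first assertion.

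Next I would verify that for each $r\in[0,1)$ the $k$-tuple $rS:=\{rS_{i,s}\}$ is a pure element of ${\bf B}_\Lambda$: the $\Lambda$-commutation relations are preserved under the scalar $r$, the positivity $\Delta_{\rho(rS)}(I)=\Delta_{(\rho r)S}(I)\ge0$ follows from $S\in{\bf B}_\Lambda$ because $\rho r<1$, and $\Phi_{(rS)_i}^p(I)=r^{2p}\Phi_{S_i}^p(I)\le r^{2p}I\to0$ gives purity. Since $rS$ is pure, the Berezin transform of Section 1 takes the form $\Psi_{rS}(f)=K_{rS}^*[f\otimes I]K_{rS}$ and restricts to a completely contractive homomorphism on $\cA({\bf B}_\Lambda)$ sending $S_{i,s}$ to $rS_{i,s}$. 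The key identity I would then establish is
\[
\varphi(\{rS_{i,s}\})=K_{rS}^*[A\otimes I]K_{rS}.
\]
To prove it I would use $A=\text{\rm SOT-}\lim_n C_n(A)$ as in Theorem \ref{densities}, with the Cesàro polynomials $C_n(A)=\sum_{p<n}(1-p/n)A_{(p)}$ uniformly bounded by $\|A\|$; because $X\mapsto K_{rS}^*[X\otimes I]K_{rS}$ is SOT-continuous on norm-bounded sets, it carries this limit to $\lim_n\Psi_{rS}(C_n(A))=\lim_n\sum_{p<n}(1-p/n)r^pA_{(p)}$, and these Abel means converge in norm to $\sum_p r^pA_{(p)}=\varphi(\{rS_{i,s}\})$. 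As $K_{rS}$ is a contraction by Theorem \ref{Berezin}(i), the identity yields at once the uniform bound $\|\varphi(\{rS_{i,s}\})\|\le\|A\|$.

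For the radial limit I would argue on the dense subspace $\cP$. For a basis vector $\chi_{\boldsymbol\gamma}$ the vectors $A_{(n)}\chi_{\boldsymbol\gamma}$ lie in pairwise orthogonal degree-subspaces, so $\|A\chi_{\boldsymbol\gamma}\|^2=\sum_n\|A_{(n)}\chi_{\boldsymbol\gamma}\|^2<\infty$ and
\[
\|\varphi(\{rS_{i,s}\})\chi_{\boldsymbol\gamma}-A\chi_{\boldsymbol\gamma}\|^2=\sum_n(1-r^n)^2\|A_{(n)}\chi_{\boldsymbol\gamma}\|^2\xrightarrow[r\to1]{}0
\]
by dominated convergence. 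Combining this pointwise convergence on $\cP$ with the uniform bound $\|\varphi(\{rS_{i,s}\})\|\le\|A\|$ and a standard $\varepsilon/3$ density argument gives $A=\text{\rm SOT-}\lim_{r\to1}\varphi(\{rS_{i,s}\})$.

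Finally, for the norm statement I would first note monotonicity: for $0\le r<\rho<1$ the homomorphism property gives $\varphi(\{rS_{i,s}\})=\Psi_{(r/\rho)S}(\varphi(\{\rho S_{i,s}\}))$, and complete contractivity of $\Psi_{(r/\rho)S}$ yields $\|\varphi(\{rS_{i,s}\})\|\le\|\varphi(\{\rho S_{i,s}\})\|$; hence $\sup_{0\le r<1}\|\varphi(\{rS_{i,s}\})\|=\lim_{r\to1}\|\varphi(\{rS_{i,s}\})\|$. Combining $\sup_r\|\varphi(\{rS_{i,s}\})\|\le\|A\|$ with the SOT-lower semicontinuity of the norm applied to $A=\text{\rm SOT-}\lim_{r\to1}\varphi(\{rS_{i,s}\})$, namely $\|A\|\le\liminf_{r\to1}\|\varphi(\{rS_{i,s}\})\|$, forces the three quantities to coincide. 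I expect the main obstacle to be the justification of the Berezin identity $\varphi(\{rS_{i,s}\})=K_{rS}^*[A\otimes I]K_{rS}$ — specifically the interchange of the SOT-limit defining $A$ with the Berezin map together with the Abel summation of the Cesàro means — since everything else (purity of $rS$, the contraction bound, the radial limit, and the norm identities) flows from it.
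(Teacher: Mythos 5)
Your proposal is correct, and its skeleton matches the paper's: both proofs hinge on the identity $\varphi(\{rS_{i,s}\})=K_{rS}^*(A\otimes I)K_{rS}$, deduce the uniform bound $\|\varphi(\{rS_{i,s}\})\|\leq\|A\|$ from it, get the SOT-limit from pointwise convergence on basis vectors plus that bound, and obtain monotonicity in $r$ by applying the same contraction bound to $\varphi(\{r_2S_{i,s}\})\in\cA({\bf B}_\Lambda)$ in place of $A$. Where you genuinely diverge is in how the two preliminary steps are carried out. For norm convergence of $\varphi(\{rS_{i,s}\})$, the paper estimates each homogeneous piece by Cauchy--Schwarz against $\|\Phi_{S_1}^{p_1}\circ\cdots\circ\Phi_{S_k}^{p_k}(I)\|^{1/2}$ and a binomial count, using only the $\ell^2$-summability of the coefficients; you instead invoke the contractive projections $Q_m$ from the proof of Theorem \ref{densities} to get $\|A_{(n)}\|\leq\|A\|$ and then a geometric series. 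Your route is cleaner but leans on the complete contractivity of $Q_m$, which the paper asserts rather than proves (it is standard, via averaging against the gauge unitaries), and note the paper's estimate proves slightly more, since it only needs the coefficients, not membership of $A$ in $F^\infty({\bf B}_\Lambda)$. For the key Berezin identity, the paper performs a direct matrix-coefficient computation: it checks $\left<K_{rS}\varphi(\{rS_{i,s}\})^*\chi_{\boldsymbol\gamma},\chi_{\boldsymbol\sigma}\otimes\chi_{\boldsymbol\omega}\right> = \left<(A^*\otimes I)K_{rS}\chi_{\boldsymbol\gamma},\chi_{\boldsymbol\sigma}\otimes\chi_{\boldsymbol\omega}\right>$ using the truncations $q_n$ and the intertwining $K_{rS}(rS_{i,s}^*)=(S_{i,s}^*\otimes I)K_{rS}$, and then uses that $K_{rS}$ is an isometry to solve for $\varphi(\{rS_{i,s}\})$. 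You replace this "hard" computation by a "soft" one: Ces\`aro means $C_n(A)\to A$ SOT with $\|C_n(A)\|\leq\|A\|$, SOT-continuity on bounded sets of $X\mapsto K_{rS}^*(X\otimes I)K_{rS}$, and norm convergence of the resulting Abel--Ces\`aro sums $\sum_{p<n}(1-p/n)r^pA_{(p)}$ to $\varphi(\{rS_{i,s}\})$ (which needs $\sum_p r^p\|A_{(p)}\|<\infty$, i.e.\ your step one). Both arguments are complete; yours recycles more of Theorem \ref{densities} and avoids the coefficient bookkeeping, while the paper's is self-contained at that point and directly exhibits the intertwining relation $K_{rS}\varphi(\{rS_{i,s}\})^*=(A^*\otimes I)K_{rS}$, which it reuses later (e.g.\ in the functional calculus of Section 3). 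Your treatment of the radial SOT-limit via orthogonality of the degree subspaces and dominated convergence is actually more detailed than the paper's one-line assertion.
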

\begin{proof}
Since $\Phi_{S_i}$ is a completely positive linear map with $\|\Phi_{S_i}(I)\|\leq 1$, we have
$$
\Phi_{S_1}^{p_1}\circ \cdots \circ \Phi_{S_k}^{p_k}(I)
\leq
\|\Phi_{S_k}^{p_k}(I)\|\cdots \|\Phi_{S_1}^{p_1}(I)\| I
\leq
\|\Phi_{S_k}(I)\|^{p_k}\cdots \|\Phi_{S_1}(I)\|^{p_1} I\leq I
$$
for  all $p_1,\ldots p_k\in \NN$. Consequently, for every $r\in [0,1)$, we have
\begin{equation*}
\begin{split}
&\sum_{p=0}r^p\left\|\sum_{{p_1,\ldots, p_k\in \NN\cup\{0\}}\atop{p_1+\cdots +p_k=p}}
\sum_{{\beta_1\in \FF_{n_1}^+,\ldots \beta_k\in \FF_{n_k}^+}\atop
{|\beta_1|=p_1,\ldots, |\beta_k|=p_k}} c_{(\beta_1,\ldots, \beta_k)} S_{1,\beta_1}\ldots S_{k,\beta_k}\right\|\\
&\qquad\leq
\sum_{p=0}r^p\sum_{{p_1,\ldots, p_k\in \NN\cup\{0\}}\atop{p_1+\cdots +p_k=p}}
\left(\sum_{{\beta_1\in \FF_{n_1}^+,\ldots \beta_k\in \FF_{n_k}^+}\atop
{|\beta_1|=p_1,\ldots, |\beta_k|=p_k}}|c_{(\beta_1,\ldots, \beta_k)}|^2\right)^{1/2}
\|\Phi_{S_1}^{p_1}\circ \cdots \circ \Phi_{S_k}^{p_k}(I)\|^{1/2}\| \\
&\qquad
\leq\sum_{p=0}r^p\sum_{{p_1,\ldots, p_k\in \NN\cup\{0\}}\atop{p_1+\cdots +p_k=p}}
\left(\sum_{{\beta_1\in \FF_{n_1}^+,\ldots \beta_k\in \FF_{n_k}^+}\atop
{|\beta_1|=p_1,\ldots, |\beta_k|=p_k}}|c_{(\beta_1,\ldots, \beta_k)}|^2\right)^{1/2}\\
&\qquad = \left(\sum_{\beta_1\in \FF_{n_1}^+,\ldots \beta_k\in \FF_{n_k}^+} |c_{(\beta_1,\ldots, \beta_k)}|^2\right)^{1/2}
\left(\sum_{p=0}r^p\sum_{{p_1,\ldots, p_k\in \NN\cup\{0\}}\atop{p_1+\cdots +p_k=p}} 1\right)\\
&\qquad =
\left(\sum_{\beta_1\in \FF_{n_1}^+,\ldots \beta_k\in \FF_{n_k}^+} |c_{(\beta_1,\ldots, \beta_k)}|^2\right)^{1/2} \sum_{p=0}^\infty r^p \left(\begin{matrix} p+k-1\\k-1\end{matrix}\right)<\infty.
\end{split}
\end{equation*}
This shows that
$$
\varphi(\{rS_{i,s}\}):=
\sum_{p=0}^\infty \sum_{{(\beta_1,\ldots, \beta_k)\in  \FF_{n_1}^+\times\cdots \times \FF_{n_k}^+}\atop {|\beta_1|+\cdots +|\beta_k|=p}} r^{|\beta_1|+\cdots +|\beta_k|} c_{(\beta_1,\ldots, \beta_k)} S_{1,\beta_1}\ldots S_{k,\beta_k}
$$
converges in the operator norm topology and, consequently,
 $\varphi(\{rS_{i,s}\})\in \cA({\bf B}_\Lambda)$.

The next step is to show that
\begin{equation}
\label{bound-vN}
\|\varphi(\{rS_{i,s}\})\|\leq \|A\|,\qquad r\in [0,1).
\end{equation}
For each $n\in \NN$, set
$$
q_n(\{S_{i,s}\}):=\sum_{p=0}^n\sum_{{(\beta_1,\ldots, \beta_k)\in  \FF_{n_1}^+\times\cdots \times \FF_{n_k}^+}\atop {|\beta_1|+\cdots +|\beta_k|=p}}  c_{(\beta_1,\ldots, \beta_k)} S_{1,\beta_1}\ldots S_{k,\beta_k}
$$
and note that
$$
\varphi(\{rS_{i,s}\})^*\chi_{(\alpha_1,\ldots \alpha_k)}=q_n(\{rS_{i,s}\})^*\chi_{(\alpha_1,\ldots \alpha_k)}, \qquad r\in [0,1),
$$ and
$$A^*\chi_{(\alpha_1,\ldots \alpha_k)}=q_n(\{S_{i,s}\})^*\chi_{(\alpha_1,\ldots \alpha_k)}$$
for all $(\alpha_1,\ldots, \alpha_k)\in  \FF_{n_1}^+\times\cdots \times \FF_{n_k}^+$ with
$|\alpha_1|+\cdots +|\alpha_k|\leq  n$.
According to Theorem \ref{Berezin}, the noncommutative Berezin transform
$K_{rS}:\ell^2( \FF_{n_1}^+\times\cdots \times \FF_{n_k}^+)\to \ell^2( \FF_{n_1}^+\times\cdots \times \FF_{n_k}^+)\otimes \ell^2( \FF_{n_1}^+\times\cdots \times \FF_{n_k}^+)
$
satisfies the relation
$K_{rS}(rS_{i,s}^*)=(S_{i,s}^*\otimes I)K_{rS}$ for every $i\in \{1,\ldots, k\}$ and $s\in \{1,\ldots, n_i\}$.
Let $\boldsymbol \gamma:=(\gamma_1,\ldots, \gamma_k)$, $\boldsymbol \sigma:=(\sigma_1,\ldots, \sigma_k)$, and $\boldsymbol \omega:=(\omega_1,\ldots, \omega_k)$ be in $\FF_{n_1}^+\times\cdots \times \FF_{n_k}^+$. Due to the definition of $S_{i,s}$ we have
$S_{k,\beta_k}^*\cdots S_{1,\beta_1}^*
\chi_{\boldsymbol \gamma}=0$ if $|\beta_1|+\cdots +|\beta_k|>|\gamma_1|+\cdots  +|\gamma_k|$.
Using the relations above  and the definition of  $K_{rS}$ and taking $n\geq |\gamma_1|+\cdots +|\gamma_k|$, we obtain
\begin{equation*}
\begin{split}
&\left< K_{rS}\varphi(\{rS_{i,s}\})^*\chi_{\boldsymbol \gamma}, \chi_{\boldsymbol\sigma}\otimes \chi_{\boldsymbol \omega}\right>\\
&=
\left< K_{rS}q_n(\{rS_{i,s}\})^*\chi_{\boldsymbol \gamma}, \chi_{\boldsymbol\sigma}\otimes \chi_{\boldsymbol \omega}\right>\\
&=
\left< (q_n(\{S_{i,s}\})^*\otimes I)K_{rS}\chi_{\boldsymbol \gamma}, \chi_{\boldsymbol\sigma}\otimes \chi_{\boldsymbol \omega}\right>\\
&=
\left< (q_n(\{S_{i,s}\})^*\otimes I)\left(
\sum_{\beta_1\in \FF_{n_1}^+,\ldots, \beta_k\in \FF_{n_k}^+}
\chi_{(\beta_1,\ldots, \beta_k)}\otimes  r^{|\beta_1+\cdots |\beta_k|} \Delta_{rS}(I)^{1/2}S_{k,\beta_k}^*\cdots S_{1,\beta_1}^*
\chi_{\boldsymbol \gamma}\right), \chi_{\boldsymbol\sigma}\otimes \chi_{\boldsymbol \omega}\right>\\
&=
\sum_{\boldsymbol \beta:=(\beta_1,\ldots, \beta_k)\in\ \FF_{n_1}^+\times \cdots \times \FF_{n_k}^+} r^{|\beta_1+\cdots + |\beta_k|}
\left<q_n(\{S_{i,s}\})^* \chi_{\boldsymbol \beta}, \chi_{\boldsymbol\sigma}
\right>
\left< S_{k,\beta_k}^*\cdots S_{1,\beta_1}^*
\chi_{\boldsymbol \gamma},  \Delta_{rS}(I)^{1/2} \chi_{\boldsymbol \omega}\right>\\
&=
\sum_{\boldsymbol \beta:=(\beta_1,\ldots, \beta_k)\in\ \FF_{n_1}^+\times \cdots \times \FF_{n_k}^+} r^{|\beta_1+\cdots + |\beta_k|}
\left<A^* \chi_{\boldsymbol \beta}, \chi_{\boldsymbol\sigma}
\right>
\left< S_{k,\beta_k}^*\cdots S_{1,\beta_1}^*
\chi_{\boldsymbol \gamma},  \Delta_{rS}(I)^{1/2} \chi_{\boldsymbol \omega}\right>\\
&=
\left< (A^*\otimes I)K_{rS}\chi_{\boldsymbol \gamma}, \chi_{\boldsymbol\sigma}\otimes \chi_{\boldsymbol \omega}\right>\\
\end{split}
\end{equation*}
for all $r\in [0,1)$. Since $A$ and $\varphi(\{rS_{i,s}\})$ are bounded operators, we deduce that
 $$
 K_{rS}\varphi(\{rS_{i,s}\})^*=(A^*\otimes I)K_{rS},\qquad r\in [0,1).
 $$
Since $K_{rS}$ is an isometry, we have
$\varphi(\{rS_{i,s}\})=K_{rS}^*(A\otimes I)K_{rS}$ and
\begin{equation}
\label{ine-vn}
\|\varphi(\{rS_{i,s}\})\|\leq \|A\|, \qquad r\in [0,1),
\end{equation}
which proves relation \eqref{bound-vN}.
Consequently, taking into account that
$$
A \chi_{\boldsymbol\alpha}=\lim_{ r\to 1}\varphi(\{rS_{i,s}\}\chi_{\boldsymbol\alpha},\qquad
(\alpha_1,\ldots, \alpha_k)\in  \FF_{n_1}^+\times\cdots \times \FF_{n_k}^+,
$$
we conclude that  $A=\text{\rm SOT-}\lim_{r\to1}\varphi(\{rS_{i,s}\})$.

To prove the last part of the theorem, let $0<r_1<r_2<1$. Since
$\varphi(\{r_2S_{i,s}\})\in \cA({\bf B}_\Lambda)$, inequality \eqref{ine-vn} applied  to $A=\varphi(\{r_2S_{i,s}\})$ implies
$
\|\varphi(\{rr_2S_{i,s}\})\|\leq \|\varphi(\{r_2S_{i,s}\})|$ for any $r\in [0,1)$. Taking $r=\frac{r_1}{r_2}$, we deduce that
$
\|\varphi(\{r_1S_{i,s}\})\|\leq \|\varphi(\{r_2S_{i,s}\})|$. The rest of the proof is straightforward.
 \end{proof}

\bigskip
\section{Functional calculus
}

In this section, we prove the existence of an $F^\infty({\bf B}_\Lambda)$-functional calculus  for the completely non-coisometric  (c.n.c.) elements   in the $\Lambda$-polyball.   This  extends the Sz.-Nagy--Foia\c s  the functional calculus  for c.n.c. contractions    and the functional calculus for c.n.c row contractions.

First, we consider the case of pure $k$-tuples in the regular $\Lambda$-polyball.

\begin{theorem}  \label{fiT} Let  $T=(T_1,\ldots, T_k)$ be a pure $k$-tuple in the regular $\Lambda$-polyball ${\bf B}_\Lambda(\cH)$,  where $\cH$ is a separable Hilbert space, and let
$\Psi_T: F^\infty({\bf B}_\Lambda) \to B(\cH)$  be defined by
$$
\Psi_T(A):=K_T^*(A\otimes I)K_T,\qquad A\in F^\infty({\bf B}_\Lambda),
$$
where $K_T$ is the noncommutative Berezin kernel associated with $T$.
Then the following statements hold.
\begin{enumerate}
\item[(i)]  $\Psi_T$  is WOT-(resp. SOT-) continuous  on bounded sets.

\item[(ii)]
$\Psi_T$ is a unital completely contractive homomorphism which is w*-continuous.
\item[(iii)]
If
$$
\varphi(\{S_{i,s}\})=\sum_{(\beta_1,\ldots, \beta_k)\in  \FF_{n_1}^+\times\cdots \times \FF_{n_k}^+} c_{(\beta_1,\ldots, \beta_k)} S_{1,\beta_1}\ldots S_{1,\beta_k}
$$
is the formal Fourier representation of $A\in F^\infty({\bf B}_\Lambda)$, then
$$
\Psi_T(A)=\text{\rm SOT-}\lim_{r\to 1} \varphi(\{rT_{i,s}\})
$$
and  $\Psi_T(p(\{S_{i,s}\}))=p(\{T_{i,s}\})$ for any polynomial $p(\{S_{i,s}\})\in \cP(\{S_{i,s}\})$.
\end{enumerate}

\end{theorem}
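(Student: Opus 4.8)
The plan is to read off two structural facts from Theorem \ref{Berezin} and let them drive everything. Purity forces the iterated limit in part (i) of that theorem to collapse to $I$, so $K_T^*K_T=I$ and $K_T$ is an isometry; and part (ii) gives the intertwining relation $K_TT_{i,s}^*=(S_{i,s}^*\otimes I)K_T$. Setting $\cM:=\ran K_T$, the isometry $K_T$ identifies $\cH$ with the closed subspace $\cM$, under which $\Psi_T(A)$ becomes the compression $P_\cM(A\otimes I)|_\cM$; the intertwining relation says precisely that $\cM$ is invariant under each $S_{i,s}^*\otimes I$, i.e. co-invariant for $S_{i,s}\otimes I$.

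For (i), I would first verify that $A\mapsto A\otimes I_{\cD(T)}$ is WOT- and SOT-continuous on bounded sets (immediate on elementary tensors, then extended using the uniform bound and density of finite sums of elementary tensors), and afterwards compose with the fixed contractions $X\mapsto K_T^*XK_T$. The unital and completely contractive claims in (ii) are then immediate from the isometry property: $\Psi_T(I)=K_T^*K_T=I$, and since $[\Psi_T(A_{pq})]=(I_m\otimes K_T)^*[A_{pq}\otimes I](I_m\otimes K_T)$ with $I_m\otimes K_T$ isometric, $\|[\Psi_T(A_{pq})]\|\le\|[A_{pq}]\|$.

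The homomorphism property is the heart of the matter. Because $K_TK_T^*=P_\cM\ne I$, multiplicativity is not formal; instead I would show that $\cM$ is co-invariant for the whole algebra $\{A\otimes I:A\in F^\infty({\bf B}_\Lambda)\}$. By Theorem \ref{densities} every such $A$ is a WOT-limit of polynomials in the $S_{i,s}$, tensoring with $I$ preserves bounded WOT-convergence, and invariance under adjoints passes to WOT-limits (if $(B_\iota\otimes I)^*\cM\subseteq\cM$ and $B_\iota\otimes I\to A\otimes I$ WOT, then $(A\otimes I)^*\cM\subseteq\cM$); hence $\cM$ is co-invariant for each $A\otimes I$. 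The compression onto a co-invariant subspace is multiplicative on the algebra: for $x,y\in\cM$ one has $\langle(AB\otimes I)x,y\rangle=\langle(B\otimes I)x,(A^*\otimes I)y\rangle=\langle P_\cM(B\otimes I)x,(A^*\otimes I)y\rangle=\langle(P_\cM(A\otimes I)|_\cM)(P_\cM(B\otimes I)|_\cM)x,y\rangle$, using $(A^*\otimes I)y\in\cM$. Transporting through the unitary $K_T:\cH\to\cM$ gives $\Psi_T(AB)=\Psi_T(A)\Psi_T(B)$. For w*-continuity, on bounded sets the w*-topology agrees with WOT, so (i) already yields w*-continuity on bounded sets, and Krein--Smulian promotes this to w*-continuity on the w*-closed space $F^\infty({\bf B}_\Lambda)$.

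For (iii), the polynomial identity follows by iterating $T_{i,s}K_T^*=K_T^*(S_{i,s}\otimes I)$ to get $K_T^*(S_{1,\beta_1}\cdots S_{k,\beta_k}\otimes I)K_T=T_{1,\beta_1}\cdots T_{k,\beta_k}$ and extending linearly to $\Psi_T(p(\{S_{i,s}\}))=p(\{T_{i,s}\})$. For the radial limit, Theorem \ref{F} supplies $\varphi(\{rS_{i,s}\})\in\cA({\bf B}_\Lambda)$ with $\|\varphi(\{rS_{i,s}\})\|\le\|A\|$ and $A=\text{SOT-}\lim_{r\to1}\varphi(\{rS_{i,s}\})$; since $\varphi(\{rS_{i,s}\})$ is a norm-limit of polynomials and $\Psi_T$ is norm-continuous, the polynomial identity gives $\Psi_T(\varphi(\{rS_{i,s}\}))=\varphi(\{rT_{i,s}\})$, the right-hand side converging by the von Neumann inequality $\|p(\{T_{i,s}\})\|\le\|p(\{S_{i,s}\})\|$. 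Applying the SOT-continuity from (i) to the bounded net $\{\varphi(\{rS_{i,s}\})\}_r$ yields $\Psi_T(A)=\text{SOT-}\lim_{r\to1}\varphi(\{rT_{i,s}\})$. I expect the main obstacle to be the homomorphism step: propagating co-invariance of $\ran K_T$ from the generators to all of $F^\infty({\bf B}_\Lambda)$ and running the compression argument, since multiplicativity genuinely fails at the purely formal level.
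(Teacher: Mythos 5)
Your proof is correct, and most of it runs parallel to the paper's own argument: parts (i) and (iii), as well as unitality and complete contractivity in (ii), use exactly the same ingredients (Theorem \ref{Berezin} for the isometry $K_T^*K_T=I$ and the intertwining $K_TT_{i,s}^*=(S_{i,s}^*\otimes I)K_T$; Theorem \ref{F} together with the von Neumann inequality for the radial limit in (iii)). Where you genuinely diverge is the multiplicativity of $\Psi_T$, which you rightly identify as the heart of the matter. The paper first checks that $\Psi_T$ is multiplicative on the polynomial algebra $\cP(\{S_{i,s}\})$ (immediate from the intertwining relation and $K_T^*K_T=I$) and then extends to all of $F^\infty({\bf B}_\Lambda)$ by a limit-of-products argument: sequential WOT-density (Theorem \ref{densities}), WOT/SOT-continuity of $\Psi_T$ on bounded sets, and uniform boundedness. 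You instead propagate co-invariance of $\cM=\ran K_T$ from the generators $S_{i,s}\otimes I$ to every $A\otimes I$ with $A\in F^\infty({\bf B}_\Lambda)$ via bounded WOT-limits, and then invoke the Sarason-type fact that compression to a co-invariant subspace is multiplicative on any algebra leaving it co-invariant; transporting through the unitary $K_T:\cH\to\cM$ gives $\Psi_T(AB)=\Psi_T(A)\Psi_T(B)$. This is a clean alternative: it separates the algebraic content (semi-invariance of $\cM$) from the topological content (density), and it avoids manipulating products of convergent bounded sequences, where one must be careful since multiplication is not jointly WOT-continuous. One point you leave implicit: to conclude $p_n\otimes I\to A\otimes I$ in the WOT you need the approximating polynomials to be \emph{bounded}; this is available because the Ces\`aro means used in the proof of Theorem \ref{densities} are completely contractive, hence bounded by $\|A\|$. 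Your treatment of w*-continuity also differs slightly: the paper gets it directly from the fact that $A\mapsto A\otimes I_\cH$ maps w*-convergent nets to w*-convergent nets, whereas you obtain w*-continuity on bounded sets from (i) and promote it by Krein--Smulian; both are valid, the paper's being more direct, yours requiring only bounded-set information.
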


\begin{proof} Let $\{A_\iota\}_\iota$ be a bounded net in $F^\infty({\bf B}_\Lambda)$. Then  WOT-$\lim_{\iota}A_\iota=0$ if and only if w*-$\lim_{\iota}A_\iota=0$. The latter relation implies
WOT-$\lim_{\iota}A_\iota\otimes I_\cH=0$  and w*-$\lim_{\iota}A_\iota\otimes I_\cH=0$.
Now, it is clear that  WOT-$\lim_{\iota}K_T^*(A_\iota\otimes I_\cH)K_T=0$, thus  $\Psi_T$ is WOT-continuous. Since the map $A\mapsto A\otimes I_\cH$  is SOT-continous  on bounded sets, so is  $\Psi_T$.

To prove (ii), note first that a net  $\{A_\iota\}_\iota$ in $F^\infty({\bf B}_\Lambda)$  converges to 0 in the w*-topology if and only if $A_\iota\otimes I_\cH\to 0$ in the w*-topology. This implies that $\Psi_T$ is continuous in the w*-topology.

On the other hand, since $T$ is a pure $k$-tuple, the noncommutative Berezin kernel $K_T$ is an isometry. Due to Theorem \ref{Berezin}, we have
$$
\left[ \Psi_T(A_{ij}\right]_{m\times m}=\text{\rm diag}_m (K_T^*)\left[A_{ij}\otimes I\right]_{m\times m}\text{\rm diag}_m (K_T)
$$
which implies
$$
\left\|\left[ \Psi_T(A_{ij}\right]_{m\times m}\right\|\leq \left\|\left[A_{ij}\right]_{m\times m}\right\|
$$
for every matrix $\left[A_{ij}\otimes I\right]_{m\times m}$ with entries in $F^\infty({\bf B}_\Lambda)$.
This proves that $\Psi_T$ is a unital completely contractive linear map.

Due to Theorem \ref{Berezin}, $\Psi_T$ is a homomorphism on the algebra of polynomial
$\cP(\{S_{i,s}\})$ which, due to Theorem  \ref{densities},  is sequenytially WOT-dense in
$F^\infty({\bf B}_\Lambda)$.  Since $\Psi_T$  is WOT- continuous  on bounded sets and using the principle of uniform boundedness, one can easily see that $\Psi_T$  is a homomorphism on
$F^\infty({\bf B}_\Lambda)$. This completes the proof of item (ii).

Now, we prove part (iii) of the theorem. According to Theorem \ref{F}, we have
$$
A=\text{\rm SOT-}\lim_{r\to1}\varphi(\{rS_{i,s}\})\quad \text{ and } \quad \|A\|=\sup_{0\leq r<1}\|\varphi(\{rS_{i,s}\})\|.
$$
 Since the map $X\mapsto X\otimes I_\cH$ is SOT-continuous on bounded sets, we have
 \begin{equation}
 \label{KT}
 K_T^*(A\otimes I_\cH)K_T=\text{\rm SOT-}\lim_{r\to 1} K_T^*(\varphi(\{rS_{i,s}\})\otimes I_\cH)K_T.
 \end{equation}
 On the other hand,
 $$
\varphi(\{rS_{i,s}\})=\sum_{(\beta_1,\ldots, \beta_k)\in  \FF_{n_1}^+\times\cdots \times \FF_{n_k}^+} c_{(\beta_1,\ldots, \beta_k)} r^{|\beta_1|+\cdots +|\beta_k|}S_{1,\beta_1}\ldots S_{1,\beta_k}
$$
is in $\cA({\bf B}_\Lambda)$ and the convergence is in the operator norm.
Setting
$$
q_n(\{rS_{i,s}\}):=\sum_{{(\beta_1,\ldots, \beta_k)\in  \FF_{n_1}^+\times\cdots \times \FF_{n_k}^+}\atop{|\beta_1|+\cdots +|\beta_k|\leq n}} c_{(\beta_1,\ldots, \beta_k)} r^{|\beta_1|+\cdots +|\beta_k|}S_{1,\beta_1}\ldots S_{1,\beta_k},
$$
we have  $\varphi(\{rS_{i,s}\})=\lim_{n\to\infty}q_n(\{rS_{i,s}\})$. Using the von Neumann type inequality
\begin{equation}\label{Cauchy}
\|q_n(\{rT_{i,s}\})-q_m(\{rT_{i,s}\})\|\leq \|q_n(\{rS_{i,s}\})-q_m(\{rS_{i,s}\}\|,
\end{equation}
 we also deduce that
$\varphi(\{rT_{i,s}\})=\lim_{n\to\infty}q_n(\{rT_{i,s}\})$ in the norm topology. Consequently,
$$
K_T^*(\varphi(\{rS_{i,s}\})\otimes I_\cH)K_T=\lim_{n\to\infty}K_T^*(q_n(\{rS_{i,s}\})\otimes I_\cH)K_T
=\lim_{n\to\infty} q_n(\{rT_{i,s}\})=\varphi(\{rT_{i,s}\}).
$$
Hence, and using relation \eqref{KT}, we obtain
$$
\Psi_T(A)=K_T^*(A\otimes I_\cH)K_T=\text{\rm SOT-}\lim_{r\to 1}\varphi(\{rT_{i,s}\}).
$$
The fact that $\Psi_T(p(\{S_{i,s}\}))=p(\{T_{i,s}\})$ for any polynomial $p(\{S_{i,s}\})\in \cP(\{S_{i,s}\})$ is due to Theorem \ref{Berezin}.
The proof is complete.
\end{proof}

\begin{lemma} \label{Ber}
Let $T:=(T_1,\ldots, T_k)\in {\bf B}_\Lambda(\cH)$ and let $A\in F^\infty({\bf B}_\Lambda)$ have the Fourier representation
$$
\varphi(\{S_{i,s}\}):=\sum_{p=0}^\infty\sum_{{(\beta_1,\ldots, \beta_k)\in  \FF_{n_1}^+\times\cdots \times \FF_{n_k}^+}\atop {|\beta_1|+\cdots +|\beta_k|=p}}  c_{(\beta_1,\ldots, \beta_k)} S_{1,\beta_1}\ldots S_{k,\beta_k}.
$$
Then the series defining $\varphi(\{rT_{i,s}\}$, $r\in [0,1)$,  is convergent in the operator norm topology and
$$
\varphi(\{rT_{i,s}\}=K_{rT}^*(A\otimes I_\cH)K_{rT},\qquad r\in [0,1),
$$
where $K_T$ is the noncommutative Berezin kernel of $T$.
\end{lemma}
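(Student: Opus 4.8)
The plan is to reduce the statement to the pure case already treated, after first securing the norm convergence by comparison with the shift. The first observation is that for each fixed $r\in[0,1)$ the tuple $rT:=(rT_1,\ldots,rT_k)$ is itself a \emph{pure} element of ${\bf B}_\Lambda(\cH)$. Indeed, it is $\Lambda$-commuting because $T$ is; it lies in the polyball since $\Delta_{\rho(rT)}(I)=\Delta_{(\rho r)T}(I)\geq 0$ for every $\rho\in[0,1)$ (as $\rho r\in[0,1)$ and $T\in{\bf B}_\Lambda(\cH)$); and it is pure because $\Phi_{rT_i}(I)\leq r^2 I$ forces $\Phi_{rT_i}^p(I)\leq r^{2p}I\to 0$. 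Consequently, by Theorem \ref{Berezin}(i), the limits $A_i$ vanish and $K_{rT}$ is an \emph{isometry}.

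Next I would settle the operator-norm convergence of $\varphi(\{rT_{i,s}\})$. Writing $q_n(\{X_{i,s}\})$ for the truncation of $\varphi$ to words of total length at most $n$, the von Neumann inequality recorded in the preliminaries gives $\|q_N(\{rT_{i,s}\})-q_n(\{rT_{i,s}\})\|\leq\|q_N(\{rS_{i,s}\})-q_n(\{rS_{i,s}\})\|$ for $N>n$. Since $q_n(\{rS_{i,s}\})\to\varphi(\{rS_{i,s}\})$ in norm by Theorem \ref{F}, the right-hand side is Cauchy, hence so is $\{q_n(\{rT_{i,s}\})\}_n$ in $B(\cH)$, and $\varphi(\{rT_{i,s}\})$ converges in operator norm. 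Running the same comparison uniformly for $s\in[0,r_0]$ with $r_0<1$ and invoking the binomial tail bound from the proof of Theorem \ref{F}, the truncations converge uniformly in $s$, so $s\mapsto\varphi(\{sT_{i,s}\})$ is a uniform limit of polynomials on $[0,r_0]$ and therefore norm continuous on $[0,1)$.

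For the identity itself I would exploit purity. Applying Theorem \ref{fiT} to the pure tuple $rT$ yields $K_{rT}^*(A\otimes I)K_{rT}=\Psi_{rT}(A)=\text{SOT-}\lim_{\rho\to1}\varphi(\{\rho rT_{i,s}\})$. By the norm continuity of $s\mapsto\varphi(\{sT_{i,s}\})$ just established, $\varphi(\{\rho rT_{i,s}\})\to\varphi(\{rT_{i,s}\})$ in norm as $\rho\to1^-$ (the argument $\rho r\to r<1$ stays in the interior $[0,1)$), so the SOT-limit is exactly $\varphi(\{rT_{i,s}\})$, giving the claimed formula. Alternatively, and avoiding the separability hypothesis attached to Theorem \ref{fiT}, one can argue directly in the style of Theorem \ref{F}: iterating the intertwining $K_{rT}(rT_{i,s})^*=(S_{i,s}^*\otimes I)K_{rT}$ of Theorem \ref{Berezin}(ii) and pairing against vectors $\chi_{\boldsymbol\sigma}\otimes d$ produces $K_{rT}\varphi(\{rT_{i,s}\})^*=(A^*\otimes I)K_{rT}$, whence, $K_{rT}$ being an isometry, $\varphi(\{rT_{i,s}\})=K_{rT}^*(A\otimes I)K_{rT}$.

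The delicate point throughout is that the Fourier data $\{c_{(\beta_1,\ldots,\beta_k)}\}$ are only $\ell^2$-summable, so the formal series for $A$ need not converge in any operator topology; every manipulation must be routed through the truncations $q_n$ together with the von Neumann inequality, and the factor $r<1$ (equivalently, the passage to the pure tuple $rT$) is precisely what restores norm convergence and legitimises these steps. I expect the main obstacle to be the rigorous passage from the truncated, norm-convergent identities to the full series — in the direct route, justifying the interchange of the sum over $\boldsymbol\beta$ with the Berezin intertwining — which is handled by noting that for each fixed $\boldsymbol\gamma$ only finitely many of the terms ${\bf S}_{\boldsymbol\beta}^*\chi_{\boldsymbol\gamma}$ are nonzero.
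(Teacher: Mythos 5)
Your proof is correct and takes essentially the same route as the paper: the paper likewise reduces to the pure tuple $rT$, gets norm convergence of $\varphi(\{rT_{i,s}\})$ from the von Neumann inequality applied to the truncations $q_n$ (relation \eqref{Cauchy}), proves radial norm continuity by the same uniform-convergence/$\epsilon/3$ argument, and then lets $t\to 1$ in the pure-case identity $\varphi(\{rtT_{i,s}\})=K_{rT}^*(\varphi(\{tS_{i,s}\})\otimes I_\cH)K_{rT}$ drawn from the proof of Theorem \ref{fiT}, using Theorem \ref{F} and SOT-continuity of $X\mapsto X\otimes I_\cH$ on bounded sets. Your explicit check that $rT$ is pure, and your separability-free intertwining variant (which is the computation of Theorem \ref{F} transplanted to $K_{rT}$), only make explicit what the paper uses implicitly.
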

\begin{proof} The fact that the series defining $\varphi(\{rT_{i,s}\}$, $r\in [0,1)$,  is convergent in the operator norm topology  follows from
 the proof of Theorem \ref{fiT}, where we showed that
 $\varphi(\{rT_{i,s}\})=\lim_{n\to\infty}q_n(\{rT_{i,s}\})$. Moreover, if $\epsilon>0$, there exists $N\in \NN$ such that
   $$
 \|q_n(\{rtS_{i,s}\})-\varphi(\{rtT_{i,s}\})\|\leq  \|q_n(\{rS_{i,s}\})-\varphi(\{rT_{i,s}\})\|<\frac{\epsilon}{3}
$$
for every $t\in [0,1]$ and $n\geq N$. Let $\delta\in (0,1)$  be such that
$$
\|q_N(\{rtS_{i,s}\})-\|q_N(\{rS_{i,s}\}))\|<\frac{\epsilon}{3}, \qquad t\in [\delta,1).
$$
Now, we can see that
\begin{equation*}
\begin{split}
\|\varphi(\{rS_{i,s}\})-\varphi(\{rtS_{i,s}\})\|&\leq \|\varphi(\{rS_{i,s}\})-q_N(\{rS_{i,s}\})\|+
\|q_N(\{rS_{i,s}\})-q_N(\{rtS_{i,s}\})\|\\
&=\|q_N(\{rtS_{i,s}\})-\varphi(\{rtS_{i,s}\})\|<\epsilon
\end{split}
\end{equation*}
for every $t\in [\delta,1)$. This shows that
$\varphi(\{rS_{i,s}\})=\lim_{t\to1}\varphi(\{rtS_{i,s}\})$ in the operator norm.
On the other hand, as we saw in the proof of Theorem \ref{fiT},
$$
\varphi(\{rtS_{i,s}\})=K_{rT}^*(\varphi(\{tS_{i,s}\})\otimes I_\cH)K_{rT}, \qquad r,t\in [0,1).
$$
Using the fact that $X\mapsto X\otimes I_\cH$ is SOT-continuous on bounded sets
and, due to Theorem \ref{F},
$A=\text{\rm SOT-}\lim_{t\to1}\varphi(\{tS_{i,s}\})$, we pass to the limit in the relation above as $t\to 1$ and  obtain
$$
\varphi(\{rT_{i,s}\}=K_{rT}^*(A\otimes I_\cH)K_{rT},\qquad r\in [0,1),
$$
The proof is complete.
\end{proof}

We say that  $T:=(T_1,\ldots, T_k)\in {\bf B}_\Lambda(\cH)$ is  a completely non-coisometric  $k$-tuple if there is no $h\in \cH$ , $h\neq 0$, such that
$$
\left<(id-\Phi_{T_k}^{p_k})\circ\cdots \circ (id-\Phi_{T_1}^{p_1})(I)h,h\right>=0
$$
for all $(p_1,\ldots, p_k)\in \NN^k$. We saw in the proof of Theorem \ref{Berezin} that
$$
(id-\Phi_{T_k}^{p_k})\circ\cdots \circ (id-\Phi_{T_1}^{p_1})(I)
=\sum_{s_1}^{p_1-1}\Phi_{T_1}\circ\cdots \circ \left(\sum_{s_k=1}^{p_k-1} \Phi_{T_k}\circ(\Delta_T(I))\right).
$$
This shows that the sequence $\left\{(id-\Phi_{T_k}^{p_k})\circ\cdots \circ (id-\Phi_{T_1}^{p_1})(I)\right\}_{(p_1,\ldots, p_k)\in \NN^k}$ is  increasing and, consequently,  $T$  is completely non-coisometric if and only if there is no $h\in \cH$ , $h\neq 0$, such that
$$
\lim_{p_k\to\infty}\ldots \left<\lim_{p_1\to\infty}(id-\Phi_{T_k}^{p_k})\circ\cdots \circ (id-\Phi_{T_1}^{p_1})(I)h,h\right>=0.
$$
Note that each pure $k$-tuple is completely non-coisometric.

The main result of this section is the following

\begin{theorem} Let $T:=(T_1,\ldots, T_k)\in {\bf B}_\Lambda(\cH)$ be a completely non-coisometric tuple. Then
$$
\Psi_T(A):=\text{\rm SOT-}\lim_{r\to 1} K_{rT}^*(A\otimes I_\cH)K_{rT},\qquad
A\in F^\infty({\bf B}_\Lambda),
$$
exists and defines a linear map $\Psi_T: F^\infty({\bf B}_\Lambda)\to B(\cH)$
with the following properties.
\begin{enumerate}
\item[(i)] If $\varphi(\{S_{i,s}\})$ is the Fourier representation of $A\in F^\infty({\bf B}_\Lambda)$, then
$$
\Psi_T(A)=\text{\rm SOT-}\lim_{r\to 1} \varphi(\{rT_{i,s}\}).
$$

\item[(ii)] $\Psi_T$ is WOT-(resp. SOT-, w*-) continuous on bounded sets.
\item[(iii)] $\Psi_T$ is a unital completely contractive homomorphism.
\end{enumerate}

\end{theorem}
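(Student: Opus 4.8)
The plan is to reduce the completely non-coisometric (c.n.c.) case to the pure case already handled in Theorem \ref{fiT}, using the approximation of $T$ by the strictly contractive tuples $rT=(rT_1,\ldots,rT_k)$ for $r\in[0,1)$. The crucial observation is that, by Lemma \ref{Ber}, for each $A\in F^\infty({\bf B}_\Lambda)$ we already have the identity $\varphi(\{rT_{i,s}\})=K_{rT}^*(A\otimes I_\cH)K_{rT}$, so the two expressions appearing in the statement and in item (i) coincide for every fixed $r\in[0,1)$; thus proving existence of the SOT-limit and proving item (i) are the same task. First I would therefore concentrate entirely on showing that $\text{\rm SOT-}\lim_{r\to1}K_{rT}^*(A\otimes I_\cH)K_{rT}$ exists.

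The main obstacle is precisely this existence of the limit, since the c.n.c. hypothesis must be used in an essential way (for a general tuple in ${\bf B}_\Lambda(\cH)$ the limit need not exist). The strategy I would follow mirrors the classical Sz.-Nagy--Foia\c s argument and its noncommutative row-contraction version in \cite{Po-funct}. By Theorem \ref{Berezin}(i), $K_{rT}^*K_{rT}=\lim_{p_k\to\infty}\cdots\lim_{p_1\to\infty}(id-\Phi_{rT_k}^{p_k})\circ\cdots\circ(id-\Phi_{rT_1}^{p_1})(I)$, and as $r\to1$ these operators increase to the corresponding limit built from the $\Phi_{T_i}$'s; the c.n.c. condition guarantees that this limiting positive operator has trivial kernel, i.e. is injective with dense range. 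The plan is to use this to control $\|K_{rT}h\|$ from below for $h$ in a dense subspace and hence to show that the net $\{K_{rT}h\}$ is Cauchy-like in $r$ on a dense set. Concretely, for a fixed $h\in\cH$ I would estimate $\|K_{rT}^*(A\otimes I)K_{rT}h-K_{r'T}^*(A\otimes I)K_{r'T}h\|$ by writing it through the defining series of $\varphi(\{rT_{i,s}\})$ and $\varphi(\{r'T_{i,s}\})$, controlling the difference termwise using the uniform bound $\sup_{0\le r<1}\|\varphi(\{rS_{i,s}\})\|=\|A\|$ from Theorem \ref{F} together with the $\ell^2$-summability of the Fourier coefficients and the c.n.c.\ injectivity.

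Once existence is established, the remaining items follow by a standard limiting and density argument. For item (ii), since each map $A\mapsto K_{rT}^*(A\otimes I_\cH)K_{rT}$ is a completely contractive WOT-(resp. SOT-, w*-)continuous map on bounded sets — exactly as verified for the pure case in Theorem \ref{fiT}(i)--(ii), using that $A\mapsto A\otimes I_\cH$ is continuous in these topologies on bounded sets — the pointwise-SOT limit $\Psi_T$ inherits complete contractivity and, via a uniform boundedness argument on bounded sets, the required continuity. For item (iii), the unitality and the multiplicative property are first checked on the polynomial algebra $\cP(\{S_{i,s}\})$, where $\Psi_T(p(\{S_{i,s}\}))=p(\{T_{i,s}\})$ by the von Neumann inequality and Theorem \ref{Berezin}; the homomorphism property then propagates to all of $F^\infty({\bf B}_\Lambda)$ using the sequential WOT-density of $\cP(\{S_{i,s}\})$ from Theorem \ref{densities} together with the separate WOT-continuity on bounded sets of multiplication and of $\Psi_T$. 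I expect the genuinely hard step to be the Cauchy estimate establishing existence of the SOT-limit; the homomorphism and continuity properties should then be routine transfers of the pure-case arguments.
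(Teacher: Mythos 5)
Your reduction of item (i) to the existence of the SOT-limit via Lemma \ref{Ber} is correct, and your plans for items (ii) and (iii) (complete contractivity from the isometries $K_{rT}$, homomorphism property from polynomials plus sequential WOT-density, as in Theorems \ref{fiT} and \ref{densities}) are in line with the paper. But the central step --- existence of $\text{\rm SOT-}\lim_{r\to1}\varphi(\{rT_{i,s}\})$ --- is where your proposal breaks down, and for structural reasons, not missing details. First, ``controlling $\|K_{rT}h\|$ from below'' is vacuous: for every $r\in[0,1)$ the tuple $rT$ is pure, so by Theorem \ref{Berezin}(i) the kernel $K_{rT}$ is an isometry and $\|K_{rT}h\|=\|h\|$ identically; the c.n.c.\ hypothesis plays no role there. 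Second, the net $\{K_{rT}h\}$ genuinely fails to be Cauchy as $r\to1$ for non-pure c.n.c.\ tuples: its components converge to those of $K_Th$, while $\|K_{rT}h\|=\|h\|$ can remain strictly larger than $\|K_Th\|$ (already for $k=1$, $n_1=1$ and a bilateral weighted shift with weights increasing to $1$), so norm convergence fails, and in such examples there is no dense set of ``good'' vectors. Third, termwise estimates using only the $\ell^2$-summability of the Fourier coefficients cannot control $\varphi(\{rT_{i,s}\})-\varphi(\{r'T_{i,s}\})$: the Fourier series of an element of $F^\infty({\bf B}_\Lambda)$ is in general only Abel-summable, not absolutely (or even SOT-) convergent, which is precisely why radial limits are needed in the first place.

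The idea your proposal is missing is the intertwining relation with the Berezin kernel of $T$ itself, not of $rT$. From Theorem \ref{Berezin}(ii) applied to the partial sums $q_n$ and passing to the limit, one obtains
$$
\varphi(\{rT_{i,s}\})K_T^*=K_T^*\left(\varphi(\{rS_{i,s}\})\otimes I_\cH\right),\qquad r\in[0,1).
$$
The c.n.c.\ hypothesis is used exactly here: it says $K_T^*K_T$ is injective, hence $\overline{\text{\rm range}\,K_T^*}=\cH$. On $\text{\rm range}\,K_T^*$ the limit exists for free, because $\varphi(\{rS_{i,s}\})\otimes I_\cH\to A\otimes I_\cH$ in SOT by Theorem \ref{F}; the uniform von Neumann bound $\|\varphi(\{rT_{i,s}\})\|\le\|\varphi(\{rS_{i,s}\})\|\le\|A\|$ then extends the convergence from this dense subspace to all of $\cH$ by an $\epsilon/3$ argument. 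Note also that the same relation, in its limit form $\Psi_T(A)K_T^*=K_T^*(A\otimes I_\cH)$, is what yields the WOT-(resp.\ SOT-) continuity on bounded sets in item (ii); being a pointwise SOT-limit of continuous maps is not by itself enough to conclude continuity.
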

\begin{proof}
Let $A\in F^\infty({\bf B}_\Lambda)$ have the Fourier representation
$$
\varphi(\{S_{i,s}\}):=\sum_{p=0}^\infty\sum_{{(\beta_1,\ldots, \beta_k)\in  \FF_{n_1}^+\times\cdots \times \FF_{n_k}^+}\atop {|\beta_1|+\cdots +|\beta_k|=p}}  c_{(\beta_1,\ldots, \beta_k)} S_{1,\beta_1}\ldots S_{k,\beta_k}.
$$
According to Theorem \ref{Berezin}, we have
$$
T_{i,s}K_T^*=K_T^*(S_{i,s}\otimes I_\cH)
$$
for all $i\in \{1,\ldots, k\}$ and $s\in \{1,\ldots, n_i\}$, where $K_T$ is the noncommutative Berezin kernel of $T$.
Since the series
$\varphi(\{rS_{i,s}\})$, $r\in [0,1)$, is convergent in the operator norm, so is $\varphi(\{rT_{i,s}\})$. To see this, it is enough to use relation \eqref{Cauchy}, where
$$
q_n(\{rS_{i,s}\}):=\sum_{{(\beta_1,\ldots, \beta_k)\in  \FF_{n_1}^+\times\cdots \times \FF_{n_k}^+}\atop{|\beta_1|+\cdots +|\beta_k|\leq n}} c_{(\beta_1,\ldots, \beta_k)} r^{|\beta_1|+\cdots +|\beta_k|}S_{1,\beta_1}\ldots S_{1,\beta_k}.
$$
Now, note that
$$
q_n(\{rT_{i,s}\})K_T^*=K_T^*(q_n(\{rS_{i,s}\})\otimes I_\cH).
$$
Taking $n\to \infty$, we deduce that
\begin{equation}
\label{fik}
\varphi(\{rT_{i,s}\})K_T^*=K_T^*(\varphi(\{rS_{i,s}\})\otimes I_\cH).
\end{equation}
On the other hand,  due to Theorem \ref{F}, we have
$$
A\otimes I_\cH=\text{\rm SOT-}\lim_{r\to1}\varphi(\{rS_{i,s}\})\otimes I_\cH.
$$
Using the later relation in \eqref{fik}, we deduce that the map $\Omega:\text{\rm range}\, K_T^*\to \cH$ defined by
$$
\Omega(K_T^*f):=\lim_{r\to 1} \varphi(\{rT_{i,s}\}) K_T^* f,\qquad f\in \ell^2(\FF_{n_1}^+\times\cdots \times \FF_{n_k}^+)\otimes \cD_T,
$$
is well-defined, linear, and
\begin{equation*}
\begin{split}
\|\Omega K_T^*f\|&\leq \limsup_{r\to 1} \|\varphi(\{rT_{i,s}\})\|\|K_T^*f\|\\
&\leq   \limsup_{r\to 1} \|\varphi(\{rS_{i,s}\})\|\|K_T^*f\|\\
&\leq \|A\| \|K_T^*f\|.
\end{split}
\end{equation*}
Since $T$ is a completely non-coisometric tuple, Theorem \ref{Berezin} shows that $K_T^*K_T$ is a one-to-one operator, which implies $\overline{\text{\rm range}\,K_T^*}=\cH$.
Due to inequalities above, $\Omega$ has a unique extension $\widetilde \Omega$ to a  bounded operator on $\cH$ with $\|\widetilde \Omega\|\leq \|A\|$.

In what follows, we show that
\begin{equation}
\label{Om}
\widetilde \Omega h=\lim_{r\to 1} \varphi(\{rT_{i,s}\})h,\qquad h\in \cH.
\end{equation}
Fix $h\in \cH$ and let $\{h_k\}_{k=1}^\infty\subset \text{\rm range}\, K_T^*$ such that $h_k\to h$ as $k\to\infty$.
Since $\|\varphi(\{rT_{i,s}\})\|\leq \|\varphi(\{rS_{i,s}\})\|\leq \|A\|$ for every $r\in [0,1)$, we deduce that
\begin{equation*}
\begin{split}
\|\widetilde \Omega h-\varphi(\{rS_{i,s}\})h\|&\leq \|\widetilde \Omega h-\widetilde \Omega h_k\|
+\|\widetilde \Omega h_k -\varphi(\{rT_{i,s}\})h_k\|
  +\|\varphi(\{rT_{i,s}\})h_k-\varphi(\{rT_{i,s}\})h\|\\
&\leq \|\widetilde \Omega \|\|h-h_k\|+\|\widetilde \Omega h_k-\varphi(\{rT_{i,s}\})h_k\|
+\|\varphi(\{rT_{i,s}\})\|\|h_k-h\| \\
&\leq 2\|A\|\|h-h_k\| +\|\widetilde \Omega h_k-\varphi(\{rT_{i,s}\})h_k\|.
\end{split}
\end{equation*}
Using the fact that $\widetilde \Omega h_k-\lim_{r\to 1} \varphi(\{rT_{i,s}\})h_k$, we deduce relation \eqref{Om}. According to Lemma \ref{Ber}, we have
$$
\varphi(\{rT_{i,s}\}=K_{rT}^*(A\otimes I_\cH)K_{rT},\qquad r\in [0,1).
$$
Consequently, taking $r\to 1$ and using relation \eqref{Om}, we obtain
$$
 \widetilde \Omega =\text{\rm SOT-}\lim_{r\to 1} K_{rT}^*(A\otimes I_\cH)K_{rT},
 $$
which shows that  $\Psi_T(A)= \widetilde \Omega$. Therefore, item (i) holds.
To prove part (ii), let $[A_{pq}]_{m\times m}$ be a matrix with entries in
 $F^\infty({\bf B}_\Lambda)$ and let $\varphi_{pq}(\{S_{i,s}\}$ be the Fourier representation of $A_{pq}$. Lemma \ref{Ber} shows that
 $$
[\varphi_{pq}(\{rT_{i,s}\}]_{m\times m}=\text{\rm diag}_m(K_{rT}^*)[A_{pq}\otimes I_\cH]_{m\times m}\text{\rm diag}_m(K_{rT}),\qquad r\in [0,1).
$$
On the other hand, since $K_{rT}$ is an isometry, we deduce that
$$
\|[\varphi_{pq}(\{rT_{i,s}\}]_{m\times m}\|\leq \|[A_{pq}\otimes I_\cH]_{m\times m}\|,\qquad r\in [0,1), m\in\NN.
$$
Since $
\Psi_T(A_{pq})=\text{\rm SOT-}\lim_{r\to 1} \varphi_{pq}(\{rT_{i,s}\}),
$
we deduce that $\Psi_T$ is a completely contractive linear map. Now, using that fact that
 $\Psi_T$ is a homomorphism  on the algebra of polynomials $\cP(\{S_{i,s}\})$ and that
$F^\infty({\bf B}_\Lambda)$ is the sequential WOT-closure of $\cP(\{S_{i,s}\})$ (see Theorem
\ref{densities}), one can use the WOT-continuity of $\Psi_T$ on bounded sets to deduce that
$\Psi_T$ is a homomorphism on $F^\infty({\bf B}_\Lambda)$.

Now, we prove part (iii).  Due to the proof of part (i), we have $\|\Psi_T(A)\|\leq \|A\|$ for all
$A\in F^\infty({\bf B}_\Lambda)$. On the other hand,  taking $r\to 1$ in relation \eqref{fik} we obtain
\begin{equation}
\label{AA}
\Psi_T(A) K_T^*=K_T^*(A\otimes I_\cH),\qquad A\in F^\infty({\bf B}_\Lambda).
\end{equation}
Let $\{A_\iota\}$ be a bounded net in $F^\infty({\bf B}_\Lambda)$ such that  $A_\iota \to A\in F^\infty({\bf B}_\Lambda)$ in the WOT (resp. SOT). Then  $A_\iota\otimes I_\cH \to A\otimes I_\cH $ in the WOT (resp. SOT).  Due to relation \eqref{AA}, we have
$\Psi_T(A_\iota) K_T^*=K_T^*(A_\iota\otimes I_\cH)$.
Since $\overline{\text{\rm range} \,K_T^*}=\cH$ and $\{\Psi_T(A_\iota)\}_\iota$ is a bounded net, we can easily see that $\Psi_T(A_\iota)\to \Psi_T(A)$ in  the WOT (resp. SOT).
The proof is complete.
\end{proof}

\bigskip

\section{ Free holomorphic functions on regular $\Lambda$-polyballs}

In this section,   we introduce the algebra  $H^\infty( {\bf B}_\Lambda^\circ )$ of  bounded  free holomorphic functions  on the interior of ${\bf B}_\Lambda(\cH)$, for any Hilbert space $\cH$,  and prove that it  is  completely isometric isomorphic to the noncommutative Hardy algebra  $F^\infty({\bf B}_\Lambda)$ introduced in Section 2. We also introduce the  albegra
 $ A( {\bf B}_\Lambda^\circ)$  and show that it is completely isometric isomorphic to the noncommutative  
 $\Lambda$-polyball algebra  $\cA({\bf B}_\Lambda)$.

If $A\in B(\cH)$ is an invertible positive operator, we write $A>0$. Recall that if $X\in {\bf B}_\Lambda(\cH)$, then
$$
\Delta_X(I):=(id-\Phi_{X_k})\circ\cdots \circ (id-\Phi_{X_1})(I). 
$$
\begin{proposition} \label{interior}
The set 
$${\bf B}_\Lambda^\circ (\cH):=\{ X\in {\bf B}_\Lambda (\cH): \ \Delta_X(I)>0\}
$$
  is relatively open in ${\bf B}_\Lambda (\cH)$ and
$$\overline{{\bf B}_\Lambda^\circ (\cH)}={\bf B}_\Lambda (\cH).
$$
Moreover, the interior
of ${\bf B}_\Lambda (\cH)$  coincides with ${\bf B}_\Lambda^\circ (\cH)$.
\end{proposition}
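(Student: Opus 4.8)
The plan is to reduce the whole proposition to a single positivity statement: for every $X\in{\bf B}_\Lambda(\cH)$ and every $r\in[0,1)$ the scaled tuple $rX$ again lies in ${\bf B}_\Lambda(\cH)$ and satisfies $\Delta_{rX}(I)>0$, i.e. $rX\in{\bf B}_\Lambda^\circ(\cH)$. Once this is in hand, everything else is soft. First I would dispose of relative openness. The map $X\mapsto\Delta_X(I)$ is a noncommutative polynomial in the entries $X_{i,s}$ and their adjoints, hence norm-continuous; and the set of invertible positive operators is open in the self-adjoint part of $B(\cH)$ (if $A\ge\varepsilon I$ and $\|B-A\|<\varepsilon$ with $B=B^*$ then $B\ge(\varepsilon-\|B-A\|)I>0$). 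Intersecting the open preimage with ${\bf B}_\Lambda(\cH)$ shows that $\{X\in{\bf B}_\Lambda(\cH):\Delta_X(I)>0\}$ is relatively open, using no structure beyond continuity.

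For density I would check that $rX\in{\bf B}_\Lambda(\cH)$ for $r\in[0,1)$: the $\Lambda$-commutation relation \eqref{commuting} is invariant under $X_{i,s}\mapsto rX_{i,s}$, and $\Delta_{t(rX)}(I)=\Delta_{(tr)X}(I)\ge0$ for $t\in[0,1)$ since $tr\in[0,1)$; moreover $rX\to X$ in norm as $r\to1$. Thus the only thing to prove is the crux $\Delta_{rX}(I)>0$ for $r<1$, and I would attack it by induction on $k$. The base case $k=1$ is immediate: $\Delta_{rX}(I)=I-r^2\Phi_{X_1}(I)\ge(1-r^2)I>0$ because $\Phi_{X_1}(I)\le I$. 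For the inductive step I would first record that all partial products are positive. Since the maps $\Phi_{X_i}$ commute and $\|\Phi_{rX_k}\|\le r^2<1$ forces $\lim_{q}\Phi_{rX_k}^{q}(I)=0$, the telescoping identity from the proof of Theorem \ref{Berezin} gives, writing $\Delta^{(k-1)}_{rX}$ for the product of the first $k-1$ factors,
\[
\Delta^{(k-1)}_{rX}(I)=\sum_{q\ge0}r^{2q}\,\Phi_{X_k}^{q}\bigl(\Delta_{rX}(I)\bigr)\ge0,
\]
and the same computation at every $s\in[0,1)$ shows $(X_1,\dots,X_{k-1})$ lies in the corresponding $(k-1)$-polyball; by the induction hypothesis $D:=\Delta^{(k-1)}_{rX}(I)>0$.

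Writing $\Delta_{rX}(I)=D^{1/2}\bigl(I-r^2D^{-1/2}\Phi_{X_k}(D)D^{-1/2}\bigr)D^{1/2}$, strict positivity is equivalent to $\bigl\|r^2D^{-1/2}\Phi_{X_k}(D)D^{-1/2}\bigr\|<1$, i.e. to the row $[\,rD^{-1/2}X_{k,s}D^{1/2}\,]_s$ being a strict row contraction. This is the heart of the matter and where I expect the main obstacle: the naive estimate only yields the bound $r^2\|D\|/\lambda_{\min}(D)$, which need not be $<1$ when $r$ is close to $1$, so a spectral-radius argument on the completely positive map $Y\mapsto r^2D^{-1/2}\Phi_{X_k}(D^{1/2}YD^{1/2})D^{-1/2}$ (which is similar to $r^2\Phi_{X_k}$, hence has spectral radius $<1$) does \emph{not} control the operator norm of its value at $I$. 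To close it I would genuinely use that $rX$ is a pure tuple, so that Theorem \ref{Berezin} gives $K_{rX}^*K_{rX}=I$, together with the positivity of $\Delta_{sX}(I)$ for all $s\in(r,1)$, to upgrade the known inequality $\Delta_{rX}(I)\ge0$ to strict positivity.

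Finally I would identify the interior. If $X$ is interior to ${\bf B}_\Lambda(\cH)$, then $\rho X\in{\bf B}_\Lambda(\cH)$ for some $\rho>1$; writing $X=\tfrac1\rho(\rho X)$ with $\tfrac1\rho<1$ and applying the crux to $\rho X\in{\bf B}_\Lambda(\cH)$ yields $\Delta_X(I)>0$, so $X\in{\bf B}_\Lambda^\circ(\cH)$ and the interior is contained in ${\bf B}_\Lambda^\circ(\cH)$. For the reverse inclusion I would show that within the ambient space of $\Lambda$-commuting row contractions the condition $\Delta_Y(I)>0$ is open and already forces $Y\in{\bf B}_\Lambda(\cH)$: by the same scaling/telescoping mechanism as above, $\Delta_Y(I)>0$ propagates to $\Delta_{sY}(I)\ge0$ for all $s\in[0,1)$, so $\{Y:\Delta_Y(I)>0\}$ coincides with ${\bf B}_\Lambda^\circ(\cH)$ and is open, whence ${\bf B}_\Lambda^\circ(\cH)$ lies in the interior. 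Combining the two inclusions gives equality, and the density statement $\overline{{\bf B}_\Lambda^\circ(\cH)}={\bf B}_\Lambda(\cH)$ follows at once from the second paragraph.
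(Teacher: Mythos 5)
Your reduction of the whole proposition to the single claim that $rX\in {\bf B}_\Lambda^\circ(\cH)$ for every $X\in{\bf B}_\Lambda(\cH)$ and $r\in[0,1)$ is exactly the paper's strategy, and your openness argument and the inclusion $Int({\bf B}_\Lambda(\cH))\subset {\bf B}_\Lambda^\circ(\cH)$ coincide with the paper's. The problem is that you do not prove the crux, and you say so yourself. Your induction on $k$ correctly reduces strict positivity of $\Delta_{rX}(I)$ to the norm bound $\|r^2D^{-1/2}\Phi_{X_k}(D)D^{-1/2}\|<1$, you correctly observe that no naive estimate gives this, and the proposed repair --- ``use that $rX$ is pure, so $K_{rX}^*K_{rX}=I$, together with positivity of $\Delta_{sX}(I)$ for $s\in(r,1)$, to upgrade $\geq 0$ to $>0$'' --- is a statement of intent, not an argument. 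Note that the identity which purity hands you directly, namely $\Delta_{rX}(I)=K_{rX}^*({\bf P}_\CC\otimes I)K_{rX}$ (using $(id-\Phi_{S_1})\circ\cdots\circ(id-\Phi_{S_k})(I)={\bf P}_\CC$), is circular: since $({\bf P}_\CC\otimes I)K_{rX}h=\chi_{{\bf g}_0}\otimes\Delta_{rX}(I)^{1/2}h$, its quadratic form at $h$ is exactly $\|\Delta_{rX}(I)^{1/2}h\|^2$, so it can never produce strictness. What actually closes the gap in the paper is a \emph{quantitative lower bound on the universal model}: by Lemma 4.3 of \cite{Po-twisted}, $(id-\Phi_{rS_k})\circ\cdots\circ(id-\Phi_{rS_1})(I)=\prod_{i=1}^k\bigl(I-\Phi_{rS_i}(I)\bigr)\geq (1-r^2)^k I$, a bound that is special to ${\bf S}$ (there the defect factors as a product of commuting positive operators, each $\geq (1-r^2)I$), combined with a two-scale Berezin transfer: for $t\in[0,1)$ the tuple $tY$ is pure, $K_{tY}$ is an isometry, and the intertwining of Theorem \ref{Berezin} gives $\Delta_{rtY}(I)=K_{tY}^*\bigl[(id-\Phi_{r(S_k\otimes I)})\circ\cdots\circ(id-\Phi_{r(S_1\otimes I)})(I)\bigr]K_{tY}\geq (1-r^2)^k I$; letting $t\to 1$ yields $\Delta_{rY}(I)\geq (1-r^2)^k I>0$. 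Your sketch names the right ingredients (purity, the Berezin kernel) but never identifies this universal-model bound, and without it no strict positivity appears; the heart of the proposition is therefore left unproved.

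There is a second, smaller gap in the last part. For the inclusion ${\bf B}_\Lambda^\circ(\cH)\subset Int({\bf B}_\Lambda(\cH))$ you assert that, within the ambient set of $\Lambda$-commuting row contractions, the single condition $\Delta_Y(I)>0$ already forces $\Delta_{sY}(I)\geq 0$ for all $s\in[0,1)$, ``by the same scaling/telescoping mechanism as above.'' It is not the same mechanism: the telescoping argument you set up runs in the opposite direction (it uses positivity of $\Delta_{sX}(I)$ at \emph{all} scales $s$, which is part of the definition of ${\bf B}_\Lambda(\cH)$, to obtain positivity of the partial products at one fixed scale), and the claim that positivity of the full defect at the single scale $s=1$ implies membership in ${\bf B}_\Lambda(\cH)$ is a nontrivial assertion you have not established. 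The paper needs no such claim: it concludes $Int({\bf B}_\Lambda(\cH))={\bf B}_\Lambda^\circ(\cH)$ by combining the scaling inclusion $Int({\bf B}_\Lambda(\cH))\subset{\bf B}_\Lambda^\circ(\cH)$ (which you also give) with the relative openness of ${\bf B}_\Lambda^\circ(\cH)$ in ${\bf B}_\Lambda(\cH)$ established at the start.
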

\begin{proof}
Let $X=(X_1,\ldots, X_k)\in {\bf B}_\Lambda^\circ (\cH)$ and assume that
$\Delta_X(I)> cI$ for some $c>0$. If $d\in (0,c)$, then there exists $\epsilon>0$ such that for all $Y=(Y_1,\ldots Y_k)\in {\bf B}_\Lambda (\cH)$ with $\|X_i-Y_i\|<\epsilon$ for $i\in \{1,\ldots, k\}$, we have
$$
-dI\leq \Delta_Y(I)-\Delta_X(I)\leq dI.
$$
Hence,
$$
\Delta_Y(I)=(\Delta_Y(I)-\Delta_X(I))+\Delta_X(I)\geq (c-d)I>0
$$
and, consequently,  $Y\in {\bf B}_\Lambda^\circ (\cH)$. Therefore, ${\bf B}_\Lambda^\circ (\cH)$ is a relatively open set in ${\bf B}_\Lambda (\cH)$.

Now, we prove that $\overline{{\bf B}_\Lambda^\circ (\cH)}={\bf B}_\Lambda (\cH)$.
To prove the  inclusion
 $ \overline{{\bf B}_\Lambda^\circ (\cH)}\subset{\bf B}_\Lambda (\cH)$, let
 $Y=(Y_1,\ldots Y_k)\in \overline{{\bf B}_\Lambda (\cH)}$, and  let $Y^{(n)}=(Y_1^{(n)},\ldots Y_k^{(n)})\in  {\bf B}_\Lambda (\cH)$ be  a sequence such that $Y^{(n)}\to Y$, as $n\to \infty$, in the norm topology of $B(\cH)^{n_1+\cdots +n_k}$.  Since,
for every $i,j\in \{1,\ldots, k\}$ with $i\neq j$ and every $s\in \{1,\ldots, n_i\}$, $t\in \{1,\ldots, n_j\}$,
 $$ Y^{(n)}_{i,s} Y^{(n)}_{j,t}=\lambda_{ij}(s,t)Y^{(n)}_{j,t}Y^{(n)}_{i,s},
$$
taking $n\to \infty$, we obtain $ Y_{i,s} Y_{j,t}=\lambda_{ij}(s,t)Y_{j,t}Y_{i,s}.
$
On the other hand,  we have
$$
(id-\Phi_{rY^{(n)}_k})\circ\cdots \circ (id-\Phi_{rY^{(n)}_1})(I)\geq 0,\qquad r\in [0,1), n\in \NN,
$$
which implies
$$
(id-\Phi_{rY_k})\circ\cdots \circ (id-\Phi_{rY_1})(I)\geq 0,\qquad r\in [0,1).
$$
Consequently, $Y\in {\bf B}_\Lambda(\cH)$.

Now, we prove the  inclusion
 ${\bf B}_\Lambda (\cH)\subset \overline{{\bf B}_\Lambda^\circ (\cH)}$.
Let $Y\in {\bf B}_\Lambda (\cH)$ and $r\in [0,1)$. Using Lemma 4.3 from \cite{Po-twisted}, we deduce that 
$$
(id-\Phi_{rS_k})\circ\cdots \circ (id-\Phi_{rS_1})(I)=\prod_{i=1}^k (I-\Phi_{rS_i}(I))\geq \prod_{i=1}^k (1-r^2)I.
$$
Applying Theorem \ref{Berezin} when $X=tY$, $t\in [0,1)$, we obtain
\begin{equation*}
\begin{split}
(id-\Phi_{rtY_k})\circ\cdots \circ (id-\Phi_{rtY_1})(I)
&=K_{tY}^*\left[ (id-\Phi_{rS_k})\circ\cdots \circ (id-\Phi_{rS_1})(I)\right]K_{tY}\\
&\geq  \prod_{i=1}^k (1-r^2)I.
\end{split}
\end{equation*}
Here, we use the fact that $tY$ is a pure $k$--tuple and $K_{tY}$ is an isometry.
Taking $t\to 1$, we get
$$
(id-\Phi_{rY_k})\circ\cdots \circ (id-\Phi_{rY_1})(I) \geq  \prod_{i=1}^k (1-r^2)I
$$
which shows that $rY\in {\bf B}_\Lambda^\circ (\cH)$ for all $r\in [0,1)$. Hence, it is clear that
$Y\in \overline{{\bf B}_\Lambda^\circ (\cH)}$.

Now, we prove the last part of the proposition.
If $X\in Int({\bf B}_\Lambda(\cH))$,  the interior of  ${\bf B}_\Lambda(\cH)$, then there exists $r_0\in (0,1)$ such that $\frac{1}{r_0}X\in {\bf B}_\Lambda(\cH)$. Hence, $X\in r_0{\bf B}_\Lambda(\cH)$. Thus $X=r_0Y$ for some $Y\in {\bf B}_\Lambda(\cH)$. We proved above that $r_0Y\in {\bf B}_\Lambda^\circ (\cH)$.
Consequently, $Int({\bf B}_\Lambda(\cH))\subset {\bf B}_\Lambda^\circ (\cH)$.
Since ${\bf B}_\Lambda^\circ (\cH)$ is relatively open in ${\bf B}_\Lambda (\cH)$, we conclude that
$Int({\bf B}_\Lambda(\cH))={\bf B}_\Lambda^\circ (\cH)$.
The proof is complete.
\end{proof}
\begin{corollary}
${\bf B}_\Lambda^\circ (\cH)=\bigcup_{0\leq r<1}r{\bf B}_\Lambda (\cH)$.
\end{corollary}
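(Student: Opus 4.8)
The plan is to read both inclusions directly off Proposition~\ref{interior} and its proof; the corollary is essentially a repackaging of what was established there, so I would not set up any new machinery.

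First I would verify $\bigcup_{0\le r<1} r{\bf B}_\Lambda(\cH) \subseteq {\bf B}_\Lambda^\circ(\cH)$. A typical element of the left-hand side has the form $rY$ with $Y\in{\bf B}_\Lambda(\cH)$ and $r\in[0,1)$, which is exactly the configuration handled while proving the inclusion ${\bf B}_\Lambda(\cH)\subseteq\overline{{\bf B}_\Lambda^\circ(\cH)}$ in Proposition~\ref{interior}. There, applying the Berezin transform identity of Theorem~\ref{Berezin} to the pure tuple $tY$ and letting $t\to1$ produced $\Delta_{rY}(I)\geq\prod_{i=1}^k(1-r^2)I>0$, so that $rY\in{\bf B}_\Lambda^\circ(\cH)$. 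Hence every dilate $rY$ lies in ${\bf B}_\Lambda^\circ(\cH)$, giving this inclusion at once.

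For the reverse inclusion ${\bf B}_\Lambda^\circ(\cH)\subseteq\bigcup_{0\le r<1} r{\bf B}_\Lambda(\cH)$, I would invoke the last assertion of Proposition~\ref{interior}, namely that ${\bf B}_\Lambda^\circ(\cH)=Int({\bf B}_\Lambda(\cH))$. Given $X\in Int({\bf B}_\Lambda(\cH))$, the scaling step in that proof supplies some $r_0\in(0,1)$ with $\tfrac1{r_0}X\in{\bf B}_\Lambda(\cH)$; writing $X=r_0\bigl(\tfrac1{r_0}X\bigr)$ then exhibits $X$ as an element of $r_0{\bf B}_\Lambda(\cH)$, and therefore of the union. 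Combining the two inclusions yields the asserted equality.

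Since both directions are already subsumed by Proposition~\ref{interior}, no genuinely new obstacle appears. The only point meriting attention is the dilation step for interior points, i.e.\ that an interior $X$ can be scaled up by a factor $1/r_0>1$ while remaining in ${\bf B}_\Lambda(\cH)$; but this is precisely the fact furnished by the identification $Int({\bf B}_\Lambda(\cH))={\bf B}_\Lambda^\circ(\cH)$ in the proposition, so I would simply cite it rather than reprove it.
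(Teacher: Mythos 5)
Your proof is correct and matches the paper's (implicit) argument: the corollary is stated without proof precisely because both inclusions are read off Proposition~\ref{interior}, namely the step showing $rY\in {\bf B}_\Lambda^\circ(\cH)$ for every $Y\in {\bf B}_\Lambda(\cH)$ and $r\in[0,1)$, and the identification $Int({\bf B}_\Lambda(\cH))={\bf B}_\Lambda^\circ(\cH)$ together with the scaling step $X=r_0\bigl(\tfrac{1}{r_0}X\bigr)$. Nothing further is needed.
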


For each $i\in \{1,\ldots, k\}$, let $Z_i=(Z_{i,1},\ldots, Z_{i,n_i})$ be an $n_i$-tuple of noncommutative indeterminates subject to the relations
$$ Z_{i,s} Z_{j,t}=\lambda_{ij}(s,t)Z_{j,t}Z_{i,s}
$$
for every $i,j\in \{1,\ldots, k\}$ with $i\neq j$ and every $s\in \{1,\ldots, n_i\}$, $t\in \{1,\ldots, n_j\}$.
We set  $Z_{i,\alpha}:=Z_{i,p_1}\cdots Z_{i,p_m}$ if $\alpha=g_{p_1}^i\cdots g_{p_m}^i\in \FF_{n_i}^+$, where $p_1,\ldots, p_m\in \{1,\ldots, n_i\}$  and  $Z_{i,g_0^i}:=1$.
If $\boldsymbol \beta:=(\beta_1,\ldots, \beta_k)\in \FF_{n_1}^+\times \cdots \times \FF_{n_k}^+$, we denote $Z_{\boldsymbol \beta}:=Z_{1,\beta_1}\cdots Z_{k,\beta_k}$ and $a_{\boldsymbol \beta}:=a_{(\beta_1\ldots\beta_k)}\in \CC$.
A formal power series
$$\varphi:=\sum_{\boldsymbol \beta\in \FF_{n_1}^+\times \cdots \times \FF_{n_k}^+} a_{\boldsymbol \beta} Z_{\boldsymbol \beta},\qquad a_{\boldsymbol \beta}\in \CC,
$$
in indeterminates $Z_{i,s}$, where $i\in \{1,\ldots, k\}$ and $s\in \{1,\ldots, n_i\}$, is called free holomorphic function on ${\bf B}_\Lambda^\circ$ if the series
$$\varphi(\{X_{i,s}\}):=\sum_{p=0}^\infty\sum_{{\boldsymbol\beta=(\beta_1,\ldots, \beta_k)\in  \FF_{n_1}^+\times\cdots \times \FF_{n_k}^+}\atop {|\beta_1|+\cdots +|\beta_k|=p}} a_{\boldsymbol \beta} X_{\boldsymbol \beta}
$$
is convergent  in the operator norm topology for any $X\in  {\bf B}_\Lambda^\circ (\cH)$ and any Hilbert space $\cH$.
We remark that the coefficients of a free holomorphic functions on $ {\bf B}_\Lambda^\circ $ are uniquely determined by its representation on an infinite dimensional separable Hilbert space.
Indeed, assume that   $\varphi(\{rS_{i,s}\})=0$ for any $r\in [0,1)$, where ${\bf S}=({\bf S}_1,\ldots, {\bf S}_k)$ is  the universal model associated with the $\Lambda$-polyball $ {\bf B}_\Lambda$. Using relation  \eqref{shift}, we obtain
\begin{equation*}
\begin{split}
0&=\left<\varphi(\{rS_{i,s}\}) \chi_{{\bf g}_0}, S_{1,\alpha_1}\cdots S_{k,\alpha_k}\chi_{{\bf g}_0}\right>\\
&=r^{|\alpha_1|+\cdots + |\alpha_k|}
\left< \sum_{(\beta_1,\ldots, \beta_k)\in  \FF_{n_1}^+\times\cdots \times \FF_{n_k}^+} a_{(\beta_1,\ldots, \beta_k)} \boldsymbol \mu(\boldsymbol \beta, \bf{g}_0)\chi_{(\beta_1,\ldots, \beta_k)}, \boldsymbol \mu(\boldsymbol \alpha, \bf{g}_0)\chi_{(\alpha_1,\ldots, \alpha_k)}
\right>\\
&=r^{|\alpha_1|+\cdots + |\alpha_k|}a_{(\alpha_1,\ldots, \alpha_k)} |\boldsymbol \mu(\boldsymbol \alpha, {\bf g}_0)|^2=r^{|\alpha_1|+\cdots + |\alpha_k|}a_{(\alpha_1,\ldots, \alpha_k)}.
\end{split}
\end{equation*}
Hence, $a_{(\alpha_1,\ldots, \alpha_k)}=0$, which proves our assertion.
We denote by $Hol( {\bf B}_\Lambda^\circ )$ the set of all free holomorphic functions on $ {\bf B}_\Lambda^\circ $.

\begin{proposition}  Let  ${\bf S}=({\bf S}_1,\ldots,{\bf S}_k)$ be the universal model associated with the $\Lambda$-polyball $ {\bf B}_\Lambda$.  Then
$\varphi:=\sum_{\boldsymbol \beta\in \FF_{n_1}^+\times \cdots \times \FF_{n_k}^+} a_{\boldsymbol \beta} Z_{\boldsymbol \beta}
$
is in $Hol( {\bf B}_\Lambda^\circ )$ if and only if  the series
$$
\varphi(\{rS_{i,s}\}):=\sum_{p=0}^\infty\sum_{{\boldsymbol\beta=(\beta_1,\ldots, \beta_k)\in  \FF_{n_1}^+\times\cdots \times \FF_{n_k}^+}\atop {|\beta_1|+\cdots +|\beta_k|=p}}  r^{|\beta_1|+\cdots +|\beta_k|}a_{\boldsymbol \beta} {\bf S}_{\boldsymbol \beta}
$$
is convergent in the operator norm topology for all $r\in [0,1)$.
\end{proposition}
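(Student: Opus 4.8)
The plan is to establish the two implications separately; the necessity is essentially immediate, while the content lies in the sufficiency.

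For the necessity, I would first note that the universal model $\mathbf{S}$ itself belongs to ${\bf B}_\Lambda$: it is a $k$-tuple of $\Lambda$-commuting row isometries, hence of row contractions, and the computation carried out in the proof of Proposition~\ref{interior} gives $(id-\Phi_{rS_k})\circ\cdots\circ(id-\Phi_{rS_1})(I)=\prod_{i=1}^k(I-\Phi_{rS_i}(I))\geq\prod_{i=1}^k(1-r^2)I\geq 0$ for every $r\in[0,1)$. By Proposition~\ref{interior}, $r\mathbf{S}\in{\bf B}_\Lambda^\circ$ for all $r\in[0,1)$, so if $\varphi\in Hol({\bf B}_\Lambda^\circ)$ then, by the very definition of a free holomorphic function, the series $\varphi(\{rS_{i,s}\})$ converges in the operator norm for each such $r$.

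For the sufficiency I would exploit homogeneity together with the von Neumann inequality $\|q(\{T_{i,s}\})\|\leq\|q(\{S_{i,s}\})\|$ valid for every $T\in{\bf B}_\Lambda$ (the adjoint-free instance of the inequality recalled after Theorem~\ref{Berezin}). Write $\varphi_{[p]}$ for the homogeneous component $\sum_{|\beta_1|+\cdots+|\beta_k|=p}a_{\boldsymbol\beta}Z_{\boldsymbol\beta}$. Given an arbitrary $X\in{\bf B}_\Lambda^\circ(\cH)$, the Corollary following Proposition~\ref{interior} lets me write $X=\rho Y$ with $\rho\in[0,1)$ and $Y\in{\bf B}_\Lambda(\cH)$. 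Homogeneity yields $\varphi_{[p]}(\{X_{i,s}\})=\rho^p\varphi_{[p]}(\{Y_{i,s}\})$, and the von Neumann inequality applied to $Y$ gives $\|\varphi_{[p]}(\{X_{i,s}\})\|\leq\rho^p\|\varphi_{[p]}(\{S_{i,s}\})\|$.

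It then remains to convert norm-convergence of the series at $r\mathbf{S}$ into absolute, hence norm, convergence at $X$. I would fix $r$ with $\rho<r<1$. Since $\varphi(\{rS_{i,s}\})$ converges in operator norm by hypothesis, its general term tends to zero, so $M:=\sup_p r^p\|\varphi_{[p]}(\{S_{i,s}\})\|<\infty$, i.e. $\|\varphi_{[p]}(\{S_{i,s}\})\|\leq Mr^{-p}$. Combining with the previous estimate gives $\|\varphi_{[p]}(\{X_{i,s}\})\|\leq M(\rho/r)^p$, and summing the geometric series (ratio $\rho/r<1$) shows $\sum_p\|\varphi_{[p]}(\{X_{i,s}\})\|<\infty$; thus $\varphi(\{X_{i,s}\})$ converges absolutely in the operator norm for every $X\in{\bf B}_\Lambda^\circ(\cH)$ and every $\cH$, so $\varphi\in Hol({\bf B}_\Lambda^\circ)$. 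The main (indeed only) obstacle is this last comparison: passing from mere norm-convergence of the series at radius $r$ to summability of $\sum_p\rho^p\|\varphi_{[p]}(\{S_{i,s}\})\|$. The device that resolves it is to spend a little of the available radius, working at $r\in(\rho,1)$ to extract the crude bound $\|\varphi_{[p]}(\{S_{i,s}\})\|=O(r^{-p})$, after which the gap between $\rho$ and $r$ forces geometric decay.
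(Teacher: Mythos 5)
Your proof is correct and takes essentially the same route as the paper, whose entire proof is the remark that the direct implication is obvious (since $r{\bf S}\in {\bf B}_\Lambda^\circ$) and that the converse follows from the noncommutative von Neumann inequality. Your decomposition $X=\rho Y$ with $Y\in{\bf B}_\Lambda(\cH)$, the term-by-term bound $\|\varphi_{[p]}(\{X_{i,s}\})\|\leq \rho^p\|\varphi_{[p]}(\{S_{i,s}\})\|$, and the geometric comparison at an intermediate radius $r\in(\rho,1)$ are precisely the details the paper leaves implicit.
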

\begin{proof} The direct implication is obvious. Note that the converse of the proposition is due to the noncommutative von Neumann inequality.
\end{proof}
We remark  that  $Hol( {\bf B}_\Lambda^\circ )$ is an algebra.  Let $H^\infty( {\bf B}_\Lambda^\circ )$ be the set of all $\varphi\in Hol( {\bf B}_\Lambda^\circ )$
such that
$$
\|\varphi\|_\infty:=\sup\|\varphi(\{X_{i,s}\})\|<\infty,
$$
where the supremum is taken over all $\{X_{i,s}\}\in {\bf B}_\Lambda^\circ(\cH)$ and any Hilbert space $\cH$. It is easy to see that $H^\infty( {\bf B}_\Lambda^\circ )$ is a Banach algebra under pointwise multiplication and the norm $\|\cdot\|_\infty$. There is an operator space structure on $H^\infty( {\bf B}_\Lambda^\circ )$, in the sense of Ruan \cite{P-book},  if we define the norms
$\|\cdot\|_m$ on $M_{m\times m}(H^\infty( {\bf B}_\Lambda^\circ )) $ by setting
$$
\|[\varphi_{uv}]_{m\times m}\|_m:=\sup\|[\varphi_{uv}(\{X_{i,s}\})]_{m\times m}\|,
$$
where the supremum is taken over all $\{X_{i,s}\}\in {\bf B}_\Lambda^\circ(\cH)$ and any Hilbert space.
 We remark that if $\varphi\in Hol( {\bf B}_\Lambda^\circ )$ and $r\in [0,1)$, then $\varphi$ is continuous on $r{\bf B}_\Lambda(\cH)$ and
 $$
 \|\varphi(\{X_{i,s}\})\|\leq \|\varphi(\{rS_{i,s}\})\|
$$
for every $\{X_{i,s}\}\in r {\bf B}_\Lambda(\cH)$. Moreover, the series defining $\varphi(\{X_{i,s}\})$ converges uniformly on $r {\bf B}_\Lambda(\cH)$ in the operator norm topology.

Given $A\in F^\infty({\bf B}_\Lambda)$ and a Hilbert space $\cH$, we define the {\it noncommutative Berezin transform} associated with the regular $\Lambda$-polyball
$ {\bf B}_\Lambda^\circ(\cH)$ to be the map
${\bf B}[A]: {\bf B}_\Lambda^\circ(\cH)\to B(\cH)$ defined by
$$
{\bf B}[A](X):={K}_X^*[A\otimes I_\cH]{K}_X,\qquad X\in {\bf B}_\Lambda^\circ(\cH).
$$

\begin{theorem}
\label{H-inf}
The map $\Gamma: H^\infty( {\bf B}_\Lambda^\circ )\to F^\infty({\bf B}_\Lambda)$
defined by
$$\Gamma\left(\sum_{\boldsymbol \beta\in \FF_{n_1}^+\times \cdots \times \FF_{n_k}^+} a_{\boldsymbol \beta} Z_{\boldsymbol \beta}\right):=\sum_{\boldsymbol \beta\in \FF_{n_1}^+\times \cdots \times \FF_{n_k}^+} a_{\boldsymbol \beta} {\bf S}_{\boldsymbol \beta}
$$
is a completely isometric isomorphism of operator algebras. Moreover, if  $f\in Hol( {\bf B}_\Lambda^\circ )$, then the following statements are equivalent.
\begin{enumerate}
\item[(i)]  $f\in H^\infty( {\bf B}_\Lambda^\circ )$;
\item[(ii)]  $\sup_{1\leq r<1}\|f(\{rS_{i,s}\})\|<\infty$;
\item[(iii)] there exists $A\in F^\infty({\bf B}_\Lambda)$ with $f={\bf B}[A]$, where ${\bf B}$ is the noncommutative Berezin transform associated with  the $\Lambda$-polyball ${\bf B}_\Lambda^\circ$.
\end{enumerate}
In this case, we have
$$\Gamma(f)=\text{\rm SOT-}\lim_{r\to 1} f(\{rS_{i,s}\}\quad \text{and}\quad \Gamma^{-1}(f)={\bf B}[A].
$$
Moreover,  $\|\Gamma(f)\|=\sup_{1\leq r<1}\|f(\{rS_{i,s}\})\|$.
\end{theorem}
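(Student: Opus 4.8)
The plan is to realize $\Gamma$ as the map sending a free holomorphic function to the operator on Fock space carrying the same Fourier coefficients, and to prove that the three quantities $\|f\|_\infty$, $\sup_{0\le r<1}\|f(\{rS_{i,s}\})\|$, and $\|\Gamma(f)\|$ all coincide. The workhorse throughout is the Berezin identity $f(\{rS_{i,s}\})=K_{rS}^*(A\otimes I)K_{rS}$ furnished by Theorem \ref{F} and Lemma \ref{Ber}, together with the noncommutative von Neumann inequality. First I would dispatch the equivalence (i)$\Leftrightarrow$(ii) and the equality $\|f\|_\infty=\sup_{0\le r<1}\|f(\{rS_{i,s}\})\|$. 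By the Corollary following Proposition \ref{interior} we have ${\bf B}_\Lambda^\circ(\cH)=\bigcup_{0\le r<1}r{\bf B}_\Lambda(\cH)$, and the remark preceding the definition of the Berezin transform gives $\|f(\{X_{i,s}\})\|\le\|f(\{rS_{i,s}\})\|$ for every $X\in r{\bf B}_\Lambda(\cH)$. Since each $rS$ itself lies in ${\bf B}_\Lambda^\circ(\cH)$, the supremum defining $\|f\|_\infty$ is attained along the radii, which yields the claim.

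The crux is the construction of $A=\Gamma(f)$ under hypothesis (ii) with $M:=\sup_{0\le r<1}\|f(\{rS_{i,s}\})\|<\infty$. Evaluating on $\chi_{{\bf g}_0}$ gives $\|f(\{rS_{i,s}\})\chi_{{\bf g}_0}\|^2=\sum_{\boldsymbol\beta}|a_{\boldsymbol\beta}|^2r^{2(|\beta_1|+\cdots+|\beta_k|)}\le M^2$, so letting $r\to1$ forces $\sum_{\boldsymbol\beta}|a_{\boldsymbol\beta}|^2\le M^2<\infty$. On each basis vector $\chi_{\boldsymbol\gamma}$ dominated convergence then shows $f(\{rS_{i,s}\})\chi_{\boldsymbol\gamma}$ converges in $\ell^2$ as $r\to1$, so $f(\{rS_{i,s}\})$ converges on the dense subspace $\cP$ to a linear map bounded by $M$; its extension $A$ satisfies $\|A\|\le M$ and, by definition, belongs to $F^\infty({\bf B}_\Lambda)$ with Fourier representation $\sum a_{\boldsymbol\beta}S_{\boldsymbol\beta}$, i.e. $A=\Gamma(f)$. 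Uniform boundedness upgrades convergence on $\cP$ to $A=\text{\rm SOT-}\lim_{r\to1}f(\{rS_{i,s}\})$, and Theorem \ref{F} gives $\|A\|=\sup_{0\le r<1}\|f(\{rS_{i,s}\})\|=M$, which is the isometric statement. I expect this step---verifying that the radial evaluations genuinely converge and that the limit satisfies the defining boundedness condition of $F^\infty({\bf B}_\Lambda)$---to be the main obstacle; everything else rests on it.

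To close the cycle of equivalences I would bring in (iii). Writing an arbitrary $X\in{\bf B}_\Lambda^\circ(\cH)$ as $X=rY$ with $Y\in{\bf B}_\Lambda(\cH)$ and $r\in[0,1)$, Lemma \ref{Ber} gives $f(\{X_{i,s}\})=K_{rY}^*(A\otimes I)K_{rY}={\bf B}[A](X)$, so $f={\bf B}[A]$, proving (ii)$\Rightarrow$(iii); conversely $f={\bf B}[A]$ forces $\sup_{0\le r<1}\|f(\{rS_{i,s}\})\|=\|A\|<\infty$ via Theorem \ref{F}, giving (iii)$\Rightarrow$(ii). Surjectivity of $\Gamma$ follows because any $A\in F^\infty({\bf B}_\Lambda)$ has a square-summable Fourier representation whose radial series converge in norm (Theorem \ref{F}), so the associated $f$ lies in $H^\infty({\bf B}_\Lambda^\circ)$ with $\Gamma(f)=A$; injectivity follows from the uniqueness of the coefficients of a free holomorphic function established before the definition of $Hol({\bf B}_\Lambda^\circ)$. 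The homomorphism property follows from the multiplicativity of evaluation, $(fg)(\{rS_{i,s}\})=f(\{rS_{i,s}\})g(\{rS_{i,s}\})$, on passing to SOT-limits of the uniformly bounded nets.

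Finally, to promote isometry to complete isometry I would repeat the argument entrywise. The matrix identity $[\varphi_{uv}(\{rS_{i,s}\})]_{m\times m}=\text{\rm diag}_m(K_{rS}^*)[A_{uv}\otimes I]_{m\times m}\text{\rm diag}_m(K_{rS})$ with $K_{rS}$ an isometry gives $\|[\varphi_{uv}(\{rS_{i,s}\})]_{m\times m}\|\le\|[A_{uv}]_{m\times m}\|$, while $[A_{uv}]=\text{\rm SOT-}\lim_r[\varphi_{uv}(\{rS_{i,s}\})]$ gives the reverse inequality; combined with the first step carried out at the matrix level (the von Neumann inequality being completely contractive) this identifies both with $\|[\varphi_{uv}]\|_m$, so $\|[\varphi_{uv}]\|_m=\|[\Gamma(\varphi_{uv})]\|$ and $\Gamma$ is a complete isometry.
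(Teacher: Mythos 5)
Your proposal is correct and follows essentially the same route as the paper's own proof: extract square-summable coefficients by evaluating $f(\{rS_{i,s}\})$ at $\chi_{{\bf g}_0}$, pass to the limit on the dense subspace $\cP$ to verify the defining boundedness condition of $F^\infty({\bf B}_\Lambda)$, invoke Theorem \ref{F} for the SOT-limit and the norm identity, use the inequality $\|f(\{X_{i,s}\})\|\leq\|f(\{rS_{i,s}\})\|$ together with ${\bf B}_\Lambda^\circ(\cH)=\bigcup_{0\leq r<1}r{\bf B}_\Lambda(\cH)$ for the isometry and surjectivity, and pass to matrices for complete isometry. The places where you are more explicit than the paper (Lemma \ref{Ber} for the equivalence with (iii), and the SOT-limit argument for multiplicativity) are accurate fillings-in of steps the paper dispatches with "the rest follows from Theorem \ref{F} and Theorem \ref{fiT}."
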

\begin{proof} Let $f=\sum_{\boldsymbol \beta\in \FF_{n_1}^+\times \cdots \times \FF_{n_k}^+} a_{\boldsymbol \beta} Z_{\boldsymbol \beta}$ be in $H^\infty( {\bf B}_\Lambda^\circ )$. Since
$r{\bf S}\in {\bf B}_\Lambda^\circ (\ell^2 (\FF_{n_1}^+\times \cdots \times \FF_{n_k}^+))$ for all $r\in [0,1)$, the series
$$
f(\{rS_{i,s}\}):=\sum_{p=0}^\infty\sum_{{\boldsymbol\beta=(\beta_1,\ldots, \beta_k)\in  \FF_{n_1}^+\times\cdots \times \FF_{n_k}^+}\atop {|\beta_1|+\cdots +|\beta_k|=p}}  r^{|\beta_1|+\cdots +|\beta_k|}a_{\boldsymbol \beta} {\bf S}_{\boldsymbol \beta}
$$
is convergent in the operator norm topology for all $r\in [0,1)$ and
$M:=\sup_{1\leq r<1}\|f(\{rS_{i,s}\})\|<\infty$. Consequently, for every $r\in [0,1)$ and $\gamma\in
\FF_{n_1}^+\times \cdots \times \FF_{n_k}^+$, we have
$$
 f(\{rS_{i,s}\})(\chi_{\boldsymbol \gamma})=\sum_{(\beta_1,\ldots, \beta_k)\in  \FF_{n_1}^+\times\cdots \times \FF_{n_k}^+} a_{(\beta_1,\ldots, \beta_k)} r^{|\beta_1|+\cdots +|\beta_k|} \boldsymbol \mu(\boldsymbol \beta, \boldsymbol\gamma)\chi_{(\beta_1\gamma_1,\ldots, \beta_k\gamma_k)}
 $$
 and
 $$
 \sum_{(\beta_1,\ldots, \beta_k)\in  \FF_{n_1}^+\times\cdots \times \FF_{n_k}^+} |a_{(\beta_1,\ldots, \beta_k)}|^2 r^{2(|\beta_1|+\cdots +|\beta_k|)}=\|f(\{rS_{i,s}\})(\chi_{g_0})\|^2<M^2.
 $$
  Hence,  $\sum_{(\beta_1,\ldots, \beta_k)\in  \FF_{n_1}^+\times\cdots \times \FF_{n_k}^+} |a_{(\beta_1,\ldots, \beta_k)}|^2<M^2$ and, for every polynomial $p\in \cP$ in $\ell^2( \FF_{n_1}^+\times\cdots \times \FF_{n_k}^+)$, we have
  $f(\{rS_{i,s}\})p\to f(\{S_{i,s}\})p$ as $r\to 1$. Since  $\sup_{1\leq r<1}\|f(\{rS_{i,s}\})\|<\infty$, we deduce that  $\sup_{p\in \cP, \|p\|\leq 1} \|f(\{S_{i,s}\})p\|<\infty$.
Consequently, $\sum_{\boldsymbol \beta\in \FF_{n_1}^+\times \cdots \times \FF_{n_k}^+} a_{\boldsymbol \beta} S_{\boldsymbol \beta}$ is the Fourier series of an element   $A\in F^\infty({\bf B}_\Lambda)$ which, according to Theorem \ref{F}, satisfies the relation
$A=\text{\rm SOT-}\lim_{r\to 1} f(\{rS_{i,s}\}$ and $\|A\|=\sup_{1\leq r<1}\|f(\{rS_{i,s}\})\|$.
This proves that $\Gamma$ is a well-defined  isometric linear map. The fact that $\Gamma$ is surjective is due to Theorem \ref{F} and the fact that
 $
 \|\varphi(\{X_{i,s}\})\|\leq \|\varphi(\{rS_{i,s}\})\|
$
for any $\{X_{i,s}\}\in r {\bf B}_\Lambda(\cH)$. Passing to matrices, we can use similar techniques to show that  $\Gamma$ is a completely isometric isomorphism. The rest of the proof follows from   Theorem \ref{F} and Theorem \ref{fiT}. The proof is complete.
 \end{proof}

Denote by $A( {\bf B}_\Lambda^\circ)$ the set of all functions $f\in  Hol( {\bf B}_\Lambda^\circ )$ such that the map
${\bf B}_\Lambda^\circ (\cH)\ni X\mapsto f(X)\in B(\cH)$
has a continuous extension to ${\bf B}_\Lambda(\cH)$ for every Hilbert space $\cH$.
Using standards arguments, we can show that $A( {\bf B}_\Lambda^\circ)$ is a Banach algebra with pointwise multiplication and the norm $\|\cdot\|_\infty$.
It also has an operator space structure with respect to the norms $\|\cdot\|_m$, $m\in \NN$.
One can prove the following result.

\begin{theorem}
\label{A-inf}
The map $\Gamma: A( {\bf B}_\Lambda^\circ)\to \cA({\bf B}_\Lambda)$
defined by
$$\Gamma\left(\sum_{\boldsymbol \beta\in \FF_{n_1}^+\times \cdots \times \FF_{n_k}^+} a_{\boldsymbol \beta} Z_{\boldsymbol \beta}\right):=\sum_{\boldsymbol \beta\in \FF_{n_1}^+\times \cdots \times \FF_{n_k}^+} a_{\boldsymbol \beta} S_{\boldsymbol \beta}
$$
is a completely isometric isomorphism of operator algebras. Moreover, if  $f\in Hol( {\bf B}_\Lambda^\circ )$, then the following statements are equivalent.
\begin{enumerate}
\item[(i)]  $f\in A( {\bf B}_\Lambda^\circ )$;
\item[(ii)]  $\lim_{r\to 1}f(\{rS_{i,s}\})$ exists in the operator norm topology;
\item[(iii)] there exists $A\in \cA({\bf B}_\Lambda)$ with $f={\bf B}[A]$, where ${\bf B}$ is the noncommutative Berezin transform.
\end{enumerate}
In this case, we have
$$\Gamma(f)=\text{\rm SOT-}\lim_{r\to 1} f(\{rS_{i,s}\}\quad \text{and}\quad \Gamma^{-1}(f)={\bf B}[A].
$$
\end{theorem}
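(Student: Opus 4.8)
The plan is to follow the proof of Theorem~\ref{H-inf} verbatim in structure, replacing $H^\infty({\bf B}_\Lambda^\circ)$ by $A({\bf B}_\Lambda^\circ)$, the Hardy algebra $F^\infty({\bf B}_\Lambda)$ by the $\Lambda$-polyball algebra $\cA({\bf B}_\Lambda)$, and the weak-topology arguments by norm-topology ones. The governing principle is that membership in $A({\bf B}_\Lambda^\circ)$ (continuous extension to the boundary) is the exact analogue, on the function side, of membership in the norm-closed algebra $\cA({\bf B}_\Lambda)$.

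First I would check that $\Gamma$ is a well-defined complete isometry. Given $f\in A({\bf B}_\Lambda^\circ)$, the defining hypothesis is that $X\mapsto f(X)$ extends continuously to ${\bf B}_\Lambda(\cH)$. Since $r{\bf S}\to {\bf S}$ in the tuple norm as $r\to 1$, this continuity forces $\lim_{r\to 1} f(\{rS_{i,s}\})$ to exist in the operator norm, and this limit is $\Gamma(f)$. By Theorem~\ref{F} each $f(\{rS_{i,s}\})$ lies in $\cA({\bf B}_\Lambda)$, and since $\cA({\bf B}_\Lambda)$ is norm-closed, $\Gamma(f)\in\cA({\bf B}_\Lambda)$. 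The isometry $\|\Gamma(f)\|=\|f\|_\infty$ follows from the von Neumann inequality $\|f(\{X_{i,s}\})\|\le\|f(\{rS_{i,s}\})\|$ valid on $r{\bf B}_\Lambda(\cH)$ together with the identity ${\bf B}_\Lambda^\circ(\cH)=\bigcup_{0\le r<1}r{\bf B}_\Lambda(\cH)$; passing to matrix amplifications upgrades this to a complete isometry. For surjectivity I use that $\cA({\bf B}_\Lambda)$ is by definition the norm-closure of $\cP(\{S_{i,s}\})$: each polynomial $p(\{S_{i,s}\})$ is $\Gamma$ of the corresponding polynomial function in $A({\bf B}_\Lambda^\circ)$, and given $A\in\cA({\bf B}_\Lambda)$ with $p_n(\{S_{i,s}\})\to A$ in norm, the preimages form a Cauchy sequence in the Banach algebra $A({\bf B}_\Lambda^\circ)$, whose limit $f$ satisfies $\Gamma(f)=A$.

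For the equivalences, (i)$\Rightarrow$(ii) is exactly the continuity argument above. For (ii)$\Rightarrow$(iii), if $A:=\lim_{r\to1}f(\{rS_{i,s}\})$ exists in norm then $A\in\cA({\bf B}_\Lambda)$ since $\cA$ is norm-closed; and for any $X\in{\bf B}_\Lambda^\circ(\cH)$, writing $X=r_0Y$ with $Y\in{\bf B}_\Lambda(\cH)$ and $r_0\in[0,1)$ (permitted by ${\bf B}_\Lambda^\circ(\cH)=\bigcup_{0\le r<1}r{\bf B}_\Lambda(\cH)$) and applying Lemma~\ref{Ber} to $Y$ yields $f(X)=\varphi(\{r_0Y_{i,s}\})=K_X^*(A\otimes I_\cH)K_X={\bf B}[A](X)$, so $f={\bf B}[A]$.

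The step I expect to be the main obstacle is (iii)$\Rightarrow$(i): showing that $A\in\cA({\bf B}_\Lambda)$ forces the continuous extension of $X\mapsto f(X)$ to the whole closed polyball. Here I would invoke the noncommutative Berezin transform $\Psi_X(A):=\lim_{r\to1}K_{rX}^*(A\otimes I_\cH)K_{rX}$ from the preliminaries, whose restriction to $\cA({\bf B}_\Lambda)$ is a completely contractive homomorphism defined for every $X\in{\bf B}_\Lambda(\cH)$ with $\Psi_X(p)=p(\{X_{i,s}\})$ on polynomials. On the interior, Lemma~\ref{Ber} and norm-continuity of the holomorphic series give $\Psi_X(A)=\lim_{r\to1}\varphi(\{rX_{i,s}\})=f(X)={\bf B}[A](X)$, so $X\mapsto\Psi_X(A)$ extends $f$. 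Continuity of this extension on all of ${\bf B}_\Lambda(\cH)$ is obtained by approximating $A$ in norm by polynomials $p_n$: each $X\mapsto p_n(\{X_{i,s}\})$ is continuous by inspection, and the uniform estimate $\|\Psi_X(A)-\Psi_X(p_n)\|\le\|A-p_n\|$ (complete contractivity of $\Psi_X$) gives uniform convergence, hence continuity of the limit. This produces the required boundary extension, so $f\in A({\bf B}_\Lambda^\circ)$; the formulas $\Gamma(f)=\text{\rm SOT-}\lim_{r\to1}f(\{rS_{i,s}\})$ and $\Gamma^{-1}(f)={\bf B}[A]$ then follow as in Theorem~\ref{H-inf}.
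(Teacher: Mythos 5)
Your proposal is correct and takes essentially the same approach as the paper: the paper's own proof is only the one-line sketch ``Using Theorem \ref{H-inf}, Theorem 4.9 from \cite{Po-twisted}, and an approximation argument,'' and your argument supplies exactly those ingredients --- the proof of Theorem \ref{H-inf} transplanted to the norm topology, the completely contractive Berezin-transform homomorphism $\Psi_X$ on $\cA({\bf B}_\Lambda)$ quoted in Section 1 from \cite{Po-twisted}, and polynomial approximation. In particular, your treatment of (iii)$\Rightarrow$(i) --- extending $f$ to ${\bf B}_\Lambda(\cH)$ by $X\mapsto\Psi_X(A)$ and deducing continuity of the extension from the uniform estimate $\|\Psi_X(A)-p_n(\{X_{i,s}\})\|\le\|A-p_n(\{S_{i,s}\})\|$ --- is precisely the ``approximation argument'' the paper alludes to.
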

\begin{proof} Using Theorem \ref{H-inf}, Theorem 4.9  from \cite{Po-twisted}, and an approximation argument, one can complete the proof.
\end{proof}

\bigskip
\section{Characteristic functions and multi-analytic  models}
In this section, we characterize  the elements   in  the noncommutative $\Lambda$-polyball  which  admit a characteristic functions. We provide a model theorem for the class of    completely non-coisometric $k$-tuple  of operators  in  
${\bf B}_\Lambda(\cH)$ which  admit characteristic functions, and     show  that the  characteristic function
   is a complete unitary invariant for this class of $k$-tuples.

An operator $A:\ell^2(\FF_{n_1}^+\times\cdots \times \FF_{n_k}^+)\otimes \cH\to \ell^2(\FF_{n_1}^+\times\cdots \times \FF_{n_k}^+)\otimes \cK$ is called {\it multi-analytic}
 with respect to the universal model ${\bf S}=({\bf S}_1,\ldots, {\bf S}_k)$, ${\bf S}_i=(S_{i,1},\ldots, S_{i,n_i})$,  associated with the $\Lambda$-polyball ${\bf B}_\Lambda$
if
$$
A(S_{i,s}\otimes I_\cH)=(S_{i,s}\otimes I_\cK)A
$$
for every $i\in\{1,\ldots, k\}$ and  $s\in \{1,\ldots, n_i\}$. If, in addition,  $A$ is a partial isometry, we call it  an {\it inner multi-analytic} operator.
The support of $A$ is the smallest reducing subspace $\supp (A)\subset \ell^2(\FF_{n_1}^+\times\cdots \times \FF_{n_k}^+)\otimes \cH$ under all the operators $S_{i,s}$, containing the co-invariant subspace $\overline{A^*(\ell^2(\FF_{n_1}^+\times\cdots \times \FF_{n_k}^+)\otimes \cK)}$. According to Theorem 5.1 from \cite{Po-twisted}, we have
$$
\supp (A)=\ell^2(\FF_{n_1}^+\times\cdots \times \FF_{n_k}^+)\otimes \cL,
$$
where $\cL:=({\bf P}_\CC\otimes I_\cH)\overline{A^*(\ell^2(\FF_{n_1}^+\times\cdots \times \FF_{n_k}^+)\otimes \cK)}$ and  ${\bf P}_\CC$ is the orthogonal projection of
$\ell^2(\FF_{n_1}^+\times\cdots \times \FF_{n_k}^+)$ onto $\CC$ which is identified to the subspace $\CC\chi_{(g_1^0,\ldots g_k^0)}$ of
$\ell^2(\FF_{n_1}^+\times\cdots \times \FF_{n_k}^+)$.

In \cite{Po-twisted}, we proved the following Beurling type factorization result.

\begin{theorem} \label{Beurling-fac}
Let ${\bf S}=({\bf S}_1,\ldots, {\bf S}_k)$ be the universal model associated with  the $\Lambda$-polyball and let $Y$ be a selfadjoint operator on the Hilbert space $\ell^2(\FF_{n_1}^+\times\cdots \times \FF_{n_k}^+)\otimes \cK$. Then the following statements are equivalent.
\begin{enumerate}
\item[(i)] There is a multi-analytic operator $A: \ell^2(\FF_{n_1}^+\times\cdots \times \FF_{n_k}^+)\otimes \cL\to
\ell^2(\FF_{n_1}^+\times\cdots \times \FF_{n_k}^+)\otimes \cK$ such that
$$Y=AA^*.$$

\item[(ii)] $(id-\Phi_{{\bf S}_1\otimes I_\cK})\circ\cdots \circ (id-\Phi_{{\bf S}_k\otimes I_\cK})(Y)\geq 0$, where the completely positive maps $\Phi_{{\bf S}_i\otimes I_\cK}$ are defined in Section 1.
\end{enumerate}
\end{theorem}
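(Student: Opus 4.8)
The plan is to prove the two implications separately: the forward one by a direct intertwining computation, and the converse by building a Berezin-type kernel whose adjoint is the sought multi-analytic operator. Throughout I use that the maps $id-\Phi_{{\bf S}_i\otimes I}$ commute (by the remark following \eqref{commuting}), so the order in (ii) is immaterial, and I abbreviate $\Delta_{{\bf S}\otimes I_\cK}(Y):=(id-\Phi_{{\bf S}_1\otimes I_\cK})\circ\cdots\circ(id-\Phi_{{\bf S}_k\otimes I_\cK})(Y)$.

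For $(i)\Rightarrow(ii)$, suppose $Y=AA^*$ with $A(S_{i,s}\otimes I_\cL)=(S_{i,s}\otimes I_\cK)A$. I would first record the identity
$$\Phi_{{\bf S}_i\otimes I_\cK}(ABA^*)=A\,\Phi_{{\bf S}_i\otimes I_\cL}(B)\,A^*,$$
obtained by substituting the intertwining relation into $\sum_s(S_{i,s}\otimes I_\cK)ABA^*(S_{i,s}\otimes I_\cK)^*$. Iterating over the commuting factors $id-\Phi_{{\bf S}_i\otimes I}$ gives
$$\Delta_{{\bf S}\otimes I_\cK}(AA^*)=A\left[\Delta_{{\bf S}\otimes I_\cL}(I)\right]A^*=A\left(\Delta_{\bf S}(I)\otimes I_\cL\right)A^*.$$
Since ${\bf S}$ is pure one has $\Phi_{{\bf S}_i}(I)=\sum_s S_{i,s}S_{i,s}^*=$ projection onto $\{|\alpha_i|\ge1\}$, whence $\Delta_{\bf S}(I)={\bf P}_\CC\ge0$; thus the right-hand side is of the form $ACA^*$ with $C\ge0$ and is positive, which is (ii).

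For $(ii)\Rightarrow(i)$ the heart is a weighted reconstruction identity. Put $\Delta:=\Delta_{{\bf S}\otimes I_\cK}(Y)\ge0$ and $D:=\Delta^{1/2}$. Rerunning the telescoping of the proof of Theorem \ref{Berezin} with $Y$ in place of $I$, and using purity of ${\bf S}$ in the form $\Phi_{{\bf S}_i\otimes I}^{p}(Z)\to0$ strongly for every bounded selfadjoint $Z$ (sandwich $Z$ between $\pm\|Z\|\,\Phi_{{\bf S}_i\otimes I}^p(I)$), I would obtain
$$\sum_{\boldsymbol\beta\in\FF_{n_1}^+\times\cdots\times\FF_{n_k}^+}({\bf S}_{\boldsymbol\beta}\otimes I)\,\Delta\,({\bf S}_{\boldsymbol\beta}\otimes I)^*=Y,$$
the series converging in the weak operator topology. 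Here the hypothesis $\Delta\ge0$ makes every summand positive, so the rearrangement of the iterated sums $\sum\Phi_{{\bf S}_1\otimes I}^{q_1}\circ\cdots\circ\Phi_{{\bf S}_k\otimes I}^{q_k}(\Delta)$ into a single sum over $\boldsymbol\beta$ is legitimate exactly as in Theorem \ref{Berezin}; in particular $Y\ge0$. Now set $\cL:=\ol{D(\ell^2(\FF_{n_1}^+\times\cdots\times\FF_{n_k}^+)\otimes\cK)}$ and define
$$W:\ell^2(\FF_{n_1}^+\times\cdots\times\FF_{n_k}^+)\otimes\cK\to\ell^2(\FF_{n_1}^+\times\cdots\times\FF_{n_k}^+)\otimes\cL,\qquad W\eta:=\sum_{\boldsymbol\beta}\chi_{\boldsymbol\beta}\otimes D({\bf S}_{\boldsymbol\beta}\otimes I)^*\eta.$$
The reconstruction identity yields $\|W\eta\|^2=\sum_{\boldsymbol\beta}\langle({\bf S}_{\boldsymbol\beta}\otimes I)\Delta({\bf S}_{\boldsymbol\beta}\otimes I)^*\eta,\eta\rangle=\langle Y\eta,\eta\rangle$, so $W$ is bounded with $W^*W=Y$. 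A phase-bookkeeping computation identical in form to the proof of Theorem \ref{Berezin}(ii) (moving $S_{i,s}$ into the $i$-th slot produces the same factor $\boldsymbol{\lambda}_{i,1}(s,\alpha_1)\cdots\boldsymbol{\lambda}_{i,i-1}(s,\alpha_{i-1})$ on both sides) then gives
$$W(S_{i,s}^*\otimes I_\cK)=(S_{i,s}^*\otimes I_\cL)W,\qquad i\in\{1,\ldots,k\},\ s\in\{1,\ldots,n_i\}.$$
Taking adjoints, $A:=W^*$ satisfies $A(S_{i,s}\otimes I_\cL)=(S_{i,s}\otimes I_\cK)A$, so $A$ is multi-analytic, and $AA^*=W^*W=Y$, which is (i).

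The main obstacle is the weighted reconstruction identity. Unlike the $Y=I$ case of Theorem \ref{Berezin}, the telescoping must be carried out with $Y$ threaded through the intermediate defects $\Delta_{(T_j,\ldots,T_1)}(Y)$, and one must check that purity of ${\bf S}$ still forces the tail limits $\lim_p\Phi_{{\bf S}_i\otimes I}^{p+1}(\cdot)$ to vanish even on non-positive operators; the passage from the iterated sums to $\sum_{\boldsymbol\beta}$ is precisely where the standing hypothesis $\Delta\ge0$ is used. Once this identity is established, the boundedness of $W$, the relation $W^*W=Y$, and the intertwining are routine, the last being a direct transcription of the computation already performed for the Berezin kernel $K_T$.
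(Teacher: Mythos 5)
Your proposal is correct, and while your proof of (i)$\Rightarrow$(ii) is exactly the computation the paper itself uses in this situation (see the first part of the proof of Theorem \ref{charact}, where $\Delta_{{\bf S}\otimes I}(\Psi\Psi^*)=\Psi({\bf P}_\CC\otimes I)\Psi^*\geq 0$ is derived from multi-analyticity), your proof of (ii)$\Rightarrow$(i) takes a genuinely different route from the one the paper recalls from \cite{Po-twisted}. The paper's construction is indirect: on $\cG:=\overline{Y^{1/2}\left(\ell^2(\FF_{n_1}^+\times\cdots\times\FF_{n_k}^+)\otimes\cK\right)}$ it defines $C_{i,s}(Y^{1/2}g):=Y^{1/2}(S_{i,s}^*\otimes I_\cK)g$, verifies that the tuple $M$ with $M_{i,s}:=C_{i,s}^*$ is a \emph{pure} element of ${\bf B}_\Lambda(\cG)$, and then sets $A:=Y^{1/2}K_M^*$, obtaining $AA^*=Y$ from the fact that $K_M$ is an isometry and multi-analyticity from $K_MM_{i,s}^*=(S_{i,s}^*\otimes I)K_M$. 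You instead stay on the original space and produce the factor directly as the adjoint of a $Y$-weighted Berezin kernel, with boundedness and $W^*W=Y$ coming from the reconstruction identity $\sum_{\boldsymbol\beta}({\bf S}_{\boldsymbol\beta}\otimes I)\,\Delta\,({\bf S}_{\boldsymbol\beta}\otimes I)^*=Y$, $\Delta:=\Delta_{{\bf S}\otimes I_\cK}(Y)$. Your three key steps are all sound: the telescoping works for any bounded selfadjoint $Y$ because the tails satisfy $-\|Z\|\Phi_{{\bf S}_i\otimes I}^{p}(I)\leq\Phi_{{\bf S}_i\otimes I}^{p}(Z)\leq\|Z\|\Phi_{{\bf S}_i\otimes I}^{p}(I)\to 0$ by purity of ${\bf S}$; the rearrangement into a single sum over $\boldsymbol\beta$ uses positivity of $\Delta$, i.e.\ hypothesis (ii), exactly as in Theorem \ref{Berezin}(i); and the intertwining computation of Theorem \ref{Berezin}(ii) never moves the weight through anything, so it applies verbatim with $\Delta^{1/2}$ in place of $\Delta_T(I)^{1/2}$. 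As for what each approach buys: the paper's reuses Theorem \ref{Berezin} as a black box once $M$ is in hand, but the nontrivial work (well-definedness and boundedness of the $C_{i,s}$, regularity and purity of $M$) is hidden in ``one can see''; yours avoids the auxiliary tuple entirely at the price of re-running the telescoping of Theorem \ref{Berezin}(i) with $Y$ threaded through. The two constructions are reconciled by the identity $Y^{1/2}\Delta_M(I)Y^{1/2}=\Delta_{{\bf S}\otimes I_\cK}(Y)$, which shows that your $A=W^*$ and the paper's $A=Y^{1/2}K_M^*$ coincide in the sense of Lemma \ref{fifi*}.
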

We recall  \cite{Po-twisted} the construction of the operator $A$ in part (i) of Theorem \ref{Beurling-fac} .
Consider the subspace $\cG:=\overline{Y^{1/2}\left(\ell^2(\FF_{n_1}^+\times\cdots \times \FF_{n_k}^+)\otimes \cK\right)}$ and set
\begin{equation*}
C_{i,s}(Y^{1/2} g):=Y^{1/2}(S_{i,s}^*\otimes I_\cK)g
\end{equation*}
for every $g\in \ell^2(\FF_{n_1}^+\times\cdots \times \FF_{n_k}^+)\otimes \cK$, $i\in \{1,\ldots, k\}$ and $s\in \{1,\ldots, n_i\}$.
The operator  $C_{i,s}$ is well-defined on the range of $Y^{1/2}$ and
can be extended  by continuity   to the space $\cG$. Setting $M_{i,s}:=C_{i,s}^*$, we note that
$M=(M_1,\ldots, M_k)$, where $M_i=(M_{i,1},\ldots, M_{i,n_i})$,  is a pure element   in the regular $\Lambda$-polyball ${\bf B}_\Lambda(\cG)$.
Consequently, the associated noncommutative Berezin kernel $K_M:\cG\to \ell^2(\FF_{n_1}^+\times\cdots \times \FF_{n_k}^+)\otimes \overline{\Delta_M(I)\cG}$
is an isometry and
$$
K_M M_{i,s}^*=\left(S_{i,s}^*\otimes I_\cG\right) K_M
$$
for every $i \in \{1,\ldots, k\}$ and  $s\in \{1,\ldots, n_i\}$.
One can see that
the map
$$A:=Y^{1/2}K_M^*:\ell^2(\FF_{n_1}^+\times\cdots \times \FF_{n_k}^+)\otimes \overline{\Delta_M(I)\cG}\to\ell^2(\FF_{n_1}^+\times\cdots \times \FF_{n_k}^+)\otimes \cK
$$
is a multi-analytic operator and $Y=AA^*$.

Following the classical result of Beurling \cite{Be}, we say that $\cM\subset \ell^2(\FF_{n_1}^+\times\cdots \times \FF_{n_k}^+)\otimes \cK$   is a {\it Beurling type jointly invariant subspace} under the operators $S_{i,s}\otimes I_\cK$, where $i\in\{1,\ldots, k\}$ and  $s\in \{1,\ldots, n_i\}$,  if there is an inner multi-analytic operator
$\Psi: \ell^2(\FF_{n_1}^+\times\cdots \times \FF_{n_k}^+)\otimes \cL\to
\ell^2(\FF_{n_1}^+\times\cdots \times \FF_{n_k}^+)\otimes \cK$ such that
$$
\cM=\Psi \left(\ell^2(\FF_{n_1}^+\times\cdots \times \FF_{n_k}^+)\otimes \cL\right).
$$
In what follows, we use the notation $\left(({\bf S}_1\otimes I_\cK)|_\cM,\ldots, ({\bf S}_k\otimes I_\cK) |_\cM\right)$, where $$({\bf S}_i\otimes I_\cK)|_\cM:=((S_{i,1}\otimes I_\cK)|_\cM,\ldots, (S_{i,n_i}\otimes I_\cK)|_\cM), \qquad i\in \{1,\ldots, k\}.
$$

We proved in \cite{Po-twisted} the following characterization of the Beurling type jointly invariant subspaces under the universal model of the regular $\Lambda$-polyball.

\begin{theorem} \label{Beurling}  Let $\cM\subset \ell^2(\FF_{n_1}^+\times\cdots \times \FF_{n_k}^+)\otimes \cK$   be a jointly invariant subspace under $S_{i,s}\otimes I_\cK$, where $i\in\{1,\ldots, k\}$ and  $s\in \{1,\ldots, n_i\}$. Then the following statements are equivalent.
\begin{enumerate}
\item[(i)] $\cM$ is a Beurling type  jointly invariant subspace.

\item[(ii)]  $(id-\Phi_{{\bf S}_1\otimes I_\cK})\circ\cdots \circ (id-\Phi_{{\bf S}_k\otimes I_\cK})({\bf P}_\cM)\geq 0$, where ${\bf P}_\cM$ is the orthogonal projection onto $\cM$.

\item[(iii)] The  $k$-tuple $\left(({\bf S}_1\otimes I_\cK)|_\cM,\ldots, ({\bf S}_k\otimes I_\cK) |_\cM\right)$ is doubly $\Lambda$-commuting.
\item[(iv)] There is an isometric multi-analytic operator
$\Psi: \ell^2(\FF_{n_1}^+\times\cdots \times \FF_{n_k}^+)\otimes \cL\to
\ell^2(\FF_{n_1}^+\times\cdots \times \FF_{n_k}^+)\otimes \cK$ such that
$$
\cM=\Psi \left(\ell^2(\FF_{n_1}^+\times\cdots \times \FF_{n_k}^+)\otimes \cL\right).
$$
\end{enumerate}
\end{theorem}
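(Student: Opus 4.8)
The plan is to prove the four equivalences by combining two already-available structural results: the Beurling-type factorization of Theorem~\ref{Beurling-fac}, and the Wold decomposition together with the support description (Theorem~5.1 of \cite{Po-twisted}) for doubly $\Lambda$-commuting row isometries. Throughout write $\cE:=\ell^2(\FF_{n_1}^+\times\cdots\times\FF_{n_k}^+)$. Two elementary facts organize the argument. First, if $\Psi$ is an inner multi-analytic operator with $\cM=\Psi(\cE\otimes\cL)$, then, being a partial isometry with range $\cM$, it satisfies $\Psi\Psi^*={\bf P}_\cM$. Second, since any multi-analytic $\Psi$ obeys $(S_{i,s}\otimes I_\cK)\Psi=\Psi(S_{i,s}\otimes I_\cL)$, conjugation by $\Psi$ transports the defining completely positive maps, $\Phi_{{\bf S}_i\otimes I_\cK}(\Psi X\Psi^*)=\Psi\,\Phi_{{\bf S}_i\otimes I_\cL}(X)\,\Psi^*$, and hence, iterating over $i=1,\dots,k$,
\[
(id-\Phi_{{\bf S}_1\otimes I_\cK})\circ\cdots\circ(id-\Phi_{{\bf S}_k\otimes I_\cK})(\Psi X\Psi^*)
=\Psi\left[(id-\Phi_{{\bf S}_1\otimes I_\cL})\circ\cdots\circ(id-\Phi_{{\bf S}_k\otimes I_\cL})(X)\right]\Psi^*.
\]

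Three of the implications are then immediate. The implication (iv)$\Rightarrow$(i) holds because an isometry is a partial isometry. For (i)$\Rightarrow$(ii), apply the transport identity with $X=I$ and $\Psi\Psi^*={\bf P}_\cM$; the inner bracket becomes $\Delta_{{\bf S}\otimes I_\cL}(I)={\bf P}_\CC\otimes I_\cL\ge 0$ (the vacuum projection, since the universal shift is pure), so the left-hand side equals $\Psi({\bf P}_\CC\otimes I_\cL)\Psi^*\ge 0$, which is (ii). For (iv)$\Rightarrow$(iii), note that the isometric $\Psi$ is a unitary of $\cE\otimes\cL$ onto $\cM$ intertwining $S_{i,s}\otimes I_\cL$ with $(S_{i,s}\otimes I_\cK)|_\cM$, so the compressed tuple is unitarily equivalent to the doubly $\Lambda$-commuting tuple ${\bf S}\otimes I_\cL$, hence is itself doubly $\Lambda$-commuting.

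Next I re-enter the group $\{$(iii),(iv)$\}$. Write $V_{i,s}:=(S_{i,s}\otimes I_\cK)|_\cM$; since $\cM$ is invariant, $V$ is automatically a $k$-tuple of $\Lambda$-commuting row isometries, and a direct compression computation using invariance of $\cM$ gives
\[
(id-\Phi_{{\bf S}_1\otimes I_\cK})\circ\cdots\circ(id-\Phi_{{\bf S}_k\otimes I_\cK})({\bf P}_\cM)=\Delta_V(I_\cM)\oplus 0
\]
relative to $\cE\otimes\cK=\cM\oplus\cM^{\perp}$ (the order of the factors is immaterial because $\Lambda$-commutativity makes the $\Phi_{V_i}$ commute); when $V$ is in addition doubly $\Lambda$-commuting this is $\ge 0$, yielding (iii)$\Rightarrow$(ii) as a cross-check. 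For (iii)$\Rightarrow$(iv) I would observe that $V$ is \emph{pure} (it is the restriction of the pure tuple ${\bf S}\otimes I_\cK$ to the invariant subspace $\cM$, so $\Phi_{V_i}^p(I_\cM)\to 0$) and, by hypothesis (iii), doubly $\Lambda$-commuting; the Wold classification of \cite{Po-twisted} then supplies a unitary $W:\cM\to\cE\otimes\cL$ with $WV_{i,s}=(S_{i,s}\otimes I_\cL)W$, and $\Psi:=\iota\circ W^{-1}$, with $\iota:\cM\hookrightarrow\cE\otimes\cK$, is the required isometric multi-analytic operator with $\cM=\Psi(\cE\otimes\cL)$.

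Finally, for (ii)$\Rightarrow$(iv) I would apply Theorem~\ref{Beurling-fac} to $Y:={\bf P}_\cM$: condition (ii) is exactly its hypothesis (ii), so it produces a multi-analytic $A$ with $AA^*={\bf P}_\cM$; since ${\bf P}_\cM$ is a projection, $A$ is automatically a partial isometry with range $\cM$, which is the inner description (i). The main obstacle is the upgrade from this inner (partially isometric) operator to an \emph{isometric} intertwiner, equivalently the verification that the positivity (ii) forces the compression $V$ to be doubly $\Lambda$-commuting. This is the genuine content of the theorem — the exact analogue of the classical fact that a shift-invariant subspace is of Beurling type precisely when its compression is doubly commuting — and I expect to resolve it by restricting $A$ to its support $\supp(A)=\cE\otimes\cL$ (Theorem~5.1 of \cite{Po-twisted}), checking that (ii) makes this restriction isometric (i.e. its initial space reducing), and then reading off doubly $\Lambda$-commutativity of $V$ from the resulting unitary equivalence $V\cong{\bf S}\otimes I_\cL$. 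Together with the implications (iv)$\Rightarrow$(i), (i)$\Rightarrow$(ii), (iv)$\Rightarrow$(iii) and (iii)$\Rightarrow$(iv) already obtained, this single step (ii)$\Rightarrow$(iv) closes the equivalence of all four conditions.
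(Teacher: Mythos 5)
First, a point of comparison: this paper does not actually prove Theorem \ref{Beurling} --- it is stated as a result imported from \cite{Po-twisted} (``We proved in \cite{Po-twisted} the following characterization\dots''), so there is no internal proof to measure you against; I assess your proposal on its own merits.

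Most of your architecture is correct and fully executed. The implication (iv)$\Rightarrow$(i) is indeed trivial; (i)$\Rightarrow$(ii) via the transport identity $\Delta_{{\bf S}\otimes I_\cK}(\Psi X\Psi^*)=\Psi\,\Delta_{{\bf S}\otimes I_\cL}(X)\,\Psi^*$ together with $\Delta_{{\bf S}\otimes I_\cL}(I)={\bf P}_\CC\otimes I_\cL$ is exactly the computation the paper itself performs in the proof of Theorem \ref{charact}; (iv)$\Rightarrow$(iii) by unitary equivalence is correct; and (iii)$\Rightarrow$(iv) is legitimately reduced to the Wold classification of \cite{Po-twisted}, since the restricted tuple $V$ is a $k$-tuple of row isometries that is pure (because $\Phi_{V_i}^p(I_\cM)\leq {\bf P}_\cM\Phi_{{\bf S}_i\otimes I}^p(I)|_\cM\to 0$) and, by hypothesis, doubly $\Lambda$-commuting.

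The one genuine gap is the step you yourself flag, (ii)$\Rightarrow$(iv): you apply Theorem \ref{Beurling-fac} to $Y={\bf P}_\cM$ to get a multi-analytic partial isometry $A$ with $AA^*={\bf P}_\cM$, but the upgrade from this partial isometry to an \emph{isometric} multi-analytic operator is left as ``I expect to resolve it by\dots'' rather than proved, and your framing (``checking that (ii) makes this restriction isometric'') misplaces where (ii) enters: positivity (ii) is used only to invoke Theorem \ref{Beurling-fac}; after that, the upgrade is automatic and needs no further input. Concretely: the initial space $\cI:=\overline{A^*\left(\ell^2(\FF_{n_1}^+\times\cdots\times\FF_{n_k}^+)\otimes \cK\right)}$ of the partial isometry $A$ is reducing for the ambient operators $S_{i,s}\otimes I$. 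It is co-invariant because taking adjoints in the intertwining relation gives $(S_{i,s}\otimes I)^*A^*=A^*(S_{i,s}\otimes I)^*$; it is invariant because a vector $x$ lies in $\cI$ if and only if $\|Ax\|=\|x\|$, and for $x\in\cI$ one has $\|A(S_{i,s}\otimes I)x\|=\|(S_{i,s}\otimes I)Ax\|=\|Ax\|=\|x\|=\|(S_{i,s}\otimes I)x\|$. Hence $\cI=\supp(A)$, which by Theorem 5.1 of \cite{Po-twisted} has the form $\ell^2(\FF_{n_1}^+\times\cdots\times\FF_{n_k}^+)\otimes\cL$; the restriction $\Psi:=A|_{\cI}$ is then an isometric multi-analytic operator whose range equals the range of $AA^*={\bf P}_\cM$, i.e. $\cM$, which is (iv). With this short argument inserted, your cycle (i)$\Rightarrow$(ii)$\Rightarrow$(iv)$\Rightarrow$(i), together with (iii)$\Leftrightarrow$(iv), closes the theorem; note also that this makes your separate (iii)$\Rightarrow$(ii) ``cross-check'' redundant, as you suspected.
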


  We say that  two
multi-analytic operators $A:\ell^2(\FF_{n_1}^+\times\cdots \times \FF_{n_k}^+)\otimes
 \cK_1 \to \ell^2(\FF_{n_1}^+\times\cdots \times \FF_{n_k}^+)\otimes \cK_2$ and
 $A':\ell^2(\FF_{n_1}^+\times\cdots \times \FF_{n_k}^+)\otimes \cK_1' \to
  \ell^2(\FF_{n_1}^+\times\cdots \times \FF_{n_k}^+)\otimes \cK_2'$  coincide if there are two unitary operators $u_j\in
B(\cK_j, \cK_j')$, $j=1,2$,  such that
$$
A'(I_{\ell^2(\FF_{n_1}^+\times\cdots \times \FF_{n_k}^+)}\otimes u_1)
=(I_{\ell^2(\FF_{n_1}^+\times\cdots \times \FF_{n_k}^+)}\otimes u_2) A.
$$

\begin{lemma}\label{fifi*}   Let $A_s:\ell^2(\FF_{n_1}^+\times\cdots \times \FF_{n_k}^+)\otimes \cH_s\to
\ell^2(\FF_{n_1}^+\times\cdots \times \FF_{n_k}^+)\otimes \cK$, \ $s=1,2$,    be  multi-analytic operators   with respect to ${\bf S}:=({\bf S}_1,\ldots, {\bf S}_k)$ such that
$
A_1 A_1^*=A_2 A_2^*.
$
Then there is a unique partial isometry $V:\cH_1\to \cH_2$ such that
$$A_1=A_2(I_{\ell^2(\FF_{n_1}^+\times\cdots \times \FF_{n_k}^+)}\otimes V),
$$
where $I_{\ell^2(\FF_{n_1}^+\times\cdots \times \FF_{n_k}^+)}\otimes V $ is an inner multi-analytic operator  with initial space $\supp (A_1)$ and  final space $\supp (A_2)$.
In particular,  the  multi-analytic operators $A_1|_{\supp (A_1)}$ and $A_2|_{\supp (A_2)}$ coincide.
\end{lemma}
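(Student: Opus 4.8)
The plan is to manufacture the partial isometry $V$ from the operator equality $A_1A_1^*=A_2A_2^*$ and then to recognize it as a constant (coefficient‑space) inner symbol. First I would use the norm identity this equality forces: for every $g\in\ell^2(\FF_{n_1}^+\times\cdots\times\FF_{n_k}^+)\otimes\cK$,
$\|A_1^*g\|^2=\langle A_1A_1^*g,g\rangle=\langle A_2A_2^*g,g\rangle=\|A_2^*g\|^2$.
Hence $A_1^*g\mapsto A_2^*g$ is a well-defined surjective isometry from $\overline{\ran A_1^*}$ onto $\overline{\ran A_2^*}$; extending it by $0$ on $\ker A_1=(\overline{\ran A_1^*})^\perp$ yields a partial isometry $\Phi$ with initial space $\overline{\ran A_1^*}$ and final space $\overline{\ran A_2^*}$ satisfying $A_2^*=\Phi A_1^*$, equivalently $A_2=A_1\Phi^*$.

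Next I would record the intertwining with the backward shifts. Since each $A_s$ is multi-analytic, taking adjoints in $A_s(S_{i,j}\otimes I_{\cH_s})=(S_{i,j}\otimes I_\cK)A_s$ gives $(S_{i,j}^*\otimes I_{\cH_s})A_s^*=A_s^*(S_{i,j}^*\otimes I_\cK)$, and combining the $s=1,2$ relations through $\Phi A_1^*=A_2^*$ shows $\Phi(S_{i,j}^*\otimes I_{\cH_1})=(S_{i,j}^*\otimes I_{\cH_2})\Phi$ on $\overline{\ran A_1^*}$. Because $\overline{\ran A_s^*}$ is coinvariant, the backward shifts restrict to it, so $\Phi$ restricts to a unitary $\Phi_0\colon\overline{\ran A_1^*}\to\overline{\ran A_2^*}$ intertwining these restricted backward shifts.

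The crux, and the step I expect to be the main obstacle, is to push $\Phi_0$ across the supports and identify the result as a constant symbol. By Theorem 5.1 of \cite{Po-twisted} the supports are $\supp(A_s)=\ell^2(\FF_{n_1}^+\times\cdots\times\FF_{n_k}^+)\otimes\cL_s$, with $\cL_s=({\bf P}_\CC\otimes I_{\cH_s})\overline{\ran A_s^*}$, and $\overline{\ran A_s^*}$ is the coinvariant, $*$-cyclic subspace generating $\ell^2(\cdots)\otimes\cL_s$ as a reducing subspace. Using coinvariance together with the backward-shift intertwining, the standard extension argument — reducing each inner product $\langle(S_{\boldsymbol\beta}\otimes I)\xi,(S_{\boldsymbol\gamma}\otimes I)\eta\rangle$, via the doubly $\Lambda$-commuting relations \eqref{lac-S}, to the action of the backward shifts inside $\overline{\ran A_s^*}$ where $\Phi_0$ already intertwines — extends $\Phi_0$ uniquely to a unitary $U\colon\ell^2(\cdots)\otimes\cL_1\to\ell^2(\cdots)\otimes\cL_2$ that now intertwines the forward shifts $S_{i,j}\otimes I$. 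A unitary intertwining the forward shifts automatically intertwines their adjoints as well, so $U$ commutes with $C^*(\{S_{i,j}\})\otimes\CC I$; since $C^*(\{S_{i,j}\})$ acts irreducibly on $\ell^2(\FF_{n_1}^+\times\cdots\times\FF_{n_k}^+)$, this forces $U=I_{\ell^2(\cdots)}\otimes V_0$ for a unitary $V_0\colon\cL_1\to\cL_2$.

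Finally I would set $V:=V_0{\bf P}_{\cL_1}\colon\cH_1\to\cH_2$, a partial isometry with initial space $\cL_1$ and final space $\cL_2$, so that $I\otimes V$ is an inner multi-analytic operator with initial space $\supp(A_1)$ and final space $\supp(A_2)$. On $\overline{\ran A_1^*}\subseteq\ell^2(\cdots)\otimes\cL_1$ the operator $I\otimes V$ agrees with $U=\Phi_0=\Phi$, hence $A_2^*=\Phi A_1^*=(I\otimes V)A_1^*$, giving $A_2=A_1(I\otimes V^*)$; therefore $A_2(I\otimes V)=A_1(I\otimes V^*V)=A_1(I\otimes{\bf P}_{\cL_1})=A_1$, the last equality because $\ran A_1^*\subseteq\supp(A_1)$. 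For uniqueness, if $V'$ satisfies the same conclusion then $(I\otimes(V-V'))A_1^*=0$, so the operator $I\otimes(V-V')$, which commutes with the shifts and their adjoints, annihilates the $*$-cyclic subspace $\overline{\ran A_1^*}$ and hence all of $\ell^2(\cdots)\otimes\cL_1$; as both symbols vanish on $\cL_1^\perp$ we get $V=V'$. The concluding coincidence statement is then immediate by choosing the unitaries $u_1:=V_0$ and $u_2:=I_\cK$ in the definition of coinciding multi-analytic operators.
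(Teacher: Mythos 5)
Your proof is correct, but it takes a genuinely different route from the paper's. The paper never forms the global map $A_1^*g\mapsto A_2^*g$: instead it uses the identity $(id-\Phi_{{\bf S}_1})\circ\cdots\circ(id-\Phi_{{\bf S}_k})(I)={\bf P}_\CC$ together with multi-analyticity to write $\Delta_{{\bf S}\otimes I}(A_sA_s^*)=A_s({\bf P}_\CC\otimes I_{\cH_s})A_s^*$, which combined with $A_1A_1^*=A_2A_2^*$ yields $\|({\bf P}_\CC\otimes I_{\cH_1})A_1^*f\|=\|({\bf P}_\CC\otimes I_{\cH_2})A_2^*f\|$ for every $f$; the unitary is then defined directly between the coefficient spaces $\cL_1\to\cL_2$ by $({\bf P}_\CC\otimes I_{\cH_1})A_1^*f\mapsto({\bf P}_\CC\otimes I_{\cH_2})A_2^*f$, extended by zero to the partial isometry $V$, and multi-analyticity propagates the vacuum-level relation $A_1V^*=A_2|_{\CC\otimes\cH_2}$ to the full identity $A_1(I\otimes V^*)=A_2$. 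You instead build the full intertwiner $\Phi$, extend it Wold-style from the coinvariant subspace $\overline{\ran A_1^*}$ to the reducing subspace $\supp(A_1)$, and identify the extension as a constant symbol $I\otimes V_0$ via irreducibility. What your route buys: an explicit uniqueness argument (the paper's proof does not address uniqueness of $V$ at all) and a soft scheme that works whenever the model algebra acts irreducibly and the ranges of $A_s^*$ are $*$-cyclic for the supports. What the paper's route buys: brevity and self-containedness, since everything follows from the single computation with ${\bf P}_\CC$, with no appeal to irreducibility or to an extension lemma. Two points you should make explicit to close your sketch: (i) the well-definedness and density claims in your extension step require that the closed span of the forward translates $(S_{\boldsymbol\beta}\otimes I)\overline{\ran A_s^*}$ be reducing (hence equal to $\supp(A_s)$), and the reduction of $S_{\boldsymbol\gamma}^*S_{\boldsymbol\beta}$ to a unimodular multiple of $S_{\boldsymbol\beta'}S_{\boldsymbol\gamma'}^*$ uses the row-isometry relations $S_{i,t}^*S_{i,s}=\delta_{st}I$ in addition to \eqref{lac-S}; (ii) the irreducibility of $C^*(\{S_{i,s}\})$ on $\ell^2(\FF_{n_1}^+\times\cdots\times\FF_{n_k}^+)$ is nowhere stated in the paper, though it is true and follows from the same identity the paper exploits: ${\bf P}_\CC=\Delta_{\bf S}(I)\in C^*(\{S_{i,s}\})$ and $\chi_{{\bf g}_0}$ is cyclic, so every nonzero reducing subspace contains $\chi_{{\bf g}_0}$ and therefore is everything. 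Both verifications are routine, so these are gaps of presentation rather than of substance.
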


\begin{proof} Using the definition of the universal model ${\bf S}:=({\bf S}_1,\ldots, {\bf S}_k)$, one can  easily prove  that
$(id-\Phi_{{\bf S}_1})\circ\cdots \circ(id-\Phi_{{\bf S}_k})(I)
={\bf P}_\CC$, where ${\bf P}_\CC$ is the
 orthogonal projection from $\ell^2(\FF_{n_1}^+\times\cdots \times \FF_{n_k}^+)$ onto $\CC 1\subset \ell^2(\FF_{n_1}^+\times\cdots \times \FF_{n_k}^+)$. Since $A_1, A_2$ are  multi-analytic operators   with respect to ${\bf S}$ and $A_1A_1^*=A_2 A_2^*$, we deduce that
 \begin{equation*}\begin{split}
 \|({\bf P}_\CC\otimes I_{\cH_1})A_1^*f\|^2&=\left<A_1(id-\Phi_{{\bf S}_1\otimes I})\circ\cdots \circ(id-\Phi_{{\bf S}_k\otimes I})(I)A_1^*f, f\right>\\
 &=\left<(id-\Phi_{{\bf S}_1\otimes I})\circ\cdots \circ(id-\Phi_{{\bf S}_k\otimes I})(A_1A_1^*)f, f\right>\\
 &=\left<(id-\Phi_{{\bf S}_1\otimes I})\circ\cdots \circ(id-\Phi_{{\bf S}_k\otimes I})(A_2A_2^*)f, f\right>\\
 &=\left<A_2(id-\Phi_{{\bf S}_1\otimes I})\circ\cdots \circ(id-\Phi_{{\bf S}_k\otimes I})(I)A_2^*f, f\right>\\
 &=\|({\bf P}_\CC\otimes I_{\cH_2})A_2^*f\|^2
 \end{split}
 \end{equation*}
 for all $ f\in \ell^2(\FF_{n_1}^+\times\cdots \times \FF_{n_k}^+)\otimes \cK$.
 Define $\cL_s:=({\bf P}_\CC\otimes I_{\cH_s})\overline{A_s^*(\ell^2(\FF_{n_1}^+\times\cdots \times \FF_{n_k}^+)\otimes \cK)}$, $s=1,2$, and  consider  the unitary  operator
 $U:\cL_1\to \cL_2$  defined by
 $$
 U({\bf P}_\CC\otimes I_{\cH_1})A_1^*f:=({\bf P}_\CC\otimes I_{\cH_2})A_2^*f, \qquad f\in \ell^2(\FF_{n_1}^+\times\cdots \times \FF_{n_k}^+)\otimes \cK.
 $$
 Now, we can extend $U$ to a partial isometry $V:\cH_1\to \cH_2$ with initial space $\cL_1={\supp (A_1)}$ and final space $\cL_2={\supp (A_2)}$. Moreover, we have
 $A_1 V^*= A_2|_{\CC\otimes \cH_2}$.   Since $A_1, A_2$ are  multi-analytic operators   with respect to ${\bf S}$, we deduce that
 $A_1(I_{\ell^2(\FF_{n_1}^+\times\cdots \times \FF_{n_k}^+)}\otimes V^*)=A_2$. The last part of the  lemma is obvious.
\end{proof}

We say that  ${T}=({ T}_1,\ldots, { T}_k)\in {\bf B}_\Lambda(\cH)$  has
  characteristic function   if there is a Hilbert space $\cE$ and
   a multi-analytic operator $\Psi:\ell^2(\FF_{n_1}^+\times\cdots \times \FF_{n_k}^+)\otimes \cE \to \ell^2(\FF_{n_1}^+\times\cdots \times \FF_{n_k}^+)\otimes \overline{{\Delta_{T}}(I) (\cH)}$ with respect to $S_{i,s}$, $i\in \{1,\ldots,k\}$, $s\in \{1,\ldots, n_i\}$, such that
$$
{ K_{T}}{ K_{T}^*} +\Psi \Psi^*=I,
$$
where 
$
K_{T}:\cH\to \ell^2(\FF_{n_1}^+\times\cdots \times \FF_{n_k}^+)\otimes \cD(T)
$
is the noncommutative Berezin kernel associated with ${T}$.
 According to Lemma \ref{fifi*}, if there is a characteristic function
  for ${T}\in {\bf B}_\Lambda(\cH)$, then it is essentially unique.

 \begin{theorem} \label{charact} A $k$-tuple  ${T}=({ T}_1,\ldots, { T}_k)$  in  the noncommutative $\Lambda$-polyball ${\bf B}_\Lambda(\cH)$  admits a characteristic function if and only if
$$
{\Delta}_{{\bf S}\otimes I}(I -{ K_{T}}{ K_{T}^*})\geq 0,
$$
  where ${K_{T}}$ is the noncommutative Berezin kernel  associated with ${T}$ and
  $${\Delta}_{{\bf S}\otimes I}:=(id-\Phi_{{\bf S}_1\otimes I})\circ\cdots \circ (id-\Phi_{{\bf S}_k\otimes I}).
  $$
   If, in addition,  ${ T}$ is  a pure $k$-tuple in ${\bf B}_\Lambda(\cH)$, then the following statements are equivalent.
   \begin{enumerate}
   \item[(i)]
     $T$ admits a characteristic function.
      \item[(ii)]  $({ K_{T}}\cH)^\perp$ is a Beurling type invariant subspace under all the operators ${ S}_{i,s}\otimes I$.
      \item[(iii)]  The $k$-tuple $({\bf S}\otimes I)|_{({ K_{T}}\cH)^\perp}$ is doubly $\Lambda$-commuting.
      \item[(iv)] There is a Beurling type invariant subspace $\cM$  under ${ S}_{i,s}\otimes I_\cD$  for some Hilbert space $\cD$ such that $T_{i,s}^*=(S_{i,s}^*\otimes I_\cD)|_{\cM^\perp}$  for any $i\in \{1,\ldots, k\}, s\in \{1,\ldots, n_i\}$ and
      $$
      \ell^2(\FF_{n_1}^+\times\cdots \times \FF_{n_k}^+)\otimes \cD=\bigvee_{{\boldsymbol \alpha}\in \FF_{n_1}^+\times\cdots \times \FF_{n_k}^+} ({\bf S}_{\boldsymbol \alpha}\otimes I_\cD)\cM^\perp.
      $$
    \end{enumerate}
\end{theorem}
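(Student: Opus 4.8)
The plan is to reduce everything to the Beurling type factorization of Theorem~\ref{Beurling-fac} and the Beurling type subspace characterization of Theorem~\ref{Beurling}, applied to the selfadjoint operator $Y:=I-K_TK_T^*$ on $\ell^2(\FF_{n_1}^+\times\cdots\times\FF_{n_k}^+)\otimes\cD(T)$. For the first assertion (valid for an arbitrary $T\in{\bf B}_\Lambda(\cH)$), note that by definition $T$ admits a characteristic function precisely when there is a multi-analytic operator $\Psi$ into $\ell^2(\FF_{n_1}^+\times\cdots\times\FF_{n_k}^+)\otimes\overline{\Delta_T(I)\cH}=\ell^2(\FF_{n_1}^+\times\cdots\times\FF_{n_k}^+)\otimes\cD(T)$ with $\Psi\Psi^*=I-K_TK_T^*=Y$. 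Applying Theorem~\ref{Beurling-fac} with $\cK:=\cD(T)$, such a factorization $Y=\Psi\Psi^*$ exists if and only if $\Delta_{{\bf S}\otimes I}(Y)\ge0$; the ordering of the factors is immaterial since the maps $\Phi_{{\bf S}_i\otimes I}$ commute. This establishes the stated criterion.

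For the pure case I would first record that, by Theorem~\ref{Berezin}(i), $K_T$ is an isometry, so $I-K_TK_T^*={\bf P}_{(K_T\cH)^\perp}$, and that $(K_T\cH)^\perp$ is jointly invariant under the $S_{i,s}\otimes I$ because $K_T\cH$ is co-invariant by Theorem~\ref{Berezin}(ii). The equivalence (i)$\Leftrightarrow$(ii) is then immediate: a characteristic function $\Psi$ satisfies $\Psi\Psi^*={\bf P}_{(K_T\cH)^\perp}$, which forces $\Psi$ to be a partial isometry, i.e.\ an inner multi-analytic operator whose range is $(K_T\cH)^\perp$, so $(K_T\cH)^\perp$ is of Beurling type; conversely an inner multi-analytic $\Psi$ with range $(K_T\cH)^\perp$ satisfies $\Psi\Psi^*=I-K_TK_T^*$ and maps into $\ell^2(\FF_{n_1}^+\times\cdots\times\FF_{n_k}^+)\otimes\cD(T)$, hence yields a characteristic function. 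The equivalence (ii)$\Leftrightarrow$(iii) is exactly Theorem~\ref{Beurling} applied to $\cM:=(K_T\cH)^\perp$.

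To prove that (i)--(iii) imply (iv), I would take $\cD:=\cD(T)$ and $\cM:=(K_T\cH)^\perp$, identify $\cH$ with $K_T\cH=\cM^\perp$ through the isometry $K_T$, so that $T_{i,s}^*=(S_{i,s}^*\otimes I_\cD)|_{\cM^\perp}$ by Theorem~\ref{Berezin}(ii), while $\cM$ is of Beurling type by (ii). It remains to verify the spanning condition, equivalently that the smallest reducing subspace $\cR$ containing $K_T\cH$ is all of $\ell^2(\FF_{n_1}^+\times\cdots\times\FF_{n_k}^+)\otimes\cD(T)$. Here I would use that ${\bf P}_\CC\otimes I=(id-\Phi_{{\bf S}_1\otimes I})\circ\cdots\circ(id-\Phi_{{\bf S}_k\otimes I})(I)$, as recorded in Lemma~\ref{fifi*}, commutes with the reducing projection ${\bf P}_\cR$, together with the identity $({\bf P}_\CC\otimes I)K_Th=\chi_{{\bf g}_0}\otimes\Delta_T(I)^{1/2}h$; since $\overline{\Delta_T(I)^{1/2}\cH}=\cD(T)$, the subspace $\chi_{{\bf g}_0}\otimes\cD(T)$ lies in $\cR$, and applying the shifts $S_{i,s}\otimes I$ fills out the whole space.

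The main obstacle is the converse (iv)$\Rightarrow$(ii), where one must show that the abstract minimal coisometric realization in (iv) coincides with the canonical Berezin realization. Writing $J:\cH\to\cM^\perp\subset\ell^2(\FF_{n_1}^+\times\cdots\times\FF_{n_k}^+)\otimes\cD$ for the identifying isometry, which intertwines $JT_{i,s}^*=(S_{i,s}^*\otimes I_\cD)J$, I would first compute $J^*({\bf P}_\CC\otimes I)J=\Delta_T(I)$ by iterating, from the inside out, the relation $J^*\Phi_{{\bf S}_i\otimes I}(X)J=\Phi_{T_i}(J^*XJ)$, and hence obtain $\|({\bf P}_\CC\otimes I)Jh\|=\|\Delta_T(I)^{1/2}h\|=\|({\bf P}_\CC\otimes I)K_Th\|$. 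The minimality hypothesis then forces $\overline{({\bf P}_\CC\otimes I)J\cH}=\chi_{{\bf g}_0}\otimes\cD$, and matching the $\chi_{{\bf g}_0}$-components produces a unitary $\tau:\cD\to\cD(T)$ sending the $\chi_{{\bf g}_0}$-component of $Jh$ to $\Delta_T(I)^{1/2}h$. Setting $\Phi:=I_{\ell^2}\otimes\tau$, which is a shift-intertwining unitary, I would check $\Phi J=K_T$ by comparing all Fourier components via the two intertwining relations, so that $\Phi(\cM)=(K_T\cH)^\perp$; since a shift-intertwining unitary carries inner multi-analytic operators to inner multi-analytic operators, $(K_T\cH)^\perp=\Phi(\cM)$ is again of Beurling type, which is (ii). The delicate points are the identification $\overline{({\bf P}_\CC\otimes I)J\cH}=\chi_{{\bf g}_0}\otimes\cD$ forced by minimality and the component-by-component verification that $\Phi J=K_T$, both of which I would handle purely through the intertwining relations.
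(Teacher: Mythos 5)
Your proof is correct, and for the first assertion and for the equivalences (i)$\Leftrightarrow$(ii)$\Leftrightarrow$(iii) it follows essentially the paper's own route: apply Theorem~\ref{Beurling-fac} to $Y=I-K_TK_T^*$ with $\cK=\cD(T)$, observe that purity makes $K_T$ an isometry so that $Y={\bf P}_{(K_T\cH)^\perp}$, and invoke Theorem~\ref{Beurling}. Where you genuinely diverge is the equivalence with (iv). The paper settles both implications by citing Theorem 5.6 of \cite{Po-twisted}: the Berezin dilation of a pure tuple is \emph{minimal} and \emph{unique up to isomorphism}; minimality gives (ii)$\Rightarrow$(iv) at once, and uniqueness produces the unitary $\Omega:\cD(T)\to\cD$ with $(I\otimes\Omega)K_T\cH=\cM^\perp$, whence $I-K_TK_T^*=\Phi\Phi^*$ with $\Phi=(I\otimes\Omega^*)\Psi$ inner multi-analytic, which is (i), hence (ii). You instead re-prove both facts from scratch: minimality via $\Delta_{{\bf S}\otimes I}(I)={\bf P}_\CC\otimes I$ and the identity $({\bf P}_\CC\otimes I)K_Th=\chi_{{\bf g}_0}\otimes\Delta_T(I)^{1/2}h$; uniqueness via $J^*({\bf P}_\CC\otimes I)J=\Delta_T(I)$, the identification $\overline{({\bf P}_\CC\otimes I)J\cH}=\chi_{{\bf g}_0}\otimes\cD$ forced by minimality (the key point being ${\bf P}_\CC {\bf S}_{\boldsymbol\alpha}=0$ for $\boldsymbol\alpha\neq{\bf g}_0$), and the shift-intertwining unitary $\Phi=I\otimes\tau$ satisfying $\Phi J=K_T$. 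Your outline of these steps is sound (the componentwise check of $\Phi J=K_T$ works because $({\bf S}_{\boldsymbol\beta}^*\otimes I)J=JT_{k,\beta_k}^*\cdots T_{1,\beta_1}^*$, and the unimodular constants $\boldsymbol\mu(\boldsymbol\beta,{\bf g}_0)$ are all equal to $1$). What your route buys is a self-contained proof that does not lean on the external dilation theorem; what it costs is length, since you are in effect reconstructing the relevant portion of Theorem 5.6 of \cite{Po-twisted} inside the argument.

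One soft spot to repair in the (ii)$\Rightarrow$(iv) step: you assert that the spanning condition in (iv) is \emph{equivalent} to the smallest reducing subspace $\cR\supset K_T\cH$ being the whole space. Only one direction is trivial, since $\cN:=\bigvee_{\boldsymbol\alpha}({\bf S}_{\boldsymbol\alpha}\otimes I)K_T\cH\subset\cR$; the converse requires knowing that $\cN$ is itself reducing (true here, by pushing adjoints through words using $S_{i,s}^*S_{j,t}=\overline{\lambda_{ij}(s,t)}S_{j,t}S_{i,s}^*$, $S_{i,s}^*S_{i,t}=\delta_{st}I$, and co-invariance of $K_T\cH$, but not automatic). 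The cleanest fix is to run your argument directly in $\cN$ rather than $\cR$: each term ${\bf S}_{\boldsymbol\beta}{\bf S}_{\boldsymbol\beta}^*\otimes I$ in the finite expansion of ${\bf P}_\CC\otimes I=\Delta_{{\bf S}\otimes I}(I)$ maps $K_T\cH$ into $\cN$, because $({\bf S}_{\boldsymbol\beta}^*\otimes I)K_Th=K_T\bigl(T_{k,\beta_k}^*\cdots T_{1,\beta_1}^*h\bigr)\in K_T\cH$ by Theorem~\ref{Berezin}(ii); hence $\chi_{{\bf g}_0}\otimes\cD(T)\subset\cN$, and applying the shifts fills out $\ell^2(\FF_{n_1}^+\times\cdots\times\FF_{n_k}^+)\otimes\cD(T)$. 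With that adjustment the proof is complete.
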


\begin{proof}
 Assume that ${T}$
 has characteristic function. Then there is
  a multi-analytic operator $\Psi$  such that
$
{ K_{T}}{K_{T}^*} +\Psi \Psi^*=I.
$
Since $\Psi$ is  a multi-analytic operator and, ${ \Delta}_{{\bf S}\otimes I} (I)={\bf P}_\CC\otimes I$,
 we have
$$
{\Delta}_{{\bf S}\otimes I} (I -{ K_{T}}{ K_{T}^*})={ \Delta}_{{\bf S}\otimes I}(\Psi\Psi^*)
=\Psi{ \Delta}_{{\bf S}\otimes I} (I)\Psi^*=\Psi ({\bf P}_\CC\otimes I) \Psi^*\geq 0.
$$
 In order to prove the converse, we apply Theorem \ref{Beurling-fac} to the operator
  $Y=I -{ K_{T}}{ K_{T}^*}$.

  To prove the second part of the theorem, note that if $T$ is a pure  $k$-tuple in ${\bf B}_\Lambda$, then the Berezin kernel $K_T$ is an isometry and $I-K_TK_T^*={\bf P}_\cM$, where ${\bf P}_\cM$ is the orthogonal projection onto  $\cM:=({K_{T}}\cH)^\perp$. Using the first part of the theorem and
  applying
      Theorem \ref{Beurling}, one obtains the equivalences of the items (i), (ii), and (iii).

  Due to Theorem 5.6 from \cite{Po-twisted}, if
   $T=(T_1,\ldots, T_k)$ is  a pure $k$-tuple in the regular $\Lambda$-polyball and
  $$
K_{T}:\cH\to \ell^2(\FF_{n_1}^+\times\cdots \times \FF_{n_k}^+)\otimes \overline{\Delta_T(I)(\cH)},
$$
is
the corresponding noncommutative Berezin kernel, then   the dilation provided by  Theorem \ref{Berezin}  satisfies the relation
$$
K_T T_{i,s}^*=\left(S_{i,s}^*\otimes I_{\cD(T)}\right) K_T, \quad i\in \{1,\ldots, k\}, s\in \{1,\ldots, n_i\},
$$
and it is minimal, i.e.
$$
\ell^2(\FF_{n_1}^+\times\cdots \times \FF_{n_k}^+)\otimes \overline{\Delta_T(I)(\cH)} =\bigvee_{\boldsymbol\alpha\in \FF_{n_1}^+\times \cdots \times \FF_{n_k}^+} ({\bf S}_{\boldsymbol\alpha}\otimes
I_{\cD(T)}) K_T\cH.
$$
Moreover, this dilation   is unique up to an isomorphism.
Setting $\cM:= (K_{T}\cH)^\perp$, $\cD:=\cD(T):=\overline{\Delta_T(I)(\cH)}$, and identifying $\cH$ with   $K_T\cH$, we conclude that  (ii)$\implies$ (iv).
  Now, we prove the implication (iv)$\implies$(ii).

  Assume that  ${ T}\in {\bf B}_\Lambda(\cH)$  is a pure element and  that there is  a Beurling type invariant subspace $\cM$ under ${ S}_{i,s}\otimes I_\cD$ such that $T_{i,s}^*=({S}_{i,s}^*\otimes I_\cD)|_{\cM^\perp}$ and
$
\ell^2(\FF_{n_1}^+\times\cdots \times \FF_{n_k}^+)\otimes \cD=\bigvee_{\boldsymbol\alpha\in \FF_{n_1}^+\times \cdots \times \FF_{n_k}^+} ({\bf S}_{\boldsymbol\alpha}\otimes
I_{\cD}) \cM^\perp.
$
Using the uniqueness of the dilation provided by the noncommutative Berezin kernel associate
with $T$, we deduce that  there is a unitary  operator $\Omega:\cD(T)\to \cD$ such that
$(I_{\ell^2(\FF_{n_1}^+\times\cdots \times \FF_{n_k}^+)}\otimes \Omega)K_T\cH=\cM^\perp$.
Hence,
$(I_{\ell^2(\FF_{n_1}^+\times\cdots \times \FF_{n_k}^+)}\otimes \Omega)K_TK_T^*
(I_{\ell^2(\FF_{n_1}^+\times\cdots \times \FF_{n_k}^+)}\otimes \Omega^*)={\bf P}_{\cM^\perp}$. Since
$\cM$ is a Beuling type invariant subspace,
there is an inner multi-analytic operator
$\Psi: \ell^2(\FF_{n_1}^+\times\cdots \times \FF_{n_k}^+)\otimes \cL\to
\ell^2(\FF_{n_1}^+\times\cdots \times \FF_{n_k}^+)\otimes \cD$ such that
$$
{\bf P}_\cM=\Psi  \Psi^*.
$$
Now, one can easily see that
$$
I-K_TK_T^*=\Phi\Phi^*,
$$
where $\Phi:=(I_{\ell^2(\FF_{n_1}^+\times\cdots \times \FF_{n_k}^+)}\otimes \Omega^*)\Psi$  is an inner multi-analytic operator.
  The proof is complete.
\end{proof}

 If ${T}$ has characteristic function, the multi-analytic operator $A$ provided by  Theorem \ref{Beurling-fac} when $Y=I -{K_{T}}{ K_{T}^*}$, which we denote by $\Theta_{T}$,  is called  the {\it characteristic function} of ${T}$. More precisely,  due to the remarks following Theorem \ref{Beurling-fac}, one can see that $\Theta_{T}$
  is the multi-analytic operator
 $$\Theta_{T}:\ell^2(\FF_{n_1}^+\times\cdots \times \FF_{n_k}^+)\otimes \overline{{\Delta_{M_T} }(I)(\cM_T)} \to \ell^2(\FF_{n_1}^+\times\cdots \times \FF_{n_k}^+)\otimes \overline{{\Delta_{T} }(I)(\cH)}
 $$
defined by $\Theta_{T}:=(I -{ K_{T}}{K_{T}^*})^{1/2} { K_{M_T}^*}$, where
$${K_{T}}: \cH \to \ell^2(\FF_{n_1}^+\times\cdots \times \FF_{n_k}^+) \otimes  \overline{{\Delta_{T}}(I)(\cH)}$$
is the noncommutative Berezin kernel associated with $T\in {\bf B}_\Lambda(\cH)$ and
$${ K_{M_T}}: \cH \to \ell^2(\FF_{n_1}^+\times\cdots \times \FF_{n_k}^+) \otimes  \overline{{\Delta_{M_T}}(I)(\cM_{T})}$$
is the noncommutative Berezin kernel associated
 with ${M_T}\in {\bf B}_{\Lambda}(\cM_{T})$.  Here, we have
$$
\cM_{T}:= \overline{{\rm range}\,(I -{K_{T}}{ K_{T}^*}) }
$$
and  ${ M_T}:=(M_1,\ldots, M_k)$ is the $k$-tuple
 with $M_i:=(M_{i,1},\ldots, M_{i,n_i})$ and $M_{i,s}\in B(\cM_{T})$   given by $M_{i,s}:=A_{i,s}^*$, where $A_{i,s}\in B(\cM_{T})$ is uniquely defined by
$$
A_{i,s}\left[(I -{K_{T}}{K_{T}^*})^{1/2}f\right]:=(I -{K_{T}}{K_{T}^*})^{1/2}({S}_{i,s}\otimes I)f
$$
for all $f\in \ell^2(\FF_{n_1}^+\times\cdots \times \FF_{n_k}^+)\otimes \overline{{\Delta_{T}}(I)(\cH)}$. According to Theorem \ref{Beurling-fac}, we have
$
{K_{T}}{K_{T}^*}+ \Theta_{T}\Theta_{T}^*=I.
$

\begin{theorem}\label{pure-model} Let  ${ T}=({ T}_1,\ldots, { T}_k)$ be a  $k$-tuple  in
${\bf B}_\Lambda(\cH)$ which  admits characteristic function. Then ${T}$  is pure if and only if  the characteristic function $\Theta_{T}$ is an inner multi-analytic operator. Moreover, in this case  ${T}=({ T}_1,\ldots, { T}_k)$ is unitarily equivalent to ${G}=({ G}_1,\ldots, { G}_k)$, where $G_i:=(G_{i,1},\ldots, G_{i,n_i})$ is defined by
$$ G_{i,s}:={\bf P}_{H_{T}} \left({ S}_{i,s}\otimes I\right)|_{H_{T}}, \quad i\in \{1,\ldots, k\}, s\in \{1,\ldots, n_i\},
$$
and ${\bf P}_{ H_{T}}$ is the orthogonal projection of $ \ell^2(\FF_{n_1}^+\times\cdots \times \FF_{n_k}^+)\otimes \overline{{ \Delta_{T}}(I)(\cH)}$ onto
$${ H_{T}}:=\left\{ \ell^2(\FF_{n_1}^+\times\cdots \times \FF_{n_k}^+)\otimes \overline{{\Delta_{T}}(I)(\cH)}\right\}\ominus {\rm range}\, \Theta_{T}.
$$
\end{theorem}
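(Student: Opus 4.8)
The plan is to reduce both assertions to two facts about the noncommutative Berezin kernel $K_T$: that $T$ is pure precisely when $K_T$ is an isometry, and that, by the construction following Theorem~\ref{Beurling-fac} applied to $Y=I-K_TK_T^*$, the characteristic function satisfies $\Theta_T\Theta_T^*=I-K_TK_T^*$. First I would record that, with the notation $A_i:=\text{\rm WOT-}\lim_{q}\Phi_{T_i}^{q+1}(I)$ from the proof of Theorem~\ref{Berezin}, one has $K_T^*K_T=(I-A_k)\cdots(I-A_1)$; since the $A_i$ are commuting positive contractions with $0\le I-A_i\le I$, the equality $K_T^*K_T=I$ forces every $A_i=0$, and conversely purity gives $A_i=0$. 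Hence $T$ is pure if and only if $K_T$ is an isometry.

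For the equivalence itself, note that $\Theta_T$ is inner, i.e.\ a partial isometry, exactly when $\Theta_T\Theta_T^*$ is an orthogonal projection, which by the identity above is equivalent to $K_TK_T^*$ being a projection, i.e.\ to $K_T$ being a partial isometry. The implication ``$T$ pure $\Rightarrow\Theta_T$ inner'' is then immediate: purity makes $K_T$ isometric, so $K_TK_T^*$ is the projection onto $\text{\rm range}\,K_T$ and $\Theta_T\Theta_T^*=I-K_TK_T^*$ is a projection. The reverse implication is the main obstacle: from $K_TK_T^*$ being a projection one only obtains that $K_T$ is a partial isometry, and this must be upgraded to $K_T$ being an isometry, equivalently $\ker K_T=0$. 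I would attack this by observing that $\ker K_T$ is invariant under each $T_{i,s}^*$ (from $K_TT_{i,s}^*=(S_{i,s}^*\otimes I)K_T$) and that $\Delta_T(I)^{1/2}T_{k,\beta_k}^*\cdots T_{1,\beta_1}^*$ annihilates $\ker K_T$ for every multi-index $(\beta_1,\ldots,\beta_k)$; pushing these relations through the intertwining should force the relevant $A_i$ to vanish on $\cH$, so that $K_T^*K_T=I$ and $T$ is pure.

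For the model statement, assume $T$ is pure, so $K_T$ is isometric and $\Theta_T$ is inner. Then $\Theta_T\Theta_T^*$ is the projection onto the closed, $(S_{i,s}\otimes I)$-invariant subspace $\text{\rm range}\,\Theta_T$, whence $K_TK_T^*=I-\Theta_T\Theta_T^*={\bf P}_{H_T}$, the projection onto $H_T=\{\ell^2(\FF_{n_1}^+\times\cdots\times\FF_{n_k}^+)\otimes\overline{\Delta_T(I)(\cH)}\}\ominus\text{\rm range}\,\Theta_T$. Since $K_T$ is isometric, $K_TK_T^*$ is the projection onto $\text{\rm range}\,K_T$, so $\text{\rm range}\,K_T=H_T$ and $K_T:\cH\to H_T$ is unitary. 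Because $\text{\rm range}\,\Theta_T$ is invariant under the operators $S_{i,s}\otimes I$, the subspace $H_T$ is co-invariant, so $(S_{i,s}^*\otimes I)|_{H_T}=G_{i,s}^*$. Feeding the intertwining relation of Theorem~\ref{Berezin}(ii), $K_TT_{i,s}^*=(S_{i,s}^*\otimes I)K_T=G_{i,s}^*K_T$, and conjugating by the unitary $K_T$ yields $T_{i,s}=K_T^*G_{i,s}K_T$ for all $i,s$; thus $T$ is unitarily equivalent to $G=(G_1,\ldots,G_k)$. The one routine point to verify is that $G_{i,s}={\bf P}_{H_T}(S_{i,s}\otimes I)|_{H_T}$ indeed has adjoint $(S_{i,s}^*\otimes I)|_{H_T}$, which is the standard compression-to-a-co-invariant-subspace computation.
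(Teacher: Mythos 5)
Your treatment of the forward implication and of the model statement is correct and essentially the paper's own argument: purity makes $K_T$ an isometry, so $K_TK_T^*$ is the projection onto $K_T\cH$; the identity $K_TK_T^*+\Theta_T\Theta_T^*=I$ then forces $\Theta_T$ to be a partial isometry and $K_T\cH=H_T$; and conjugating the intertwining relations $K_TT_{i,s}^*=(S_{i,s}^*\otimes I)K_T$ by the unitary $K_T:\cH\to H_T$ gives the unitary equivalence of $T$ with $G$. (The paper obtains co-invariance of $H_T$ from co-invariance of $K_T\cH$ rather than from invariance of $\overline{{\rm range}\,\Theta_T}$; both are fine, as is your remark on compressions to co-invariant subspaces.)

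The genuine gap is in the converse, and it is not of the kind that can be closed by ``pushing the relations through the intertwining'': the statement you are trying to prove there is false in the generality in which you attack it. You correctly reduce to showing that the partial isometry $K_T$ has trivial kernel, but the intertwining relations alone cannot give this. Concretely, take $k=1$, $n_1=1$, $\cH=\cH_1\oplus\cH_2$ with $\cH_1\neq\{0\}$, and $T=U\oplus 0$ with $U$ unitary. Then $T\in{\bf B}_\Lambda(\cH)$, $\Delta_T(I)=0\oplus I_{\cH_2}$, and $K_T(h_1\oplus h_2)=\chi_0\otimes h_2$, where $\chi_0$ is the basis vector of $\ell^2(\FF_1^+)$ indexed by the neutral element. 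Hence $I-K_TK_T^*=(S\otimes I_{\cH_2})(S\otimes I_{\cH_2})^*$, where $S:=S_{1,1}$ is the unilateral shift, so $T$ admits a characteristic function and $\Theta_T\Theta_T^*$ is a projection, i.e.\ $\Theta_T$ is inner; yet $T$ is not pure, and $\ker K_T=\cH_1\neq\{0\}$, even though $\ker K_T$ is invariant under $T^*$ and annihilated by every $\Delta_T(I)^{1/2}T^{*n}$ --- exactly the facts your sketch relies on. The missing ingredient is the hypothesis that $T$ is completely non-coisometric: by Theorem \ref{Berezin}(i) and the discussion in Section 3, complete non-coisometricity is precisely the statement that $K_T$ (equivalently $K_T^*K_T$) is one-to-one, and a one-to-one partial isometry is an isometry. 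This is exactly how the paper's proof of the converse runs (``since $T$ is completely non-coisometric, $K_T$ is a one-to-one partial isometry and, consequently, an isometry''), after which $K_T^*K_T=I$ yields purity as in your first paragraph. Note that this hypothesis does not appear in the theorem's statement as printed --- it is used silently in the paper's proof and, as the example shows, it cannot be dispensed with; any repair of your argument must invoke it.
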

\begin{proof} Assume that ${T}$ is a pure $k$-tuple  in
${\bf B}_\Lambda(\cH)$ which  admits characteristic function.    Theorem \ref{Berezin} shows that
 the noncommutative Berezin kernel associated with ${T}$, i.e.,
$${K_{T}}: \cH \to   \ell^2(\FF_{n_1}^+\times\cdots \times \FF_{n_k}^+) \otimes  \overline{{\Delta_{T}}(I)(\cH)}$$
is an isometry, the subspace ${ K_{T}}\cH$ is coinvariant  under  the operators
${S}_{i,s}\otimes I_{\overline{{\Delta_{T}}(I)(\cH)}}$, $i\in \{1,\ldots, k\}$, $s\in \{1,\ldots, n_i\}$, and
$T_{i,s}={ K_{T}^*}({S}_{i,s}\otimes I_{\overline{{\Delta_{T}}(I)(\cH)}}) {K_{T}}$.
Since ${ K_{T}}{ K_{T}^*}$ is the orthogonal projection of $ \ell^2(\FF_{n_1}^+\times\cdots \times \FF_{n_k}^+)\otimes \overline{{ \Delta_{T}}(I)(\cH)}$ onto ${ K_{T}}\cH$ and  ${K_{T}}{ K_{T}^*}+\Theta_{T}\Theta_{T}^*=I $, we deduce that $\Theta_{T}$ is a partial isometry and
${ K_{T}}\cH={ H_{T}}$. Since ${K_{T}}$ is an isometry, we can identify $\cH$ with ${ K_{T}}\cH$. Therefore, ${ T}=({ T}_1,\ldots, { T}_k)$ is unitarily equivalent to ${ G}=({ G}_1,\ldots, { G}_k)$.

Conversely,   assume that  $\Theta_{T}$ is an inner multi-analytic operator. Since  $
{ K_{T}}{K_{T}^*}+ \Theta_{T}\Theta_{T}^*=I,
$ and $\Theta_T$ is a partial isometry,
 the noncommutative Berezin kernel ${K_{T}}$ is a partial isometry. On the other hand,
 since ${T}$ is completely non-coisometric,
  ${K_{T}}$ is a one-to-one partial isometry and,
consequently, an isometry. Due to  Theorem  \ref{Berezin}, we have
$$
{K_{T}^*}{ K_{T}}=
\lim_{{\bf q}=(q_1,\ldots, q_k)\in \ZZ_+^k}(id-\Phi_{T_k}^{q_k})\cdots (id-\Phi_{T_1}^{q_1})(I)=I.
$$
Hence, we deduce that ${ T}$ is a pure $k$-tuple in ${\bf B}_\Lambda(\cH)$. The proof is complete.
\end{proof}

Now, we are able to provide a model theorem for the class of    completely non-coisometric $k$-tuple  of operators  in
${\bf B}_\Lambda(\cH)$ which  admit characteristic functions.

\begin{theorem}\label{model}  Let ${T}=({ T}_1,\ldots, { T}_k)$ be a
 completely non-coisometric $k$-tuple  in the $\Lambda$-polyball
${\bf B}_\Lambda(\cH)$  which  admits characteristic function, and let  ${\bf S}:=({\bf S}_1,\ldots, {\bf S}_k)$ be the universal model associated to ${\bf B}_\Lambda(\cH)$.   Set
$$
\cD:=\overline{{ \Delta_{T}}(I)(\cH)},\quad  \quad \cD_*:=\overline{{\Delta_{M_T}}(I)(\cM_T)},
$$
and $D_{\Theta_{T}}:= \left(I-\Theta_{T}^*
\Theta_{T}\right)^{1/2}$, where $\Theta_{T}$ is the characteristic function of ${T}$.
 Then ${T} $ is unitarily equivalent to
$\GG:=(\GG_1,\ldots, \GG_k)\in {\bf B}_\Lambda(\HH_{T})$, where $\GG_i:=(\GG_{i,1},\ldots, \GG_{i,n_i})$ and, for each $i\in \{1,\ldots, k\}$ and $s\in \{1,\ldots, n_i\}$,  $\GG_{i,s}$ is a bounded operator acting on the
Hilbert space
\begin{equation*}
\begin{split}
\HH_{T}&:=\left[\left(\ell^2(\FF_{n_1}^+\times\cdots \times \FF_{n_k}^+)\otimes\cD\right)\bigoplus
\overline{D_{\Theta_{T}}(\ell^2(\FF_{n_1}^+\times\cdots \times \FF_{n_k}^+)\otimes \cD_*)}\right]\\
& \qquad \qquad\ominus\left\{\Theta_{T}\varphi\oplus
D_{\Theta_{T}}\varphi:\ \varphi\in \ell^2(\FF_{n_1}^+\times\cdots \times \FF_{n_k}^+)\otimes  \cD_*\right\}
\end{split}
\end{equation*}
 and is uniquely defined by the relation
$$
\left( {\bf P}^{\KK_T}_{\ell^2(\FF_{n_1}^+\times\cdots \times \FF_{n_k}^+)\otimes\cD}|_{\HH_{T}}\right) \GG_{i,s}^*f=
({S}_{i,s}^*\otimes I_{ \cD})\left( {\bf P}^{\KK_T}_{\ell^2(\FF_{n_1}^+\times\cdots \times \FF_{n_k}^+)\otimes \cD}|_{\HH_{T}}\right)f
$$
for every $f\in \HH_{T}$.  Here,
   $ {\bf P}^{\KK_T}_{\ell^2(\FF_{n_1}^+\times\cdots \times \FF_{n_k}^+)\otimes  \cD}$ is the orthogonal
projection of the Hilbert space
$$\KK_{T}:=\left(\ell^2(\FF_{n_1}^+\times\cdots \times \FF_{n_k}^+)\otimes\cD\right)\bigoplus
\overline{D_{\Theta_{T}}(\ell^2(\FF_{n_1}^+\times\cdots \times \FF_{n_k}^+)\otimes \cD_*)}$$
 onto
the subspace $\ell^2(\FF_{n_1}^+\times\cdots \times \FF_{n_k}^+)\otimes \cD$ and ${\bf P}^{\KK_T}_{\ell^2(\FF_{n_1}^+\times\cdots \times \FF_{n_k}^+)\otimes\cD}|_{\HH_{T}}$ is a one-to-one operator.
 \end{theorem}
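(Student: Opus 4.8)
The plan is to realize $\HH_T$ as a co-invariant subspace of a dilation space, to define $\GG$ as the corresponding compression, and then to build a unitary $\Phi\colon\cH\to\HH_T$ whose first coordinate is the Berezin kernel $K_T$; once this is set up, the intertwining relation comes for free and the membership $\GG\in{\bf B}_\Lambda(\HH_T)$ is automatic.

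First I would form the isometry $W\colon\ell^2(\FF_{n_1}^+\times\cdots\times\FF_{n_k}^+)\otimes\cD_*\to\KK_T$ given by $W\varphi:=\Theta_T\varphi\oplus D_{\Theta_T}\varphi$; since $\|\Theta_T\varphi\|^2+\|D_{\Theta_T}\varphi\|^2=\langle(\Theta_T^*\Theta_T+I-\Theta_T^*\Theta_T)\varphi,\varphi\rangle=\|\varphi\|^2$, the map $W$ is isometric and $\HH_T=\KK_T\ominus\ran W$ by definition. On $\cY:=\overline{D_{\Theta_T}(\ell^2(\cdots)\otimes\cD_*)}$ I would then define $\tilde S_{i,s}$ by $\tilde S_{i,s}(D_{\Theta_T}\varphi):=D_{\Theta_T}(S_{i,s}\otimes I_{\cD_*})\varphi$. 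Using that $\Theta_T$ is multi-analytic, so $\Theta_T(S_{i,s}\otimes I)=(S_{i,s}\otimes I)\Theta_T$, and that each $S_{i,s}$ is an isometry, one computes $\|D_{\Theta_T}(S_{i,s}\otimes I)\varphi\|^2=\|\varphi\|^2-\|\Theta_T\varphi\|^2=\|D_{\Theta_T}\varphi\|^2$, so $\tilde S_{i,s}$ is well defined and isometric on $\cY$. Setting $\cS_{i,s}:=(S_{i,s}\otimes I_\cD)\oplus\tilde S_{i,s}$ on $\KK_T$, the defining relations give $\cS_{i,s}W=W(S_{i,s}\otimes I_{\cD_*})$, hence $\ran W$ is invariant under every $\cS_{i,s}$ and $\HH_T$ is invariant under every $\cS_{i,s}^*$. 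I define $\GG_{i,s}:={\bf P}_{\HH_T}\cS_{i,s}|_{\HH_T}$, so $\GG_{i,s}^*=\cS_{i,s}^*|_{\HH_T}$. With $Q:={\bf P}^{\KK_T}_{\ell^2(\cdots)\otimes\cD}|_{\HH_T}$, the block form $\cS_{i,s}^*=(S_{i,s}^*\otimes I_\cD)\oplus\tilde S_{i,s}^*$ immediately yields the asserted relation $Q\GG_{i,s}^*=(S_{i,s}^*\otimes I_\cD)Q$; and $Q$ is one-to-one, since if $x\oplus y\in\HH_T$ has $x=0$ then orthogonality to $\ran W$ forces $y\perp\cY$, whence $y=0$.

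Next I would construct the unitary. Put $D_T:=(I-K_T^*K_T)^{1/2}$ on $\cH$. Because $T$ is completely non-coisometric, Theorem \ref{Berezin}(i) identifies $K_T^*K_T$ with the increasing weak limit $\lim_{p_k}\cdots\lim_{p_1}(id-\Phi_{T_k}^{p_k})\circ\cdots\circ(id-\Phi_{T_1}^{p_1})(I)$, whose kernel is trivial; hence $K_T$ is injective. I define $\Phi h:=K_T h\oplus\omega D_T h$, where $\omega\colon\overline{\ran D_T}\to\cY$ is the isometry forced by the requirement $\ran\Phi\subseteq\HH_T$, that is $D_{\Theta_T}\,\omega D_T h=-\Theta_T^*K_T h$; its existence I would read off from the Beurling-type factorization $\Theta_T=(I-K_TK_T^*)^{1/2}K_{M_T}^*$ with $K_{M_T}$ isometric (Theorem \ref{Beurling-fac}), together with the commutation $(I-K_TK_T^*)^{1/2}K_T=K_T D_T$. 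Then $\|\Phi h\|^2=\|K_T h\|^2+\|D_T h\|^2=\|h\|^2$, so $\Phi$ is isometric with first coordinate $Q\Phi=K_T$. Granting this, the intertwining is free: by the Berezin relation of Theorem \ref{Berezin}(ii), $Q\GG_{i,s}^*\Phi=(S_{i,s}^*\otimes I)Q\Phi=(S_{i,s}^*\otimes I)K_T=K_T T_{i,s}^*=Q\Phi T_{i,s}^*$, and injectivity of $Q$ gives $\GG_{i,s}^*\Phi=\Phi T_{i,s}^*$. In particular, once $\Phi$ is onto, $\GG$ is unitarily equivalent to $T\in{\bf B}_\Lambda(\cH)$ and therefore lies in ${\bf B}_\Lambda(\HH_T)$, with no separate check of the polyball axioms required.

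The remaining, and principal, difficulty is the surjectivity $\ran\Phi=\HH_T$, hand in hand with pinning down $\omega$. Here I would show that any $x\oplus y\in\HH_T$ orthogonal to $\ran\Phi$ must vanish: combining the defining constraint $\Theta_T^*x+D_{\Theta_T}y=0$ of $\HH_T$ with $K_TK_T^*+\Theta_T\Theta_T^*=I$ and with the \emph{minimality} of the Berezin dilation of the pure auxiliary tuple $M_T$ (Theorem 5.6 of \cite{Po-twisted}), one recovers $x\in\ran K_T$ and identifies $y$ as the corresponding defect vector $\omega D_T(\cdot)$, forcing the vector to be zero. I expect this to be the hard part: it amounts to proving that the second coordinate of $\HH_T$ is exhausted exactly by the defect space $\overline{\ran D_T}$ through $\omega$, which is precisely where the completeness of the $M_T$-dilation and the c.n.c.\ hypothesis (injectivity of $K_T$) are indispensable. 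The pure case is the sanity check already recorded in Theorem \ref{pure-model}, where $D_T=0$, the space $\cY$ collapses, and $\Phi$ reduces to $K_T$.
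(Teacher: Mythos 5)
Your route is genuinely different from the paper's, and the parts you actually carry out are correct: you build $\GG$ intrinsically, extending the model tuple to $\KK_T$ by $\cS_{i,s}:=(S_{i,s}\otimes I_\cD)\oplus \tilde S_{i,s}$ with $\tilde S_{i,s}D_{\Theta_T}\varphi:=D_{\Theta_T}(S_{i,s}\otimes I_{\cD_*})\varphi$ (well defined and isometric by multi-analyticity of $\Theta_T$), noting that $\ran W$ is $\cS_{i,s}$-invariant so that $\HH_T$ is co-invariant, and compressing; the displayed relation for $\GG_{i,s}^*$ and the injectivity of $Q$ then fall out of the block structure. The paper proceeds in the reverse order: it first produces the unitary $\Gamma:\cH\to\HH_T$ and then \emph{defines} $\GG_{i,s}:=\Gamma T_{i,s}\Gamma^*$, so membership in ${\bf B}_\Lambda(\HH_T)$ is automatic, and the relation in the statement is verified afterwards through ${\bf P}^{\KK_T}_{\ell^2(\FF_{n_1}^+\times\cdots\times\FF_{n_k}^+)\otimes\cD}\Gamma=K_T$. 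The decisive difference is how the unitary is obtained. The paper never describes its second coordinate: since the orthogonal complement of $\HH_T$ in $\KK_T$ is the range of the isometry $\Psi$ (your $W$) and $\Psi^*(g\oplus 0)=\Theta_T^*g$, one gets $\|{\bf P}^{\KK_T}_{\HH_T}(g\oplus 0)\|^2=\|g\|^2-\|\Theta_T^*g\|^2=\|K_T^*g\|^2$ from $K_TK_T^*+\Theta_T\Theta_T^*=I$, so $\Gamma K_T^*g:={\bf P}^{\KK_T}_{\HH_T}(g\oplus 0)$ is a well-defined isometry on the dense subspace $\ran K_T^*$ (c.n.c.), and it is onto because the vectors $g\oplus 0$ together with $\ran\Psi$ span $\KK_T$.

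That second coordinate is precisely what you set out to construct, and this is where your write-up has a gap: both the existence of the isometry $\omega$ and the surjectivity of $\Phi$ are asserted ("I would read off\dots", "I expect this to be the hard part") rather than proved. The gap is fillable, but it needs one ingredient you did not name: the polar decomposition $K_T=(K_TK_T^*)^{1/2}V$, where $V:\cH\to\overline{\ran K_T}$ is an \emph{isometry} exactly because c.n.c.\ makes $K_T$ injective. From $\Theta_T^*=K_{M_T}(I-K_TK_T^*)^{1/2}$ and $K_{M_T}^*K_{M_T}=I$ one computes $D_{\Theta_T}^2K_{M_T}=K_{M_T}K_TK_T^*$, hence $D_{\Theta_T}K_{M_T}=K_{M_T}(K_TK_T^*)^{1/2}$ by uniqueness of positive square roots; then $\omega:=-K_{M_T}V|_{\overline{\ran D_T}}$ maps isometrically into $\cY$ and satisfies $D_{\Theta_T}\omega D_T=-\Theta_T^*K_T$, as your constraint demands. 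Surjectivity then follows from the same algebra, with no appeal to minimality of the $M_T$-dilation: if $x\oplus y\in\HH_T$ is orthogonal to $\ran\Phi$, then membership in $\HH_T$ (i.e.\ $\Theta_T^*x+D_{\Theta_T}y=0$ with $y\in\cY$) forces $y=K_{M_T}Vz$ for some $z\in\cH$ with $K_Tz=-(I-K_TK_T^*)^{1/2}x$, while orthogonality to $\ran\Phi$ gives $K_T^*x=D_Tz$; applying $K_T^*$ to the former identity yields $K_T^*K_Tz=-D_T^2z$, so $z=0$, and then $K_T^*x=0$ together with $(I-K_TK_T^*)^{1/2}x=0$ force $x=0$. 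So your approach does close, but only after this supplementary work; what makes the paper's proof short is the norm-identity device above, which avoids ever exhibiting $\omega$.
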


\begin{proof}
 A straightforward computation reveals  that the operator $\Psi: \ell^2(\FF_{n_1}^+\times\cdots \times \FF_{n_k}^+)\otimes \cD\to
\cK_{T}$ defined by
$$
\Psi \varphi:=\Theta_{T} \varphi\oplus D_{\Theta_{T}}
\varphi,\quad \varphi\in \ell^2(\FF_{n_1}^+\times\cdots \times \FF_{n_k}^+)\otimes \cD_*,
$$
 is an isometry and
\begin{equation}\label{fi}
\Psi^*(g\oplus 0)=\Theta_{T}^*g, \qquad g\in \ell^2(\FF_{n_1}^+\times\cdots \times \FF_{n_k}^+)\otimes
\cD.
\end{equation}
Consequently, we deduce that
\begin{equation*}
\begin{split}
\|g\|^2&= \|{\bf P}^{\KK_T}_{\HH_{T}}(g\oplus 0)\|^2+\|\Psi \Psi^*(g\oplus 0)\|^2
=\|{\bf P}^{\KK_T}_{\HH_{T}}(g\oplus 0)\|^2+\|\Theta_{T}^*g\|^2
\end{split}
\end{equation*}
for every $g\in \ell^2(\FF_{n_1}^+\times\cdots \times \FF_{n_k}^+)\otimes \cD$, where ${\bf P}^{\KK_T}_{\HH_{T}}$ the orthogonal projection of
$\KK_{T}$ onto the subspace $\HH_{T}$. Since
\begin{equation*}
 \|{ K_{T}^*}  g\|^2+ \|\Theta_{T}^*g\|^2=\|g\|^2,
\quad g\in\ell^2(\FF_{n_1}^+\times\cdots \times \FF_{n_k}^+)\otimes \cD,
\end{equation*}
 we have
\begin{equation}\label{K*P}
\|{ K_{T}^*} g\|=\|{\bf P}^{\KK_T}_{\HH_{T}}(g\oplus 0)\|,  \quad g\in
\ell^2(\FF_{n_1}^+\times\cdots \times \FF_{n_k}^+) \otimes \cD.
\end{equation}
Due to the fact that  the $k$-tuple  ${T}=(T_1,\ldots, T_k)$ is  completely non-coisometric in ${\bf B}_\Lambda(\cH)$,  the noncommutative Berezin kernel ${K_{T}}$ is  a one-to-one
operator.  Thus  $\overline{\text{\rm
range}\, {K_{T}^*}}=\cH$. Let
$f\in \HH_{T}$ be with the property  that $\left<f, {\bf P}^{\KK_T}_{\HH_{T}}(g\oplus
0)\right>=0$ for any $g\in \ell^2(\FF_{n_1}^+\times\cdots \times \FF_{n_k}^+)\otimes \cD $. Due to the definition of
$\HH_{T}$ and the fact that $\KK_{T}$ coincides with the
closed span of all vectors
$g\oplus 0$, for $ g\in \ell^2(\FF_{n_1}^+\times\cdots \times \FF_{n_k}^+)\otimes \cD$, and
$\Theta_{T} \varphi\oplus D_{\Theta_{T}}
\varphi$, for  $ \varphi\in \ell^2(\FF_{n_1}^+\times\cdots \times \FF_{n_k}^+)\otimes \cD$,
we must have $f=0$. Consequently,
$$
\HH_{T}=\left\{{\bf P}^{\KK_T}_{\HH_{T}}(g\oplus 0):\ g\in \ell^2(\FF_{n_1}^+\times\cdots \times \FF_{n_k}^+)\otimes
\cD\right\}.
$$
Now, using  relation \eqref{K*P},  we deduce  that
 there is a  unitary
operator $\Gamma:\cH\to \HH_{T}$ such that
\begin{equation}\label{Ga}
\Gamma({ K_{T}^*} g)={\bf P}^{\KK_T}_{\HH_{T}}(g\oplus 0), \qquad  g\in
\ell^2(\FF_{n_1}^+\times\cdots \times \FF_{n_k}^+)\otimes \cD.
\end{equation}
For each $i\in \{1,\ldots, k\}$ and $s\in\{1,\ldots, n_i\}$, we define the operator $\GG_{i,s}:\HH_{T}\to \HH_{T}$ by relation
$$\GG_{i,s}:=\Gamma
T_{i,s}\Gamma^*,\qquad  i\in \{1,\ldots,k \}, s\in \{1,\ldots,n_i\}.
$$
Since $T\in {\bf B}_\Lambda(\cH)$, we also have $\GG\in {\bf B}_\Lambda(\cH)$.
 The next step is to show that
\begin{equation}\label{PN-intert} 
 \left( {\bf P}^{\KK_T}_{\ell^2(\FF_{n_1}^+\times\cdots \times \FF_{n_k}^+)\otimes \cD}|_{\HH_{T}}\right) \GG_{i,s}^*f=
({ S}_{i,s}^*\otimes I_\cD)\left( {\bf P}^{\KK_T}_{\ell^2(\FF_{n_1}^+\times\cdots \times \FF_{n_k}^+)\otimes
\cD}|_{\HH_{T}}\right)f
\end{equation}
for every   $i\in \{1,\ldots, k\}$, $s\in\{1,\ldots, n_i\}$, and $f\in \HH_{T}$.
Taking into account   relations \eqref{Ga} and \eqref{fi},
and   that $\Psi$ is an isometry, we obtain
\begin{equation*}
\begin{split}
{\bf P}^{\KK_T}_{\ell^2(\FF_{n_1}^+\times\cdots \times \FF_{n_k}^+)\otimes \cD} \Gamma {K_{T}^*} g&= {\bf P}^{\KK_T}_{\ell^2(\FF_{n_1}^+\times\cdots \times \FF_{n_k}^+)\otimes
\cD} {\bf P}^{\KK_T}_{\HH_{T}}(g\oplus 0)\\
&
 =
g-{\bf P}^{\KK_T}_{\ell^2(\FF_{n_1}^+\times\cdots \times \FF_{n_k}^+)\otimes \cD} \Psi \Psi^*(g\oplus 0)\\
&=g-\Theta_{T} \Theta_{T}^* g ={ K_{T}} { K_{T}^*}g
\end{split}
\end{equation*}
for all $g\in \ell^2(\FF_{n_1}^+\times\cdots \times \FF_{n_k}^+)\otimes \cD$. Since 
$\overline{\text{\rm range}\,{K_{T}^*}}=\cH$, we obtain
\begin{equation}\label{PGK} {\bf P}^{\KK_T}_{\ell^2(\FF_{n_1}^+\times\cdots \times \FF_{n_k}^+)\otimes \cD} \Gamma={ K_{T}}.
\end{equation}
On the other hand, since $T$ is a completely non-coisometric tuple, the  noncommutative Berezin kernel
${ K_{T}}$ is one-to-one. Now, relation \eqref{PGK} implies   
\begin{equation*}
 {\bf P}^{\KK_T}_{\ell^2(\FF_{n_1}^+\times\cdots \times \FF_{n_k}^+)\otimes \cD} |_{\HH_{T}}={K_{T}}
\Gamma^*
\end{equation*}
and  shows that   ${\bf P}^{\KK_T}_{\ell^2(\FF_{n_1}^+\times\cdots \times \FF_{n_k}^+)\otimes \cD} |_{\HH_{T}}$ is  a one-to-one operator acting from $\HH_{T}$ to $\ell^2(\FF_{n_1}^+\times\cdots \times \FF_{n_k}^+)\otimes
\cD$. Hence, using relation \eqref{PGK} and  Theorem \ref{Berezin}, we deduce that
\begin{equation*}
\begin{split}
\left({\bf P}^{\KK_T}_{\ell^2(\FF_{n_1}^+\times\cdots \times \FF_{n_k}^+)\otimes \cD} |_{\HH_{T}}\right)
\GG_{i,s}^*\Gamma h&= \left({\bf P}^{\KK_T}_{\ell^2(\FF_{n_1}^+\times\cdots \times \FF_{n_k}^+)\otimes \cD}
|_{\HH_{T}}\right) \Gamma T_{i,s}^* h \\
&={ K_{T}} T_{i,s}^*h\\&=
\left( { S}_{i,s}^*\otimes I_{\cD}\right) { K_{T}}h\\
&= \left( {S}_{i,s}^*\otimes I_{\cD}\right)
\left({\bf P}^{\KK_T}_{\ell^2(\FF_{n_1}^+\times\cdots \times \FF_{n_k}^+)\otimes \cD} |_{\HH_{T}}\right)\Gamma h
\end{split}
\end{equation*}
for every $i\in \{1,\ldots, k\}$, $s\in\{1,\ldots, n_i\}$, and $h\in \cH$. Therefore,    relation \eqref{PN-intert} holds.
We remark  that, since the operator ${\bf P}^{\KK_T}_{\ell^2(\FF_{n_1}^+\times\cdots \times \FF_{n_k}^+)\otimes
\cD}|_{\HH_{T}}$ is one-to-one, the
relation \eqref{PN-intert} uniquely determines each operator
$\GG_{i,s}^*$ for all $i\in \{1,\ldots, k\}$ and $s\in \{1,\ldots, n_i\}$.
The  proof is complete.
\end{proof}

Now, we show  that the  characteristic function
$\Theta_{T}$  is a complete unitary invariant for the completely non-coisometric $k$-tuples
in ${\bf B}_\Lambda(\cH)$ which admit characteristic functions.
\begin{theorem}\label{u-inv}
Let   ${T}:=(T_1,\ldots,
T_k)\in  {\bf B}_\Lambda(\cH)$ and ${T}':=(T_1',\ldots, T_k')\in
 {\bf B}_\Lambda(\cH')$ be two completely non-coisometric $k$-tuples which admit characteristic functions. Then ${T}$ and
${ T}'$ are unitarily equivalent if and only if their
characteristic functions $\Theta_{T}$ and $\Theta_{T'}$
coincide.
\end{theorem}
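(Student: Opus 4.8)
The plan is to establish both implications; the necessity is a direct transport of the noncommutative Berezin kernel, while the sufficiency is read off from the model of Theorem~\ref{model}. Throughout write $\cD:=\overline{\Delta_T(I)(\cH)}$, $\cD_*:=\overline{\Delta_{M_T}(I)(\cM_T)}$, with $\cD'$, $\cD'_*$ the analogues for $T'$, and abbreviate $\ell^2:=\ell^2(\FF_{n_1}^+\times\cdots\times\FF_{n_k}^+)$.

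\textbf{Necessity.} Suppose $U:\cH\to\cH'$ is unitary with $UT_{i,s}=T'_{i,s}U$ for all $i,s$. Conjugation by $U$ intertwines the completely positive maps $\Phi_{T_i}$ and $\Phi_{T'_i}$, so $U\Delta_T(I)U^*=\Delta_{T'}(I)$ and, by uniqueness of the positive square root, $U\Delta_T(I)^{1/2}=\Delta_{T'}(I)^{1/2}U$; hence $w:=U|_\cD$ is a unitary from $\cD$ onto $\cD'$. Feeding these relations termwise into the series defining the Berezin kernels gives $(I\otimes w)K_T=K_{T'}U$, whence
\[(I\otimes w)\left(I-K_TK_T^*\right)(I\otimes w)^*=I-K_{T'}K_{T'}^*.\]
Since $I\otimes w$ commutes with each $S_{i,s}\otimes I$, the operator $(I\otimes w)\Theta_T$ is multi-analytic, and since $\Theta_T\Theta_T^*=I-K_TK_T^*$ it satisfies $[(I\otimes w)\Theta_T][(I\otimes w)\Theta_T]^*=\Theta_{T'}\Theta_{T'}^*$. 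Applying Lemma~\ref{fifi*} to $(I\otimes w)\Theta_T$ and $\Theta_{T'}$ then yields the unitary identifications of coefficient spaces that make $\Theta_T$ and $\Theta_{T'}$ coincide.

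\textbf{Sufficiency.} Suppose $\Theta_T$ and $\Theta_{T'}$ coincide, so there are unitaries $w:\cD\to\cD'$ and $u:\cD_*\to\cD'_*$ with $\Theta_{T'}(I\otimes u)=(I\otimes w)\Theta_T$. Comparing $I-\Theta^*\Theta=D_\Theta^2$ across this relation, and using unitarity of $I\otimes u$ together with uniqueness of the positive square root, gives $(I\otimes u)D_{\Theta_T}=D_{\Theta_{T'}}(I\otimes u)$; thus $I\otimes u$ carries $\overline{D_{\Theta_T}(\ell^2\otimes\cD_*)}$ onto $\overline{D_{\Theta_{T'}}(\ell^2\otimes\cD'_*)}$. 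Consequently $W:=(I\otimes w)\oplus(I\otimes u)$ is a well-defined unitary from $\KK_T$ onto $\KK_{T'}$, block-diagonal for the direct-sum decompositions, so $W\,{\bf P}^{\KK_T}_{\ell^2\otimes\cD}={\bf P}^{\KK_{T'}}_{\ell^2\otimes\cD'}W$. Moreover $W$ sends each $\Theta_T\varphi\oplus D_{\Theta_T}\varphi$ to $\Theta_{T'}\psi\oplus D_{\Theta_{T'}}\psi$ with $\psi=(I\otimes u)\varphi$, hence carries the graph subspace defining $\HH_T$ onto the one defining $\HH_{T'}$; therefore $V:=W|_{\HH_T}$ is a unitary of $\HH_T$ onto $\HH_{T'}$. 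Writing ${\bf P}:={\bf P}^{\KK_T}_{\ell^2\otimes\cD}|_{\HH_T}$ and ${\bf P}':={\bf P}^{\KK_{T'}}_{\ell^2\otimes\cD'}|_{\HH_{T'}}$ for the one-to-one compressions of Theorem~\ref{model}, the block structure of $W$ and $W|_{\ell^2\otimes\cD}=I\otimes w$ give ${\bf P}'V=(I\otimes w){\bf P}$. Substituting this into the defining relation \eqref{PN-intert} for $\GG'$, then invoking \eqref{PN-intert} for $\GG$ and the commutation $(S_{i,s}^*\otimes I)(I\otimes w)=(I\otimes w)(S_{i,s}^*\otimes I)$, one obtains ${\bf P}'(\GG'_{i,s})^*Vf={\bf P}'V\GG^*_{i,s}f$ for every $f\in\HH_T$. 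Since ${\bf P}'$ is one-to-one, this forces $V\GG^*_{i,s}=(\GG'_{i,s})^*V$ for all $i,s$, so $V$ implements a unitary equivalence of $\GG$ and $\GG'$. As Theorem~\ref{model} gives $T\cong\GG$ and $T'\cong\GG'$, we conclude $T\cong T'$.

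The step I expect to be the main obstacle is the sufficiency direction: one must verify that the single unitary $W$ simultaneously respects the ambient direct sum of $\KK_T$, cuts out the graph subspace so as to land exactly in $\HH_{T'}$, and is compatible with the compression relation \eqref{PN-intert}. The injectivity of ${\bf P}^{\KK}_{\ell^2\otimes\cD}|_{\HH}$ furnished by Theorem~\ref{model} is precisely what upgrades the intertwining of the compressions into an intertwining of the uniquely determined operators $\GG_{i,s}$; without it one would control only the compressed model rather than the model itself.
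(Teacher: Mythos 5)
Your proposal is correct, and its two halves relate to the paper's proof differently. Throughout, write $\ell^2:=\ell^2(\FF_{n_1}^+\times\cdots\times\FF_{n_k}^+)$. The sufficiency direction is essentially the paper's own argument, step for step: the same block unitary $(I_{\ell^2}\otimes w)\oplus(I_{\ell^2}\otimes u)$ on $\KK_T$ (the paper's $U$ built from its $u$ and $u_*$), the same observation that it carries the graph subspace $\{\Theta_T\varphi\oplus D_{\Theta_T}\varphi\}$ onto the corresponding subspace for $T'$ and hence restricts to a unitary $\HH_T\to\HH_{T'}$, the same compatibility with the projections onto $\ell^2\otimes\cD$ and $\ell^2\otimes\cD'$, and the same use of the injectivity of ${\bf P}^{\KK_{T'}}_{\ell^2\otimes\cD'}|_{\HH_{T'}}$ from Theorem \ref{model} to upgrade the intertwining of compressions into $V\GG_{i,s}^*=(\GG_{i,s}')^*V$, after which Theorem \ref{model} finishes.

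The necessity direction is where you genuinely diverge, and your route is the more economical one. The paper transports the entire construction of the characteristic function under the unitary $W:\cH\to\cH'$: it verifies $(I_{\ell^2}\otimes W)\cM_T=\cM_{T'}$, shows the operators $A_{i,s}$ defining the pure tuple $M_T$ satisfy $A_{i,s}=(I_{\ell^2}\otimes W^*)A_{i,s}'(I_{\ell^2}\otimes W)$, deduces $(I_{\ell^2}\otimes W)\cD_*=\cD_*'$, and only then defines the unitaries $u:=W|_{\cD}$ and $u_*:=(I_{\ell^2}\otimes W)|_{\cD_*}$ and computes the coincidence relation directly. You bypass all of this by transporting only the Berezin kernel and invoking Lemma \ref{fifi*} applied to $(I_{\ell^2}\otimes w)\Theta_T$ and $\Theta_{T'}$ — exactly the ``essential uniqueness'' principle the paper records right after defining characteristic functions. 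The one thing this buys you less of, and the only gap in your write-up, is that Lemma \ref{fifi*} produces a \emph{partial isometry} $V:\cD_*\to\cD_*'$ whose initial and final spaces come from $\supp\bigl((I_{\ell^2}\otimes w)\Theta_T\bigr)$ and $\supp(\Theta_{T'})$, and it asserts coincidence only of the restrictions to those supports; the coincidence needed in the theorem (and consumed by your own sufficiency half) requires genuine unitaries between the full spaces $\cD_*$ and $\cD_*'$. So you must add the observation that $\supp(\Theta_T)=\ell^2\otimes\cD_*$ and likewise for $T'$. This is true and quick: $\overline{\Theta_T^*(\ell^2\otimes\cD)}=\overline{K_{M_T}\cM_T}$ since $\overline{(I-K_TK_T^*)^{1/2}(\ell^2\otimes\cD)}=\cM_T$, and because $M_T$ is pure its Berezin dilation is minimal, so the smallest reducing subspace containing $K_{M_T}\cM_T$ is all of $\ell^2\otimes\cD_*$. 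With that one line inserted, your necessity argument is complete and strictly shorter than the paper's.
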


\begin{proof} To prove the direct implication of the theorem, 
assume that  the $k$-tuples ${T}$ and ${T}'$ are unitarily
equivalent.  Let $W:\cH\to \cH'$ be a unitary operator such that
$T_{i,s}=W^*T_{i,s}'W$ for every $i\in\{1,\ldots,k\}$ and $s\in\{1,\ldots, n_i\}$.
Note   that $W{\Delta_{T}}(I)={ \Delta_{T'}}(I) W$ and $W\cD=\cD'$, where the subspaces $\cD$ and $\cD'$ are given by 
$$
\cD:=\overline{{ \Delta_{T}}(I)(\cH)},\quad
 \quad \cD':=\overline{{\Delta_{T'}}(I)(\cH')}.
$$
 On the other hand, using the definition of the noncommutative Berezin kernel
  associated with $\Lambda$-polyballs, it is easy to see  that $(I_{\ell^2(\FF_{n_1}^+\times\cdots \times \FF_{n_k}^+)}\otimes W)K_{T}= K_{T'}W$.  Consequently,
\begin{equation*}
 (I_{\ell^2(\FF_{n_1}^+\times\cdots \times \FF_{n_k}^+)}\otimes W)(I- {K_{T}}{ K_{T}^*}) (I_{\ell^2(\FF_{n_1}^+\times\cdots \times \FF_{n_k}^+)}\otimes W)=I- { K_{T'}}{K_{T'}^*}
\end{equation*}
 and $(I_{\ell^2(\FF_{n_1}^+\times\cdots \times \FF_{n_k}^+)}\otimes W)\cM_{T}= \cM_{T'}$,
 where
 $
\cM_{T}:= \overline{{\rm range}\,(I -{K_{T}}{K_{T}^*}) }
$
and $\cM_{T'}:=\overline{{\rm range}\,(I -{K_{T'}}{K_{T'}^*}) }$. As mentioned in the remarks preceding   Theorem \ref{pure-model},    ${M_T}:=(M_1,\ldots, M_k)\in {\bf B}_\Lambda(\cM_T)$ is
 the $k$-tuple with $M_i:=(M_{i,1},\ldots, M_{i,n_i})$ and
  $M_{i,s}\in B(\cM_{T})$, where  $M_{i,s}:=A_{i,s}^*$ and  $A_{i,s}\in B(\cM_{ T})$ is uniquely defined by relation 
$$
A_{i,s}\left[(I -{ K_{T}}{K_{T}^*})^{1/2}x\right]:=(I -{ K_{T}}{ K_{T}^*})^{1/2}({ S}_{i,s}\otimes I)x
$$
for all $x\in \ell^2(\FF_{n_1}^+\times\cdots \times \FF_{n_k}^+)\otimes \overline{{ \Delta_{T}}(I)(\cH)}$. In a similar manner, we define the $k$-tuple ${ M_{T'}}\in {\bf B}_\Lambda(\cM_{T'})$ and the operators $A_{i,s}'\in B(\cM_{T'})$.
It is easy to see that
\begin{equation*}
\begin{split}
A_{i,s} (I -{K_{T}}{ K_{T}^*})^{1/2}f
&=(I_{\ell^2(\FF_{n_1}^+\times\cdots \times \FF_{n_k}^+)}\otimes W^*) A_{i,s}'(I -{ K_{T'}}{K_{T'}^*})^{1/2}(I_{\ell^2(\FF_{n_1}^+\times\cdots \times \FF_{n_k}^+)}\otimes W^*)f\\
&=(I_{\ell^2(\FF_{n_1}^+\times\cdots \times \FF_{n_k}^+)}\otimes W^*) A_{i,s}' (I_{\ell^2(\FF_{n_1}^+\times\cdots \times \FF_{n_k}^+)}\otimes W)(I -{K_{T}}{ K_{T}^*})^{1/2}f
\end{split}
\end{equation*}
 for all $f\in \ell^2(\FF_{n_1}^+\times\cdots \times \FF_{n_k}^+)\otimes \overline{{\Delta_{T}}(I)(\cH)}$.
This  implies
 $$
 A_{i,s}=(I_{\ell^2(\FF_{n_1}^+\times\cdots \times \FF_{n_k}^+)}\otimes W^*) A_{i,s}' (I_{\ell^2(\FF_{n_1}^+\times\cdots \times \FF_{n_k}^+)}\otimes W).
$$
 It is clear now that  $(I_{\ell^2(\FF_{n_1}^+\times\cdots \times \FF_{n_k}^+)}\otimes W)\cD_*=\cD_*'$, where
 $\cD_*:=\overline{{\Delta_{M_T}}(I)(\cM_T)}$ and $\cD_*':=\overline{{\Delta_{M_{T'}}}(I)(\cM_{T'})}$.
 Define the
unitary operators $u$ and $u'$ by setting
$$u:=W|_\cD:\cD\to \cD' \ \text{ and }\
u_*:=(I_{\ell^2(\FF_{n_1}^+\times\cdots \times \FF_{n_k}^+)}\otimes W)|_{\cD_*}:\cD_*\to {\cD_*'}.
$$
Straightforward calculations reveal that 
$$
(I_{\ell^2(\FF_{n_1}^+\times\cdots \times \FF_{n_k}^+)}\otimes
u)\Theta_{T}=\Theta_{T'}(I_{\ell^2(\FF_{n_1}^+\times\cdots \times \FF_{n_k}^+)}\otimes u_*),
$$
which proves the direct implication of the theorem.
Conversely, assume that the  characteristic functions  of
${T}$ and ${T'}$ coincide.  In this case,   there exist unitary operators
$u:\cD\to \cD'$ and $u_*:\cD_*\to
\cD_*'$ such that
\begin{equation*}
(I_{\ell^2(\FF_{n_1}^+\times\cdots \times \FF_{n_k}^+)}\otimes u)\Theta_{T}
=\Theta_{T'}(I_{\ell^2(\FF_{n_1}^+\times\cdots \times \FF_{n_k}^+)}\otimes
u_*).
\end{equation*}
Hence, we deduce that
$$
D_{\Theta_{T}}=\left(I_{\ell^2(\FF_{n_1}^+\times\cdots \times \FF_{n_k}^+)}\otimes u_*\right)^*
D_{\Theta_{T'}}\left(I_{\ell^2(\FF_{n_1}^+\times\cdots \times \FF_{n_k}^+)}\otimes u_*\right)
$$
and
$$
\left(I_{\ell^2(\FF_{n_1}^+\times\cdots \times \FF_{n_k}^+)}\otimes
u_*\right)\overline{D_{\Theta_{T}}(\ell^2(\FF_{n_1}^+\times\cdots \times \FF_{n_k}^+)\otimes
\cD_*)}= \overline{D_{\Theta_{T'}}(\ell^2(\FF_{n_1}^+\times\cdots \times \FF_{n_k}^+)\otimes
\cD_*')},
$$
where $D_{\Theta_{T}}:=(I-\Theta_T^*\Theta_T)^{1/2}$.
 Define now the unitary operator $U:\cK_{T}\to \cK_{T'}$
by setting
$$U:=(I_{\ell^2(\FF_{n_1}^+\times\cdots \times \FF_{n_k}^+)}\otimes u)\oplus (I_{\ell^2(\FF_{n_1}^+\times\cdots \times \FF_{n_k}^+)}\otimes u_*).
$$
It is easy to see  that the operator $\Psi:\ell^2(\FF_{n_1}^+\times\cdots \times \FF_{n_k}^+)\otimes
\cD_*\to \KK_{T}$, defined  by $$ \Psi\varphi:=
\Theta_{T}\varphi \oplus D_{\Theta_{T}}\varphi,\quad
\varphi\in \ell^2(\FF_{n_1}^+\times\cdots \times \FF_{n_k}^+)\otimes \cD_*, $$
 and the corresponding operator $\Psi'$ satisfy the   relations
\begin{equation}
\label{Uni1} U \Psi\left(I_{\ell^2(\FF_{n_1}^+\times\cdots \times \FF_{n_k}^+)}\otimes u_*\right)^*=\Psi'
\end{equation}
and
\begin{equation}
\label{Uni2} \left(I_{\ell^2(\FF_{n_1}^+\times\cdots \times \FF_{n_k}^+)}\otimes u\right)
 P_{\ell^2(\FF_{n_1}^+\times\cdots \times \FF_{n_k}^+)\otimes
\cD}^{\KK_{T}} U^*=P_{\ell^2(\FF_{n_1}^+\times\cdots \times \FF_{n_k}^+)\otimes
\cD'}^{\KK_{T'}},
\end{equation}
where $P_{\ell^2(\FF_{n_1}^+\times\cdots \times \FF_{n_k}^+)\otimes \cD}^{\KK_{T}}$ is the orthogonal
projection of $\KK_{T}$ onto $\ell^2(\FF_{n_1}^+\times\cdots \times \FF_{n_k}^+)\otimes \cD$. On the other hand  relation \eqref{Uni1} implies
\begin{equation*}
\begin{split}
U\HH_{T}&=U\KK_{T}\ominus U\Psi(\ell^2(\FF_{n_1}^+\times\cdots \times \FF_{n_k}^+)\otimes \cD_*)\\
&=\KK_{T'}\ominus \Psi'(I_{\ell^2(\FF_{n_1}^+\times\cdots \times \FF_{n_k}^+)}\otimes u_*)(\ell^2(\FF_{n_1}^+\times\cdots \times \FF_{n_k}^+)\otimes
 \cD_*)\\
&=\KK_{T'}\ominus \Psi' (\ell^2(\FF_{n_1}^+\times\cdots \times \FF_{n_k}^+)\otimes \cD_*').
\end{split}
\end{equation*}
Consequently,  $U|_{\HH_{T}}:\HH_{T}\to
\HH_{T'}$ is  a unitary operator.
We remark  that
\begin{equation}
\label{intertw} ({S}_{i,s}^*\otimes I_{\cD'})(I_{\ell^2(\FF_{n_1}^+\times\cdots \times \FF_{n_k}^+)}\otimes
u)= (I_{\ell^2(\FF_{n_1}^+\times\cdots \times \FF_{n_k}^+)}\otimes u)({S}_{i,s}^*\otimes I_{\cD})
\end{equation}
for every $i\in \{1,\ldots, k\}$ and  $s\in \{1,\ldots, n_i\}$.
Let $\GG:=(\GG_1,\ldots, \GG_n)$ and $\GG':=(\GG_1',\ldots, \GG_n')$
be the model operators provided by Theorem \ref{model}  for ${T}$ and
${T}'$, respectively. Taking into account  relations \eqref{Uni2}, \eqref{intertw}, and relation  \eqref{PN-intert}  for ${T}'$
and ${T}$,  respectively, we deduce that
\begin{equation*}
\begin{split}
P_{\ell^2(\FF_{n_1}^+\times\cdots \times \FF_{n_k}^+)\otimes \cD'}^{\KK_{T'}}{\GG_{i,s}'}^*Uf
&=  ({ S}_{i,s}^*\otimes I_{\cD'}) P_{\ell^2(\FF_{n_1}^+\times\cdots \times \FF_{n_k}^+)\otimes
\cD}^{\KK_{T}}Ux\\
&=({S}_{i,s}^*\otimes I_{\cD'})(I_{\ell^2(\FF_{n_1}^+\times\cdots \times \FF_{n_k}^+)}\otimes u)
P_{\ell^2(\FF_{n_1}^+\times\cdots \times \FF_{n_k}^+)\otimes \cD}^{\KK_{T}}f\\
&=(I_{\ell^2(\FF_{n_1}^+\times\cdots \times \FF_{n_k}^+)}\otimes u)({S}_{i,s}^*\otimes I_{\cD})
P_{\ell^2(\FF_{n_1}^+\times\cdots \times \FF_{n_k}^+)\otimes \cD}^{\KK_{T}}f\\
&=(I_{\ell^2(\FF_{n_1}^+\times\cdots \times \FF_{n_k}^+)}\otimes u) P_{\ell^2(\FF_{n_1}^+\times\cdots \times \FF_{n_k}^+)\otimes \cD}^{\KK_{T}}
\GG_i^*f\\
&= P_{\ell^2(\FF_{n_1}^+\times\cdots \times \FF_{n_k}^+)\otimes \cD'}^{\KK_{T'}}U \GG_{i,s}^*f
\end{split}
\end{equation*}
for every  $f\in \HH_T$,  $i=\{1,\ldots, k\}$,  and $s\in\{1,\ldots, n_i\}$. According to Theorem \ref{model},
$P_{\ell^2(\FF_{n_1}^+\times\cdots \times \FF_{n_k}^+)\otimes \cD'}^{\KK_{T'}}|_{\HH_{T'}}$ is a
one-to-one operator. Consequently, the relations above imply
$
\left(U|_{\HH_{T}}\right)
\GG_{i,s}^*=({\GG_{i,s}'})^*\left(U|_{\HH_{T}}\right)
$
for every $i\in \{1,\ldots, k\}$ and $s\in \{1,\ldots, n_i\}$.
 Using   Theorem \ref{model}, we conclude that the $k$-tuples
  ${T}$ and ${T}'$ are
 unitarily equivalent. This   completes the proof.
\end{proof}

\begin{corollary}
If  ${T}:=(T_1,\ldots,
T_k)\in  {\bf B}_\Lambda(\cH)$, then $T$ is unitarily equivalent to $({\bf S}_1\otimes I_\cD,\ldots, {\bf S}_k\otimes I_\cD)$ for some Hilbert space $\cD$ if and only if $T$ is completely non-coisometric  and has characteristic function $\Theta_T=0$.
\end{corollary}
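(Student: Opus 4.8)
The plan is to reduce everything to the single identity $K_{T}K_{T}^{*}+\Theta_{T}\Theta_{T}^{*}=I$ coming from the definition of the characteristic function, together with the intertwining relations of Theorem~\ref{Berezin}. The crux is that $\Theta_{T}=0$ is equivalent to $K_{T}K_{T}^{*}=I$, and that for a completely non-coisometric tuple this forces the noncommutative Berezin kernel $K_{T}$ to be a \emph{unitary}, which is precisely the situation in which $T$ is a full copy of the universal model. Both implications will be drawn from this observation.

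For the ``if'' direction I would assume $T$ is completely non-coisometric and admits characteristic function $\Theta_{T}=0$. Substituting into $K_{T}K_{T}^{*}+\Theta_{T}\Theta_{T}^{*}=I$ gives $K_{T}K_{T}^{*}=I$, so $K_{T}$ is surjective onto $\ell^{2}(\FF_{n_1}^+\times\cdots \times \FF_{n_k}^+)\otimes\cD(T)$. Since $T$ is completely non-coisometric, Theorem~\ref{Berezin}(i) shows that $K_{T}^{*}K_{T}$ is one-to-one (the increasing limit defining it annihilates no nonzero vector), whence $\ker K_{T}=\ker(K_{T}^{*}K_{T})=\{0\}$ and $K_{T}$ is injective. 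A bounded bijection is invertible, and $K_{T}K_{T}^{*}=I$ then yields $K_{T}^{*}=K_{T}^{-1}$, so $K_{T}$ is unitary. Feeding this into the relation $K_{T}T_{i,s}^{*}=(S_{i,s}^{*}\otimes I_{\cD(T)})K_{T}$ of Theorem~\ref{Berezin}(ii) and taking adjoints gives $T_{i,s}=K_{T}^{*}(S_{i,s}\otimes I_{\cD(T)})K_{T}$ for all $i,s$, i.e.\ $T$ is unitarily equivalent to $({\bf S}_{1}\otimes I_{\cD(T)},\ldots,{\bf S}_{k}\otimes I_{\cD(T)})$ with $\cD:=\cD(T)=\overline{\Delta_{T}(I)\cH}$.

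For the ``only if'' direction, suppose $T$ is unitarily equivalent to $({\bf S}_{1}\otimes I_{\cD},\ldots,{\bf S}_{k}\otimes I_{\cD})$. Because each ${\bf S}_{i}$ is a pure row isometry, the tuple ${\bf S}\otimes I_{\cD}$ is pure, hence completely non-coisometric, and this property is invariant under unitary equivalence, so $T$ is completely non-coisometric. Next I would invoke the identity $\Delta_{{\bf S}}(I)={\bf P}_{\CC}$ established in the proof of Lemma~\ref{fifi*}: a direct computation from the definition of $K_{{\bf S}}$ then shows $K_{{\bf S}}$ is unitary, so $K_{{\bf S}\otimes I_{\cD}}=K_{{\bf S}}\otimes I_{\cD}$ satisfies $K_{{\bf S}\otimes I_{\cD}}K_{{\bf S}\otimes I_{\cD}}^{*}=I$. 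Since the Berezin kernels of unitarily equivalent tuples are themselves unitarily equivalent (as in the proof of Theorem~\ref{u-inv}), the operator $I-K_{T}K_{T}^{*}$ is unitarily equivalent to $I-K_{{\bf S}\otimes I_{\cD}}K_{{\bf S}\otimes I_{\cD}}^{*}=0$, hence $K_{T}K_{T}^{*}=I$. Now $\Delta_{{\bf S}\otimes I}(I-K_{T}K_{T}^{*})=\Delta_{{\bf S}\otimes I}(0)=0\geq0$, so Theorem~\ref{charact} guarantees that $T$ admits a characteristic function, and from $\Theta_{T}\Theta_{T}^{*}=I-K_{T}K_{T}^{*}=0$ we conclude $\Theta_{T}=0$.

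The routine portions are the adjoint manipulations and the transport of $I-K_{T}K_{T}^{*}$ under unitary equivalence. The step I expect to require the most care is showing $K_{T}$ is genuinely unitary in the ``if'' direction: surjectivity comes for free from $\Theta_{T}=0$, but injectivity must be extracted from complete non-coisometry via the one-to-oneness of $K_{T}^{*}K_{T}$ in Theorem~\ref{Berezin}(i), and only the combination of the two yields invertibility, and hence unitarity, of $K_{T}$.
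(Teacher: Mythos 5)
Your proposal is correct and, since the paper states this corollary with no printed proof, it assembles exactly the ingredients the paper intends: the defining identity $K_TK_T^*+\Theta_T\Theta_T^*=I$ reduces $\Theta_T=0$ to $K_TK_T^*=I$; complete non-coisometry gives injectivity of $K_T$ via Theorem \ref{Berezin}(i), hence $K_T$ is unitary and the intertwining of Theorem \ref{Berezin}(ii) makes $T$ unitarily equivalent to ${\bf S}\otimes I_{\cD(T)}$; conversely, the Berezin kernel of ${\bf S}\otimes I_{\cD}$ is unitary, the kernel transport from the proof of Theorem \ref{u-inv} gives $I-K_TK_T^*=0$, and Theorem \ref{charact} together with the defining identity forces $\Theta_T=0$, with complete non-coisometry following from purity. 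The only stylistic difference is that in the ``if'' direction you re-derive the unitarity of $K_T$ by hand, whereas one could quote Theorem \ref{pure-model} directly: $\Theta_T=0$ is trivially an inner multi-analytic operator with zero range, so that theorem gives purity of $T$ and unitary equivalence to the compression to $H_T=\left(\ell^2(\FF_{n_1}^+\times\cdots\times\FF_{n_k}^+)\otimes\overline{\Delta_T(I)(\cH)}\right)\ominus{\rm range}\,\Theta_T$, which is the whole space, i.e., to ${\bf S}\otimes I_{\cD}$ itself.
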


      \bigskip

       %

      \end{document}